\DeclareMathAlphabet{\mathpzc}{OT1}{pzc}{m}{it}
\newcommand{\that}{\ |\ }
\newcommand{\R}{\mathbb{R}}
\newcommand{\wt}[1]{\bm{#1}}
\newcommand{\T}{\mathrm{T}}
\newcommand{\lra}{\longrightarrow}
\newcommand{\ra}{\rightarrow}
\newcommand{\eps}{\varepsilon}
\newcommand{\dd}{\mathrm{d}}
\newcommand{\pa}{\partial}
\newcommand{\image}{\operatorname{Im}}
\def\<#1>{\big\langle #1\big\rangle}
\def\(#1){\left( #1\right)}
\newcommand{\vv}{\bm{v}}
\newcommand{\covector}{\bm{\psi}}
\newcommand{\zero}{\bm{0}}
\newcommand{\G}[1]{\mathcal{Q}_{[#1]}}
\newcommand{\cost}[2]{Q_{#1}[#2]} 
\newcommand{\blincost}[3]{B_{#1}[#2,#3]} 
\newcommand{\costmin}[2]{\widehat{Q}_{#1}(#2)} 
\newcommand{\RR}{\bm{\mathcal{R}}_{(\Lambda_1)}} 
\newcommand{\qq}[1]{\bm{q}^{(#1)}} 
\newcommand{\q}[1]{q^{(#1)}}  
\newcommand{\dotq}[1]{{\dot{q}}^{(#1)}} 
\newcommand{\lin}[1]{\Phi^{(#1)}} 
\newcommand{\dotlin}[1]{{\dot\Phi}^{(#1)}}
\newcommand{\Energy}[1]{E^{[#1]}} 
\newcommand{\Der}{\operatorname{D}} 
\newcommand{\cc}[1]{c^{(#1)}} 
\newcommand{\bbb}[1]{\bm{b}^{(#1)}} 
\newcommand{\bb}[1]{b^{(#1)}} 
\newcommand{\LL}[2]{L^\infty([#1,#2],\R^k)} 
\newcommand{\Ll}[2]{L^2([#1,#2],\R^k)} 
\newcommand{\End}[2]{\mathrm{End}^{[#1]}_{#2}} 
\newcommand{\END}[2]{{\mathbf{End}}^{[#1]}_{#2}} 
\newcommand{\Eng}{\mathcal{E}} 
\newcommand{\D}{\mathcal{D}} 
\newcommand{\U}[1]{\mathcal{U}_{#1}} 
\newcommand{\Sol}[2]{\operatorname{Sol}_{#1}{#2}} 
\newcommand{\FF}{\bm\Psi_2}
\newcommand{\PP}{\bm\Phi_2}
\newcommand{\Pp}{\Phi_2}
\newcommand{\dotPp}{\dot{\Phi}_2}
\newcommand{\Y}[2]{Y_{#1}^{(#2)}} 
\newcommand{\du}{\Delta u} 
\newcommand{\dv}{\Delta v}
\numberwithin{equation}{section} 
\theoremstyle{plain} 
\newtheorem{theorem}{Theorem}[section]
\newtheorem{proposition}[theorem]{Proposition}
\newtheorem{lemma}[theorem]{Lemma}
\newtheorem{corollary}[theorem]{Corollary}
\theoremstyle{definition}
\newtheorem{definition}[theorem]{Definition}
\theoremstyle{remark}
\newtheorem{remark}[theorem]{Remark}
\title{New second-order optimality conditions in sub-Riemannian Geometry}
\author{Michał Jóźwikowski\footnote{Part of this research was conducted during the employment of MJ at the University of Fribourg, finaced by the ERC Starting Grant \emph{Geometry of Metric Groups}, grant agreement 713998 GeoMeG. } \\[0.5cm]
Faculty of Mathematics, Informatics and Mechanics\\
University of Warsaw\\[2ex]
 Department of Mathematics\\
University of Fribourg
}
\date{\today}
\begin{document}
\maketitle
\begin{abstract}
We study the geometry of the second-order expansion of the extended end-point map for the sub-Riemannian geodesic problem. Translating the geometric reality into equations we derive new second-order necessary optimality conditions in sub-Riemannian Geometry. In particular, we find an ODE for velocity of an abnormal sub-Riemannian geodesics. 
It allows to divide abnormal minimizers into two classes, which we propose to call 2-normal and 2-abnormal extremals. In the 2-normal case the above ODE completely determines the velocity of a curve, while in the 2-abnormal case the velocity is undetermined at some, or at all points. With some enhancement of the presented results it should be possible to prove the regularity of all 2-normal extremals (the 2-abnormal case seems to require study of higher-order conditions) thus making a step towards solving the problem of smoothness of sub-Riemannian abnormal geodesics.

As a by-product we present a new derivation of Goh conditions.  We also prove that the assumptions weaker than these used in [Boarotto, Monti, Palmurella, 2020] to derive third-order Goh conditions, imply piece-wise-$C^2$ regularity of an abnormal extremal.
\end{abstract}
\begin{flushright}
\emph{Martynie i Zuzi}
\end{flushright}

\section{Introduction}

\paragraph{The problem of smoothness of abnormal geodesics.}
The question weather all minimizing sub-Riemannian geodesics are smooth dates back to the famous example of Montgomery \cite{Montgomery_example_1994} of a minimizing abnormal geodesics. It is commonly agreed \cite{Agrachev_open_problems_2014, Montgomery_2006} to be one of the main open problem in sub-Riemannian geometry. Despite 30 years of efforts, there are not many general results. Among the latter we must mention the derivation of the second-order necessary conditions for optimality \cite{Agrachev_Sarychev_1996} (known as \emph{Goh conditions}) and a technical masterpiece of Hakavouri and Le Donne \cite{no_corners_2016} (se also \cite{Hakavuori_Donne_2018}) which ruled out corner-like singularities (some previous results in this direction are \cite{Leonardi_Monti_2008,Monti_regularity_results_2014}). We are also aware of a very recent attempts to derive third-order (and higher) necessary conditions of optimality \cite{Monti_third_order_2020, higher_Goh_2022}. We refer to \cite{Monti_regularity_2014} for a discussion of the problem and its history. 

\paragraph{Basic ideas.}
In this paper we propose the following approach to the problem. Consider a sub-Riemannian manifold $(M,\D,\<\cdot,\cdot>)$ and the problem of minimizing the energy by a $\D$-horizontal curve $q(t)$, with $t\in[t_0,t_1]$, joining given two points $q_0=q(t_0)$ and $q_1=q(t_1)$ on $M$. We may (locally) reformulate this problem as an optimal control problem in a standard way -- see page \pageref{eqn:trajectory_u}. Now let
$$\END{t_0,t}{}:\LL{t_0}{t}\lra M\times\R\ ;\qquad \END{t_0,t}{}[u]=(q(t),\Energy{t_0,t}[u]) $$
be the extended end-point map at time $t\in[t_0,t_1]$ related with this optimal control problem.  Here  $q(t)$ is the sub-Riemannian trajectory corresponding to the control $u\in\LL{t_0}{t_1}$ and $\Energy{t_0,t}[u]$ is the energy of this trajectory at time $t$. 

In the next step, for a  distinguished trajectory of the system  $q(t)\in M$ corresponding to a given control $u\in \LL{t_0}{t}$, we study curves $s\mapsto \END{t_0,t}{}[u+s\cdot \du]$ built for all possible controls $\du\in \LL{t_0}{t}$. More precisely, we are interested in 2-jets of such curves at $s=0$, i.e. 
$$\END{t_0,t}{}[u+s\cdot \du]\overset{\text{loc.}}=:\END{t_0,t}{}[u]+ s\cdot \bbb{1}(t,\du)+ {s^2}\cdot \bbb{2}(t,\du)+o(s^2)\ . $$
One may think of the collection of pairs $(\bbb{1}(t,\du),\bbb{2}(t,\du))$ (for all possible $\du$'s) as of quadratic approximations of the image of the end-point map around $\END{t_0,t}{}[u]$.\smallskip

It turns out that, for a given $\du\in\LL{t_0}{t_1}$, curves $\bbb{1}(t,\du)$ and $\bbb{2}(t,\du)$ satisfy  quite a complicated system of ODEs \eqref{eqn:b1}-\eqref{eqn:b2}. It is convenient to treat the latter as a control system (build on top of a fixed trajectory $q(t)$ of the initial sub-Riemannian control system) with $\du$ playing the role of the control.  \medskip

The key point now is to relate the properties of this new controls system with optimality of a sub-Riemannian trajectory $q(t)$. To do this we rely on two results:
\begin{itemize}
    \item Firstly, the classical theory of Agrachev and Sarychev \cite{Agrachev_Sarychev_1996} gives information about the space of curves $\bbb{2}(t,\du)$ for these controls $\du\in\LL{t_0}{t_1}$ which satisfy $\bbb{1}(t,\du)=\zero$.
    \item Secondly, thanks to the results of \cite{MJ_BS_adapted}, we may change coordinates transforming the pairs $(\bbb{1}(t,\du),\bbb{2}(t,\du))$ into pairs $(\qq{1}(t,\du),\qq{2}(t,\du))\in \R^{n+1}\times\R^{n+1}$, for which the new control system we want to analyze takes a much simpler form \eqref{eqn:ev_q1}--\eqref{eqn:ev_q2}.  
\end{itemize}
Using the above we are able to relate the geometry of the sets 
$$ \{(\bbb{1}(t,\du),\bbb{2}(t,\du))\that \du \in \LL{t_0}{t_1}\}\quad\text{where $t\in[t_0,t_1]$}$$ 
with the optimality of the sub-Riemannian trajectory $q(t)$.

\paragraph{The main result.}
 Leaving aside some technical complications (we need to divide the curve $q(t)$ into  pieces by excluding a finite number of points in the time interval $[t_0,t_1]$) it turns out that, if the curve $q(t)$ is an abnormal minimizing geodesics, then there exists a \emph{Pontryagin covector} $\wt\varphi(t)=(\varphi(t),0)\in \T_{(q(t),\Energy{t_0,t}[u])}^\ast (M\times\R)$ such that for each $t\in[t_0,t_1]$,  the set
$$S_t:=\{(\bbb{1}(t,\du),\<\bbb{2}(t,\du),\wt \varphi(t)>)\that \du\in \LL{t_0}{t_1}\}\subset \T_{(q(t),\Energy{t_0,t}[u])}(M\times\R)\times\R$$
is bounded by a graph of a quadratic map $\PP(t):\T_{(q(t),\Energy{t_0,t}[u])}(M\times\R)\times\T_{(q(t),\Energy{t_0,t}[u])}(M\times\R)\ra\R$ (see the figure below).

\begin{center}
\includegraphics[scale=0.7]{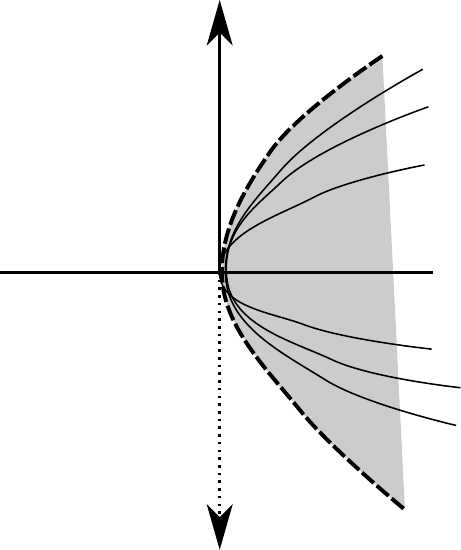}
\put(-270,-20){For a minimizing curve the second-order approximations of the image of the extended end-point}
\put(-270,-35){map (grey area)  must be  bounded by a graph of a quadratic function (dashed line). Solid lines re-} 
\put(-270,-50){present different quadratic approximations of the image of the extended end-point map $\END{t_0,t}{}$.}
\put(-140,100){$\END{t_0,t}{}[u]$}
\put(0,160){$\END{t_0,t}{}[u]+s\cdot \bbb{1}(t)+\frac{s^2}2 \cdot \bbb{2}(t)$}
\put(-75,170){$\R$-costs}
\put(-10,100){$M$-positions}
\end{center} 

Moreover, by taking this bounding map biggest possible, we may guarantee that $\PP(t)$ has properties of the value function in the sense of Bellman. In particular, it turns out that for every control $\du\in\LL{t_0}{t_1}$ the function
$$t\longmapsto \<\bbb{2}(t,\du),\wt \varphi(t)>-\PP(t)[\bbb{1}(t,\du),\bbb{1}(t,\du)]$$
is non-decreasing. The above properties are stated as Thm~\ref{thm:optimal_deg_2}, which is the main result of this paper.\smallskip

\paragraph{Discussion of the main result.}
By differentiation of the latter function with respect to $t$ (we were able to show that $\PP(t)$ is differentiable almost everywhere) we prove Theorem~\ref{thm:equations_of_motion}, obtaining a set of ODEs \eqref{eqn:xi_u_1}--\eqref{eqn:Phi_b_1} and an inequality \eqref{eqn:2_form_positive} that an abnormal minimizing SR geodesic should satisfy.

The derivation of these equations is, in our opinion, the most significant consequence of Thm~\ref{thm:optimal_deg_2}. (The well-known \emph{Goh condition} is a simple consequence of these equations -- see page~\pageref{par:Goh}.) The first of these equations is of the form
\begin{equation}
\label{eqn:main}
\<X_i\big|_{q(t)},\xi(t)>+2\ a_2(t)\cdot u_i(t)=0\ .
\end{equation}
It describes the controls $u_i(t)$ of the SR geodesic $q(t)$ in terms of vector fields $X_i$ spanning the distribution $\D$, and two functions $\xi(t)$, $a_2(t)$ related with the quadratic map $\PP(t)$. Note that if $a_2(t)\neq 0$ we may explicitly derive $u_i(t)$ as a function of other quantities. This observation is a basis of a distinction of abnormal extremals into sub-classes of \emph{2-normal} and \emph{2-abnormal extremals} proposed by us in Definition~\ref{def:2_normal_abnormal}. The former class is defined by the property that  $a_2(t)\neq 0$ for every $t\in[t_0,t_1]$, hence the controls $u_i$ are solutions of algebraic equations. Geometrically the 2-normal case corresponds to a situation when for every $t$ the graph of $\PP(t)$ strictly separates the cost direction from the set $S_t$. By contrast, in the 2-abnormal case the controls $u_i(t)$ are undetermined at some times $t\in[t_0,t_1]$ and the above separation is not strict. Schematically the difference is depicted on the figure below. 
\begin{center}
\includegraphics[scale=0.6]{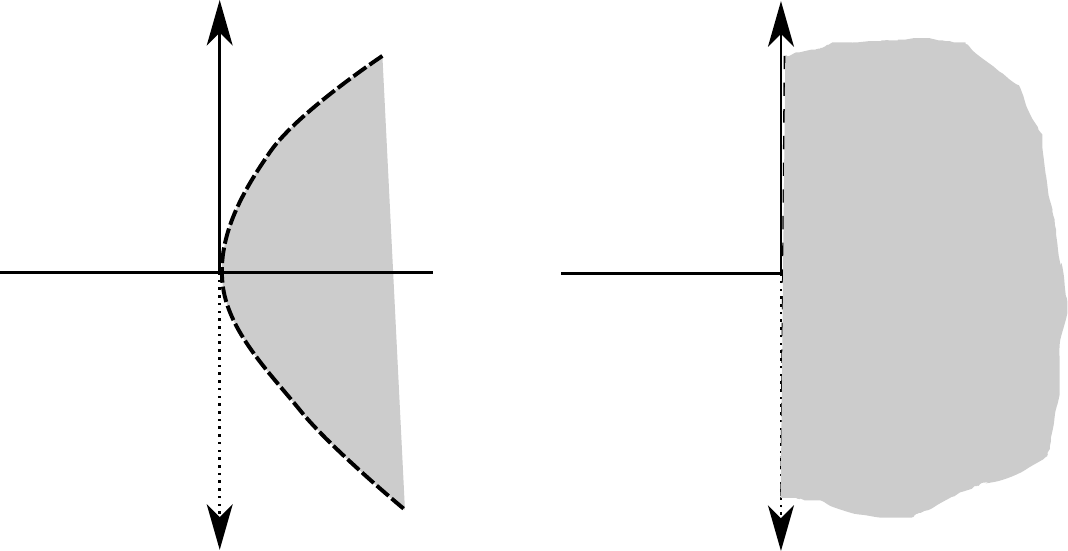}
\put(-75,150){$\R$}
\put(-235,150){$\R$}
\put(-195,130){$a_2(t)>0$}
\put(-55,130){$a_2(t)=0$}
\put(-300,-20){In the 2-normal case (left), the minimal 2-jet is separated from the cost direction $\R$,}
\put(-300,-35){while in the 2-abnormal case (right) there is no separation.}\label{fig:normal_abnormal}
\end{center}

The presence of equation \eqref{eqn:main} is very important from the point of view of the \emph{regularity problem of SR geodesics}, at least in the 2-normal case. Since, the controls $u_i(t)$, and the maps $\xi(t)$ and $a_2(t)$ are algebraically related, it is clear that the regularity problem is linked with the $t$-regularity of maps $\xi(t)$ and $a_2(t)$ or, more generally, the whole quadratic map $\PP(t)$. Unfortunately, in this paper we were only able to prove that these objects are of bounded variation, which is not enough for our purposes. However, I hope that other researchers, perhaps more experienced in optimal control theory, could help improving the regularity results. 

Let us note that the regularity of 2-abnormal extremals seems to be a much more difficult task requiring, in my opinion, study of third- or higher-order expansions of the extended end-point map. 

\paragraph{Supplementary results.}
The question whether the set $S_t$ is bounded is closely related with a special case of a linear-quadratic optimal control problem (we speak about a \emph{characteristic optimal control problem} of the SR trajectory $q(t)$). In our main result -- Thm~\ref{thm:optimal_deg_2} -- we used only the existence of a bound (and the fact that it is actually a graph of a quadratic map). An interesting problem on its own is the question whether the boundary points on $S_t$ are realizable by some controls from $\LL{t_0}{t}$, or to put it differently, does the characteristic optimal control problem has solutions. In general, this may not be the case as examples studied by us in Subsection~\ref{ssec:examples} show. However, when the solutions exists they can provide a lot of information about the SR trajectory $q(t)$, in particular prove its regularity. We discuss this topic in detail in Section~\ref{sec:suplementary}, providing both algebraic and differential (by means of the Pontryagin Maximum Principle) criteria for the existence of solutions of the characteristic  optimal control problem.  

In particular, we prove Lemma~\ref{lem:monti} in which we show that assumptions weaker then these of \cite[Thm 1.2]{Monti_third_order_2020} imply that if the abnormal minimizing SR geodesic $q(t)$ is a 2-normal extremal in the sense of Definition~\ref{def:2_normal_abnormal}, it must be piece-wise $C^2$ with a finite number of pieces.

\paragraph{Content of this paper.}
In the preliminary  Section~\ref{sec:prem} we state the sub-Riemannian geodesic problem and reformulate it in a control-theoretic setting. We also give a brief introduction to jets, define the extended end-point map, and recall first order optimality conditions (for normal and abnormal extremals).

In Section~\ref{sec:result} we state our main results -- Theorems~\ref{thm:optimal_deg_2} and \ref{thm:equations_of_motion} (linking them with the Agrachev-Sarychev theory). Later we discuss these results, in particular, addressing the regularity problem, Goh conditions, and introducing the notions of 2-normal and 2-abnormal curves. We also study two examples -- the Martinet system and a corner curve. 

Section~\ref{sec:proof} contains the proof of Theorem~\ref{thm:optimal_deg_2}. We begin it by some technical preparations in Subsection~\ref{ssec:char_ocp}, including construction of, so called, \emph{adapted coordinates}, and defining the \emph{characteristic control system} and the \emph{characteristic optimal control problem} related with a SR trajectory. The actual proof, contained in Subsection~\ref{ssec:prop_min_2_jet} is rather long, technical, and requires a few separate steps. 

In Section~\ref{sec:suplementary} we study additional information input on a SR geodesic if we can assure the existence of solutions of the characteristic optimal control problem. In particular we get some regularity results under this assumption.

We end by stating a few hypothesis in Section~\ref{sec:conclusion}.

\section{Preliminaries}\label{sec:prem}

\paragraph{Regularity of curves.} Throughout this work we will talk about curves on smooth manifolds of various classes of regularity. (Some emphasis will be put on improving the regularity of a curve by means of a bootstrap technique.) At the bottom level we have
$L^\infty$-curves, i.e.  \emph{measurable and essentially bounded}. A step higher are \emph{ACB} curves, i.e. \emph{absolutely continuous with $L^{\infty}$ derivatives}. The relation within these two is that if $\gamma(t)$ satisfies an ODE
$\dot \gamma(t)\overset{\text{a.e.}}= f(t)$, where $f(t)$ is of class $L^\infty$ then $\gamma(t)$ is ACB. This resembles the standard relation between $C^r$ and $C^{r+1}$ curves. Therefore it should not be very abusive to the standard mathematical jargon to denote the class $L^\infty$ as $C^{0_-}$, and the class ACB as $C^{1_-}$ (so that $C^{0_-+1}=C^{1_-}$ and also $C^{1_-}\subset C^0$). 

\subsection{The sub-Riemannian geodesic problem}\label{ssec:sr_geom}

\paragraph{The sub-Riemannian geodesic problem.} We shall be working within the following geometric setting. Let $M$ be a smooth $n$-dimensional manifold and let $\D\subset\T M$ be a smooth  distribution of rank $k$ on $M$. By $g(\cdot,\cdot):\D\times_M\D\ra\R$ we shall denote a positively-defined symmetric bilinear product on $\D$ (called a \emph{sub-Riemannian metric}). The triple $(M,\D,g)$ constitutes a \emph{sub-Riemannian} \emph{structure} (\emph{SR structure}, in short). Given a pair of points $q_0,q_1\in M$ and an interval $[t_0,t_1]\subset\R$ we consider the following \emph{SR geodesic problem}: 
\begin{equation*}\tag{P}
\label{eqn:sr_geod_prob}
\text{Find an absolutely continuous curve $q:[t_0,t_1]\ra M$ satisfying the following conditions:}
\end{equation*}
\begin{enumerate}[(P1)]
	\item\label{cond:1} $q$ is almost everywhere tangent to $\D$, i.e., $\dot q(t)\in \D_{q(t)}$ for a.e. $t\in[t_0,t_1]$.
	\item\label{cond:2} $q$ joins $q_0$ and $q_1$, that is, $q(t_0)=q_0$ and $q(t_1)=q_1$.
	\item in the set of all curves satisfying conditions (P\ref{cond:1}) and (P\ref{cond:2}), curve $q$ realizes the minimum of the \emph{energy functional} 
	$$q\longmapsto \Eng[q]:=\frac 12 \int_{t_0}^{t_1} g(\dot q(\tau),\dot q(\tau))\ \dd \tau\ .$$
\end{enumerate}
Ii is well-known \cite{Montgomery_2006} that under the assumption that $\D$ is bracket-generating, and that $M$ is complete, problem \eqref{eqn:sr_geod_prob} has a solution for every $q_0,q_1\in M$ and every $[t_0,t_1]\subset\R$. A solution of the problem \eqref{eqn:sr_geod_prob} is called a \emph{minimizing SR geodesic}. 

 It follows easily from the Cauchy-Schwarz inequality that the minimizing SR geodesic has to be normalized, i.e., $g(\dot q(t),\dot q(t))$ is constant on $[t_0,t_1]$. Therefore we may actually look for the solutions of \eqref{eqn:sr_geod_prob} in the class of ACB curves. In the forthcoming control-theoretic formulation of the SR geodesic problem, this simple observation will allow us to consider the smaller class of $L^\infty$-controls, instead of all $L^1$-controls. 
 


\paragraph{Control-theoretic formulation.} In a local version, problem \eqref{eqn:sr_geod_prob} can be formulated as an optimal control problem. Indeed, choose  $k$ smooth, linearly independent vector fields $X_1,\hdots, X_k$ spanning locally the distribution $\D$. Note that, by applying the Gram-Schmidt orthogonalization, we may assume that the fields $\{X_i\}_{i=1,\hdots,k}$ form an orthonormal basis of $\D$, i.e., $g(X_i,X_j)=\delta_{ij}$. Now every ACB curve $q(t)$ almost everywhere tangent to $\D$   may be regarded as a solution of
\begin{equation}
\label{eqn:trajectory_u}
\tag{$\Sigma$}
\dot q(t)=\sum_{i=1}^ku^i(t)\cdot X_i\Big|_{q(t)}\ ,
\end{equation}
where $t\mapsto u(t)=(u^1(t),\hdots, u^k(t))\in\R^k$ is an $L^\infty$-map, and equation \eqref{eqn:trajectory_u} is regarded in the sense of Caratheodory, i.e. the equality holds for almost every $t\in[t_0,t]$. We shall call such a $q$ a \emph{SR trajectory} corresponding to the \emph{control} $u$.

Note that under the assumption that the fields $X_i$ are normalized as $g(X_i,X_j)=\delta_{ij}$, the energy of such a trajectory is simply $\Energy{t_0,t_1}[u]$, where we define
\begin{equation}
\label{eqn:def_energy}
\tag{E}
    \Energy{t_0,t}[u]:=\frac 12\int_{t_0}^{t}\sum_{i=1}^k \left(u^i(\tau)\right)^2\dd \tau=\frac 12\|u\|_{\Ll{t_0}{t}}^2\ .
\end{equation}

\smallskip

From now on we shall \textbf{study the SR geodesic problem \eqref{eqn:sr_geod_prob} within such a local framework}, i.e. we take $M\overset{\text{loc.}}=\R^n$, and choose $\{X_i\}_{i=1,\hdots,k}$ a global $g$-orthonormal basis of smooth vector fields spanning $\D$. 

\begin{remark}[Notation in the extended configuration space]\label{rem:notation}
It is convenient to think of both, the trajectory $q(t)$, and the energy $\Energy{t_0,t}$ at the same time, by introducing \emph{the extended configuration space} $\wt M:=M\times\R$. For a control $u\in \LL{t_0}{t_1}$ the \emph{extended trajectory} is defined as $\bm q(t):=(q(t),\Energy{t_0,t}[u])$, where $q(t)$ satisfies \eqref{eqn:trajectory_u}. Similar objects, consequently denoted by bold letters, will be considered thorough the paper. 
\end{remark}

\subsection{On jets}\label{ssec:jets}

In the paper we speak about second-order optimality conditions in SR geometry. The term \emph{second-order} means that we  will consider the second-order Taylor expansions of various objects. Geometrically, we rather speak about \emph{2-jets}. Below we recall some basic definitions and constructions from jet theory \cite{Saunders_1989, KMS_1993}. 

\paragraph{Jets of curves.} We say that two $C^2$-curves $\gamma,\widetilde\gamma:(-\eps,\eps)\lra M$ are \emph{tangent of order two} at $p=\gamma(0)=\widetilde{\gamma}(0)$ if in some (and thus any) local coordinate system around $p$ on $M$ we have 
$$\gamma'(0)=\widetilde\gamma'(0)\quad\text{and}\quad \gamma''(0)=\widetilde\gamma''(0)\ .$$
In other words, maps $s\mapsto\gamma(s)$ and $s\mapsto\widetilde\gamma(s)$ have the same order-two Taylor expansions at $s=0$. The relation $\sim_2$ of being tangent of order two  is an equivalence relation and its equivalence classes $[\gamma]_{\sim_2}=[\widetilde\gamma]_{\sim_2}$ are called \emph{2-jets} at $p$. We denote the set of all 2-jets at $p$ by $\T^2_p M$, and call it the \emph{second tangent space of $M$ at $p$}. In fact the collection $\T^2M:=\bigcup_{p\in M}\T^2_pM$ is a locally trivial bundle over $M$. We call it the \emph{second tangent bundle of $M$}. It is an example of a \emph{graded bundle} in the sense of \cite{Grabowski_Rotkiewicz_2012}, with the canonical multiplicative $\R$-action defined on representatives by $\lambda \mapsto \left(s\mapsto \gamma(\lambda\cdot s)\right)$. 
\smallskip

Given a curve $\gamma:(-\eps,\eps)\ra M$ and its (local) second-order Taylor expansion  
$$\gamma(s)\overset{loc}=p+s\cdot \beta^{(1)}+{s^2}\cdot \beta^{(2)}+o(s^2)\ ,$$
we may thus identify its 2-jet with a pair $(\beta^{(1)},\beta^{(2)})\simeq[\gamma]_{\sim_2}$. Although one should remember, that only the first term $\beta^{(1)}$ is a geometric object, representing a tangent vector at $p$, while the term $\beta^{(2)}$ on its own (without the presence of a particular $\beta^{(1)}$) has no intrinsic geometric meaning. In a particular situation when $M=V$ is a vector space we may, however, canonically identify $\T^2_p V\simeq V\times V$, by considering the above Taylor expansion in any linear coordinates on $V$. Thus for a vector space, the bare term $\beta^{(2)}$ has a geometric interpretation of an element of $V$. 

\paragraph{Jets of maps.}

Let $F:M\ra N$ be a smooth map between manifolds. For a  $C^2$-curve $\gamma:(-\eps,\eps)\ra M$ passing through $\gamma(0)=p\in M$ consider a (local) second-order  Taylor expansion 
$$F(\gamma(s))\overset{loc}=F(p)+s\cdot \bb{1}+s^2\cdot \bb{2}+o(s^2)\ .$$
 Then the pair $(\bb{1},\bb{2})$ depends only on the 2-jet $[\gamma]_{\sim_2}$. Thus we may define the 2-jet of map $F$ $$ \T^2F:\T^2 M\lra \T^2N\qquad\text{via}\qquad \T^2_pM\ni[\gamma]_{\sim_2}\longmapsto [F(\gamma)]_{\sim_2}\in\T^2_{F(p)}N\ .$$ 
$\T^2 F$ is a bundle map over $F:M\lra N$.

\paragraph{Pairings on jets.}
As a particular example of the above consider a smooth function $f:M\lra \R$ and a $C^2$-curve $\gamma:(-\eps,\eps)\lra M$ passing through $\gamma(0)=p\in M$. The order-two Taylor expansion of the composition $f(\gamma):(-\eps,\eps)\lra\R$ reads as
$$f(\gamma(s))=f(p)+s\cdot \Der_pf[\gamma'(0)]+s^2\cdot\left\{ \Der_pf[\gamma''(0)]+\Der^2_pf[\gamma'(0),\gamma'(0)]\right\}+o(s^2)\ ,$$
and it depends only on the 2-jet $[\gamma]_{\sim_2}$. Further, the resulting 2-jet $[f(\gamma)]_{\sim_2}$ is valued in the second tangent space of a vector space $\T^2_{f(p)}\R\simeq \R\times\R$, and so the term of degree two 
\begin{equation}\label{eqn:pairing_jet}
\Der_pf[\gamma''(0)]+\Der^2_pf[\gamma'(0),\gamma'(0)]
\end{equation}
has an intrinsic geometric meaning. 
\smallskip

Note that for a fixed 2-jet $[f(\gamma)]_{\sim_2}$, the value of \eqref{eqn:pairing_jet} depends only on the first and second derivatives $\Der_pf$ and $\Der^2_p f$, i.e. the second-order Taylor expansion of $f$ at $p$. Since we may construct smooth functions with arbitrary derivatives at a point, we may define via
\begin{equation}\label{eqn:pairing_jet_new}
    \<(\phi,\Phi),(\bb{1},\bb{2})>:=\<\phi,\bb{2}>+\Phi[\bb{1},\bb{1}]
\end{equation}
a natural (and geometrically meaningful) pairing between a 2-jet of a curve $(\bb{1},\bb{2})\in \T^2_pM$ and a pair $(\phi,\Phi)$ consisting of a linear map $\phi:\T_p M\lra\R$ and  a symmetric bi-linear map $\Phi:\T_pM\times\T_pM\lra\R$.

\begin{remark}\label{rem:2_covector_pairing}
It is worth to mention that the pairs $(\phi,\Phi)$ as above can be identified with the elements of $\T^{2\ast}_pM$, i.e. 2-covectors at $p\in M$ in the sense of Tulczyjew \cite{Tulczyjew}. We define the latter as equivalence classes of smooth functions on $M$, saying that $f,\widetilde f:M\ra\R$ are equivalent if and only if $f(\gamma(s))$ and $\widetilde f(\gamma(s))$ have the same second-order Taylor expansions at $s=0$ for any smooth curve $\gamma:(-\eps,\eps)\ra M$ passing through $p=\gamma(0)$. The pairing \eqref{eqn:pairing_jet_new} can be thus interpreted as a map $\T^{2\ast}_{p}M\times \T^{2}_{p} M\lra \R$. We remark that it has an elegant description in terms of the Weyl algebra of dual numbers \cite{KMS_1993}.
\end{remark}

\subsection{The end-point map and its second-order expansion}\label{ssec:end-point_map}

\paragraph{The end-point  map and its second expansion.} Denote by 
$$\End{t_0,t}{q_0}:\LL{t_0}{t}\lra M$$ 
the \emph{end-point map} of the control system \eqref{eqn:trajectory_u} at time $t$  and with the initial condition $q_0$ at $t=t_0$. That is, for a control $u\in\LL{t_0}{t}$, $\End{t_0,t}{q_0}[u]$ is a solution $q(t)$ of \eqref{eqn:trajectory_u} satisfying $q(t_0)=q_0$.
\smallskip

We would like to study the following second-order expansion of this map (written in some local coordinate system)
\begin{equation}
\label{eqn:expansion_end}
    \End{t_0,t}{q_0}[u+s\cdot\du]\overset{loc}=\End{t_0,t}{q_0}[u]+s\cdot \bb{1}(t,\du)+s^2\cdot \bb{2}(t,\du)+o(s^2)\ ,
\end{equation}
where $\du\in \T_u \LL{t_0}{t}\simeq \LL{t_0}{t}$ is a control, and $s$ is a real parameter. Of course $\bb{1}(t,\du)=\Der_u\End{t_0,t}{q_0}[\du]$, and one is tempted to write $\bb{2}(t,\du)=\frac 12\ \Der^2_u\End{t_0,t}{q_0}[\du,\du]$. The last equality makes, however, no sense as in general the second derivative of a manifold-valued map is not a well-defined geometric object -- cf. \cite[Chap. 20]{Agrachev_Sachkov_2004}. Expansion \eqref{eqn:expansion_end} has a geometric meaning though -- by our considerations in Ssec.~\ref{ssec:jets}  it describes the second tangent map $\T^2\End{t_0,t}{q_0}$ evaluated on a 2-jet $[s\mapsto u+s\cdot\du+s^2\cdot 0]_{\sim 2}\in \T^2_u\LL{t_0}{t}\simeq \LL{t_0}{t}\times\LL{t_0}{t}$. Thus a pair $(\bb{1}(t,\du),\bb{2}(t,\du))$ is a local representative of some element in $\T^2_{q(t)}M$. 
\smallskip

To calculate $\bb{1}(t,\du)$ and $\bb{2}(t,\du)$, denote by $q_s(t)$ a solution of the following ODE
\begin{equation}
\label{eqn:end_u_plus_s_du}
    \dot {q_s}(t)=\sum_{i=1}^k\left(u^i(t)+s\cdot\du_i(t)\right)\cdot X_i\Big|_{q_s(t)}\qquad\text{with}\qquad q_s(t_0)=q_0\ .
\end{equation}
Obviously $q_s(t)=\End{t_0,t}{q_0}[u+s\cdot\du]$. What is more, \eqref{eqn:end_u_plus_s_du} is an ODE in the sense of Caratheodory, depending smoothly on the parameter $s$. As such it has a solution which also smoothly depends on $s$ -- see \cite{Bressan_Piccoli_2004} -- and, moreover, equations for $\bb{1}(t,\du)=\pa_s\big|_{s=0}q_s(t)$ and $\bb{2}(t,\du)=\frac 12 \pa_s^2\big|_{s=0}q_s(t)$ are obtained by differentiating \eqref{eqn:end_u_plus_s_du} with respect to $s$, leading to (for notation simplicity below we omit the dependence of $\bb{1}(t,\du)$ and $\bb{2}(t,\du)$ on $\du$) 
\begin{equation}\label{eqn:b1}
\dot{\bb{1}}(t)=\sum_{i=1}^k u_i(t)\cdot \Der_{q(t)} X_i\left[\bb{1}(t)\right]+\sum_{i=1}^k \du_i(t)\cdot X_i\Big|_{q(t)}
\end{equation}
with the initial condition $\bb{1}(t_0)=0$; and
\begin{equation}\label{eqn:b2}
\dot{\bb{2}}(t)=\sum_{i=1}^k u_i(t)\left\{\Der_{q(t)}X_i\left[\bb{2}(t)\right]+ \frac 12\Der^2_{q(t)}X_i \left[\bb{1}(t),\bb{1}(t)\right]\right\}+2\sum_{i=1}^k \du_i(t)\cdot\Der_{q(t)} X_i\left[\bb{1}(t)\right]
\end{equation}
where $\bb{1}(t)$ is as above, and the initial condition is $\bb{2}(t_0)=0$. 

\paragraph{The extended end-point map.} Now we would like to add the energy \eqref{eqn:def_energy} to the picture. We define the
\emph{extended end-point map} 
$$  \END{t_0,t}{q_0}: \LL{t_0}{t}\lra M\times\R\ , $$ simply as $\END{t_0,t}{q_0}[u]:=(\End{t_0,t}{q_0}[u],\Energy{t_0,t}[u])$. For any $\du\in\LL{t_0}{t}$ the second expansion of the energy term is just 
\begin{equation}
    \label{eqn:expansion_energy}
    \Energy{t_0,t}[u+s\cdot\du]=\Energy{t_0,t}(u)+s\cdot \cc{1}(t,\du)+s^2 \cdot \cc{2}(t,\du)\ ,
\end{equation}
where $\cc{1}(t,\du)=\int_{t_0}^t \sum_i u_i(\tau)\cdot \du_i(\tau)\ 
\dd \tau$ and $\cc{2}(t,\du)=\int_{t_0}^t \sum_i \du_i(\tau)^2\ \dd \tau$. Or, in differential terms:
\begin{equation}
    \label{eqn:c1_c2}
    \begin{split}
\dot {\cc{1}}(t)&=\sum_{i=1}^k u_i(t)\cdot \du_i(t)\\
\dot {\cc{2}}(t)&=\sum_{i=1}^k  \du_i(t)^2
\end{split}
\end{equation}
with the initial conditions $\cc{1}(t_0)=0$ and $\cc{2}(t_0)=0$.

\paragraph{Normal and abnormal extremals}
Note that if for some $t$ the vector space 
$$\image\Der_u \END{t_0,t}{}=\{(\bb{1}(t,\du),\cc{1}(t,\du))\that \du \in \LL{t_0}{t_1}\}$$
equals  $\T_{(q(t),\Energy{t_0,t}[u])} (M\times\R)$ then, by the open-mapping theorem, the map $\END{t_0,t}{}$ is open at $u$, and hence the related trajectory cannot be a minimizing sub-Riemannian geodesic. In this way we obtained a necessary condition that a SR geodesics should satisfy. Curves that satisfy this conditions are known as \emph{extremals}. We can further classify these according to the geometric properties of $\image\Der_u \END{t_0,t}{}$.  

\begin{definition}
\label{def:extremals}
Consider a  sub-Riemannian trajectory  $q(t)$ with $t\in[t_0,t_1]$ corresponding to the control $u\in \LL{t_0}{t_1}$. Curve $q(t)$ is called an \emph{extremal} if $\image\Der_u \END{t_0,t}{}$ is a proper subspace of $\T_{(q(t_1),\Energy{t_0,t_1}[u])} (M\times\R)$. An extremal is called \emph{normal} if the space $\image\Der_u \END{t_0,t}{}$ does not contain the subspace $\{0\}\times\R$. If the image $\image\Der_u \END{t_0,t}{}$ is contained in a space $V\times\R$, where $V\subset \T_{q(t_1)}M$ is some proper subspace, $q(t)$ is called \emph{abnormal}. We will say an abnormal extremal has \emph{co-rank $r$} if $\image\Der_u \END{t_0,t}{}$ is a co-rank $r$-subspace of $\T_{(q(t_1),\Energy{t_0,t_1}[u])} (M\times\R)$.\smallskip

\emph{Strictly abnormal} extremals are abnormal extremals which are not normal, that is $\image\Der_u \END{t_0,t}{}=V\times\R$ for some proper subspace $V\subset \T_{q(t_1)}M$.
\end{definition}

Normal extremals are known to be $C^\infty$-smooth and energy-minimizing on small distances \cite{Liu_Sussmann_1995,Montgomery_2006}.  

\paragraph{First-order abnormal optimality conditions}

Let us recall the following standard fact \cite{Montgomery_2006}.

\begin{lemma} Let a SR trajectory $q(t)$ corresponding to the control $u\in\LL{t_0}{t_1}$ be an abnormal extremal. Then there exists a nowhere-vanishing covector curve $\varphi(t)\in \T^\ast_{q(t)} M$ satisfying the following conditions:
\begin{equation}\label{eqn:pontryagin_covector}
    \begin{split}
    &\<X_i\big|_{q(t)},\varphi(t)>=0\qquad \text{for every $i=1,2,\hdots,k$}\\ 
    &\<b,\dot\varphi(t)>+\<\sum_iu_i(t)\cdot\Der_{q(t)}X_i[b],\varphi(t)>=0\quad\text{for every $b\in\T_{q(t)}M$. }
      \end{split}
\end{equation}
  We shall call such a $\varphi(t)$ a \emph{Pontryagin covector}. 
\end{lemma}

It is instructive to see that conditions \eqref{eqn:pontryagin_covector} can be easily derived from the following one 
$$\image\Der_u\END{t_0,t}{}\subset \ker (\varphi(t),0)\quad\text{(equivalently}\quad \<\bb{1}(t,\du),\varphi(t)>=0 \text{ for every $t$ and $\du$)}\ ,  $$
which express the fact that $\image\Der_u\END{t_0,t}{}$ is contained in a proper subspace $V\times\R\subset \T_{q(t)}M\times\T_{\Energy{t_0}{t}[u]}\R$ for every $t$.

 To see this it is enough to differentiate the above condition with respect to $t$ to get:
\begin{align*}
0=&\<\dot{\bb{1}}(t,\du),\varphi(t,\du)>+\<\bb{1}(t,\du),\dot{\varphi}(t)>\overset{\eqref{eqn:b1}}=\\
&\<\sum_i u_i(t)\Der_{q(t)} X_i[\bb{1}(t,\du)]>+\sum_i \du_i(t)\<X_i\big|_{q(t)},\varphi(t)>+ \<\bb{1}(t,\du),\dot{\varphi}(t)>\ .
\end{align*}
This should hold for every possible choice of the control $\du$. Thus the part linear in $\du$ should vanish leading to $\<X_i\big|_{q(t)},\varphi(t)>=0$ for every $i=1,2,\hdots,k$. Consequently we are left with
\begin{align*}
0=\<\sum_i u_i(t)\Der_{q(t)} X_i[\bb{1}(t,\du)]>+ \<\bb{1}(t,\du),\dot{\varphi}(t)>\ .
\end{align*}\smallskip

It is instructive to see that the notions from Definition~\ref{def:extremals} give the well-known first order optimality condition. For example if $q(t)$ is an abnormal extremal then there exists a non-zero covector $\varphi_0\in(\R^n)^\ast$ such that $\varphi_0+0\cdot\dd r\in (\R^{n+1})^\ast$ annihilates the reachable set $\RR(t_0,t_1)$. In more concrete terms 
$\<\q{1}(t_1,\du),\varphi_0>=0$ for every $\du\in\LL{t_0}{t_1}$. By \eqref{eqn:ev_q1} this gives us
\begin{align*}
    0=\<\int_{t_0}^{t_1}\sum_i \du_i(t)\cdot \Y{i}{1}(t)\ \dd t,\varphi_0 >=\int_{t_0}^{t_1}\sum_i \du_i(t)\cdot \<\Y{i}{1}(t),\varphi_0>\ \dd t
\end{align*}
and so for every $t\in [t_0,t_1]$ and every $i=1,2\hdots,k$
\begin{align*}
0=\<\Y{i}{1}(t),\varphi_0>\overset{\text{Lem.~\ref{lem:adapted}}}=\< \lin{1}(t)\left[X_i(q(t))\right],\varphi_0>=\<X_i(q(t)),\lin{1}(t)^\ast\varphi_0>\ .
\end{align*}
As we see $\varphi(t):=\lin{1}(t)^\ast\varphi_0\in \T^\ast_{q(t)}M$ annihilates the whole distribution $\D$ along the trajectory $q(t)$. Note that thanks to the evolution equations of $\lin{1}(t)$,  $\varphi(t)$ defined in such a way is a \emph{Pontryagin covector}, i.e. it is subject to an ODE\label{pontryagin_covector}
$$\<b,\dot\varphi(t)>+\<\sum_iu_i(t)\cdot\Der_{q(t)}X_i[b],\varphi(t)>=0\quad\text{for every $b\in\T_{q(t)}M$. }$$
At the end of the day we arrived at the standard conditions for an abnormal extremal \cite{Montgomery_2006}.

\section{The main result and its discussion}\label{sec:result}

In this section we will formulate our main result -- Theorem~\ref{thm:optimal_deg_2} -- and discuss its consequences. We would like to present our result as an extension of the classical Agrachev-Sarychev theory. However, for the sake of generalization,  we prefer to use the language of 2-jets rather then the one of first and second derivatives originally used in \cite{Agrachev_Sarychev_1996}. For this reason our notation will be somehow non-standard. 

\subsection{The Agrachev-Sarychev Index Lemma}

\paragraph{Statement of the Index Lemma}  Recall that by a \emph{negative index} of a quadratic form we understand the maximal dimension of a subspace on which this form is negatively-defined. We have the following classical result. 

        \begin{lemma}[Index Lemma \cite{Agrachev_Sarychev_1996}]\label{lem:AS_index} Let $q(t)$, with $t\in[t_0,t_1]$, be an abnormal minimizing SR geodesic of co-rank $r$, corresponding to the control $u\in\LL{t_0}{t_1}$.  Then there exists a non-vanishing Pontryagin covector $\varphi(t)\in\T^\ast_{q(t)} M$ such that for each $t\in[t_0,t_1]$ we have 
        \begin{enumerate}[(i)]
        \item $\operatorname{Im}{\Der_u\END{t_0,t}{q_0}}\subset\ker \varphi(t)$, and \item the negative index of the quadratic form (the \emph{Agrachev-Sarychev index}, \emph{AS index} in short)
        $$\< \Der^2_u\End{t_0,t}{q_0}[\cdot,\cdot],\varphi(t)>:\ker \Der_u\END{t_0,t}{q_0}\times \ker \Der_u\END{t_0,t}{q_0}\lra\R$$
        is smaller than $r$. 
        \end{enumerate}
        
        Above we identify $\varphi(t)\in \T^\ast M$ with $\varphi(t)+0\cdot \dd r\in \T^\ast M\times\T^\ast \R$, so that the pairing between $\varphi(t)$ and the element $\Der_u\END{t_0,t}{q_0}[\du]$ makes sense. 
        \end{lemma}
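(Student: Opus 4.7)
The plan is a contradiction argument. Let $F := \END{t_0,t_1}{q_0}$, write $L := \ker \Der_u F$, and let $\Phi_0 \subset \T^\ast_{(q(t_1),\Energy{t_0,t_1}[u])}(M\times\R)$ denote the annihilator of $\image \Der_u F$, which is $r$-dimensional by the co-rank assumption. Each $\varphi_0 \in \Phi_0$ determines a quadratic form $Q_{\varphi_0}(\du) := \<\Der^2_u F[\du,\du],\varphi_0>$ on $L$, and the claim asserts that some nonzero $\varphi_0 \in \Phi_0$ makes the negative index of $Q_{\varphi_0}$ strictly smaller than $r$. I would assume the opposite -- that every nonzero $\varphi_0 \in \Phi_0$ yields a form of negative index at least $r$ -- and derive a contradiction with the minimality of $q(t)$.

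The heart of the argument is an open-mapping theorem for quadratic maps. Fix a basis $\varphi_0^{(1)},\ldots,\varphi_0^{(r)}$ of $\Phi_0$ and assemble the vector-valued quadratic form
\[
Q : L \lra \R^r, \qquad Q(\du) := \bigl(Q_{\varphi_0^{(1)}}(\du),\ldots,Q_{\varphi_0^{(r)}}(\du)\bigr).
\]
I would invoke the following fact: if every nontrivial linear combination $\sum_i a_i Q_{\varphi_0^{(i)}}$ has negative index at least $r$, then $0\in\R^r$ is an interior point of $Q(L)$. Its proof reduces to finite dimensions by restricting $Q$ to a well-chosen subspace $L_0 \subset L$ and then applies Brouwer degree of the Borsuk-Ulam flavor: one parametrizes $r$ linearly independent negative-definite directions over the unit sphere in $\Phi_0$ to construct an odd map $S^{r-1}\to L_0\setminus\{0\}$ whose image under $Q$ covers a small sphere in $\R^r$ with nonzero degree. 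Combining this quadratic openness at zero with the second-order Taylor expansion of $\Pi\circ F$, where $\Pi:\R^{n+1}\to\R^{n+1}/\image \Der_u F\simeq\R^r$ is the quotient projection, yields local openness of $\Pi\circ F$ at $u$ along every direction transverse to $\image \Der_u F$. An implicit-function-style correction on a complement of $L$, using surjectivity of $\Der_u F$ onto $\image \Der_u F$, upgrades this to local solvability of $F(u+\du)=(q(t_1),\Energy{t_0,t_1}[u]-\delta)$ for arbitrarily small $\delta>0$. Such a variation would produce a $\D$-horizontal curve reaching $q(t_1)$ with strictly smaller energy, contradicting minimality of $q(t)$.

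The main obstacle is the open-mapping theorem for $Q$: one must execute the finite-dimensional reduction on the infinite-dimensional Hilbert-like space $L\subset\Ll{t_0}{t_1}$ and run the degree argument uniformly enough to absorb the second-order Taylor remainder of $F$. A secondary technical point is choosing the function space so that $F$ is Fr\'echet $C^2$ with jointly continuous second derivative, so that openness from the purely quadratic model propagates to the true nonlinear $F$; and one also needs to verify that the complement of $L$ on which the implicit function theorem is applied can be taken uniformly bounded relative to the variations $\du$ produced in the openness step. Both of these are standard in the Agrachev-Sarychev framework but require careful bookkeeping to stitch together.
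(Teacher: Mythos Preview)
The paper does not prove this lemma: it is quoted from Agrachev--Sarychev \cite{Agrachev_Sarychev_1996} and the reader is referred to \cite[Appendix~B]{Rifford_2014} for a concise argument. So there is no in-paper proof to compare against; your sketch is in fact an outline of the classical proof found in those references, and the core mechanism you describe --- assume the index condition fails for every annihilating covector, invoke the open-mapping theorem for vector-valued quadratic forms to make the second-order model of $F$ surjective onto a neighbourhood of $0$ in the cokernel, then correct along a complement of $L$ via the implicit function theorem to make $F$ itself locally open, contradicting minimality --- is exactly the Agrachev--Sarychev strategy.

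There is, however, one genuine slip. You take $\Phi_0$ to be the full annihilator of $\image \Der_u F$ inside $\T^\ast(M\times\R)$ and phrase the contradiction hypothesis as ``every nonzero $\varphi_0\in\Phi_0$ has negative index $\geq r$''. But the lemma asks for $\varphi_0\in\T^\ast_{q(t_1)}M$, i.e.\ a covector with \emph{vanishing} $\R$-component. Your argument, as written, only produces some $\lambda\in\Phi_0$ with small negative index, and nothing prevents $\lambda$ from having a nonzero cost component. In the strictly abnormal case this is harmless, since then $\image\Der_uF$ contains the cost direction and every element of $\Phi_0$ automatically has zero $\R$-component; this is also the only case the paper ultimately uses (Theorem~\ref{thm:optimal_deg_2} assumes strict abnormality). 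In the general abnormal case one must argue separately --- typically by observing that the Hessian paired with a covector whose $\R$-component is nonzero contains the term $\lambda_{\R}\cdot\|\du\|^2$, which is sign-definite on an infinite-dimensional space, and exploiting this to force the small-index covector into the $\lambda_{\R}=0$ hyperplane. This refinement is present in the cited sources but is missing from your outline.
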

For a concise proof we refer to the recent monograph \cite[Appendix B.]{Rifford_2014}. Let us remark, that although the second derivative $\Der^2_u\End{t_0,t}{}$ has only a local sense, the above map $\< \Der^2_u\End{t_0,t}{q_0}[\cdot,\cdot],\varphi(t)>$ is well-defined, since our attention is restricted to the kernel $\ker \Der_u\End{t_0,t}{}$ -- cf. our discussion of pairing of 2-jets preceding formula \eqref{eqn:pairing_jet_new}. 

\paragraph{A simple corollary}
We are most interested in the situation when the Agrachev-Sarychev index is actually zero, i.e. the respective quadratic map is non-negatively defined. It turns out that this is not so difficult to achieve by the following argument. 

As we shall prove later (see Lemma~\ref{lem:AS_index_zero_1}), function $t\longmapsto \operatorname{ind}_-\< \Der^2_u\End{t_0,t}{q_0}[\cdot,\cdot],\varphi(t)>$, assigning to each $t\in[t_0,t_1]$ the respective AS index, is non-decreasing. On the other hand, by the result of Lemma~\ref{lem:AS_index} this function is bounded. It follows that there are at most $r-1$ points of growth on the interval $[t_0,t_1]$. Inbetween these points the index must be zero. Therefore for the price of excluding a finite number of points from the trajectory (or, to see it differently, of dividing the trajectory into a finite number of pieces), we may set the AS index to zero. Now we have
    \begin{lemma}\label{lem:AS_index_zero} Under the assumptions of Lemma~\ref{lem:AS_index}, there exists a non-vanishing Pontryagin covector $\varphi(t)\in\T^\ast_{q(t)} M$ such that $$\<\bb{1}(t,\du),\varphi(t)>=0$$ 
  for every $t\in[t_0,t_1]$ and every $\du\in\LL{t_0}{t_1}$. 
  \smallskip
  
  Further, there exist at most $r+1$ points $\tau_0=t_0\leq \tau_1\leq\tau_2\leq\hdots\leq \tau_{r'}=t_1$ such that for each subinterval $[t_0',t_1']\subset(\tau_j,\tau_{j+1}]$ the following inequality
        \begin{equation}\label{eqn:inequality_as_lemma}
        \<\bb{2}(t,\du),\varphi(t)>\geq 0
        \end{equation}
        holds for every $\du\in\ker \Der_u\END{t_0',t_1'}{}\subset\LL{t_0'}{t_1'}$ and every $t\in[t_0',t_1']$. 
        \end{lemma}

\subsection{Optimality conditions of degree 2} 
\paragraph{The main result}
Our main result -- Theorem~\ref{thm:optimal_deg_2} -- may be seen as a twofold generalization of Lemma~\ref{lem:AS_index_zero}. First of all, we generalize inequality \eqref{eqn:inequality_as_lemma} by adding a quadratic part to it. Thanks to that controls $\du$ can be arbitrary, not just form the kernel of $\Der_u\END{t_0,t}{}$ as was previously the case. Since we are no longer restricted in the choice of controls $\du$, we will be able to add the monotonicity condition \eqref{eqn:2_jet_monotone} related with the discussed  inequality. This is the part that was completely absent in the original Agrachev-Sarychev theory. It has, however, important consequences, allowing us to derive new equations for abnormal geodesic in 
Theorem~\ref{thm:equations_of_motion}.

        \begin{theorem}\label{thm:optimal_deg_2} Let $q(t)$, with $t\in[t_0,t_1]$, be a strictly abnormal minimizing SR geodesic of co-rank $r$, corresponding to the control $u\in\LL{t_0}{t_1}$. For $\du\in \LL{t_0}{t_1}$ consider the second-order expansion of the extended end-point map in the form
        \begin{align*}\END{t_0,t}{}&[u+s\cdot \du]=\left(\End{t_0,t}{}[u+s\cdot\du],\Energy{t_0,t}[u+s\cdot\du]\right)\overset{\text{loc.}}=\\
        &\left(q(t),\Energy{t_0,t}[u]\right)+s\left(\bb{1}(t,\du),\cc{1}(t,\du)\right)+s^2\left(\bb{2}(t,\du),\cc{2}(t,\du)\right)+o(s^2)\ .
        \end{align*}
        
        Then there exists a non-vanishing Pontryagin covector $\varphi(t)\in\T^\ast_{q(t)} M$ such that 
        \begin{equation}\label{eqn:equality_b1}
            \<\bb{1}(t,\du),\varphi(t)>=0
        \end{equation}
        for every $t\in[t_0,t_1]$ and every $\du\in\LL{t_0}{t_1}$. 
        \smallskip

    Further there exists a division of the interval $[t_0,t_1]$ by at most $r(n-r-k+1)+1$ points $\tau_0=t_0 < \tau_1<\hdots<\tau_N=t_1$ such that on each sub-interval $[t_0',t_1']\subset(\tau_i,\tau_{i+1})$ there exists a family of symmetric  bi-linear maps $\PP(t):\T_{\END{t_0',t}{}[u]}(M\times\R)\times\T_{\END{t_0',t}{}[u]}(M\times\R)\lra \R$ parameterized by $t\in[t_0',t_1']$ satisfying the following conditions:
    \begin{itemize}
        \item $-\PP(t)[(\bb{1},\cc{1}),(\bb{1},\cc{1})]$ equals to the infimum
        $$\inf\{\<\bb{2}(t,\du),\varphi(t)>\that \du\in \LL{t_0'}{t_1'}, \bb{1}(t,\du)=\bb{1}, \cc{1}(t,\du)=\cc{1} \}\ .$$
            We conclude that for every $\du\in\LL{t_0'}{t_1'}$ and $t\in[t_0',t_1']$ the following inequality holds
        \begin{equation}\label{eqn:inequality_full}
            \<\bb{2}(t,\du),\varphi(t)>+\PP(t)[(\bb{1}(t,\du),\cc{1}(t,\du)),(\bb{1}(t,\du),\cc{1}(t,\du))]\geq 0;
        \end{equation}
    
        \item the assignment $t\mapsto \PP(t)$ is a map of bounded variation, hence differentiable almost everywhere,
        \item moreover,  for every $\du\in\LL{t_0'}{t_1'}$ the assignment
        \begin{equation}\label{eqn:2_jet_monotone}
            t\longmapsto \<\bb{2}(t,\du),\varphi(t)>+\PP(t)[(\bb{1}(t,\du),\cc{1}(t,\du)),(\bb{1}(t,\du),\cc{1}(t,\du))]
        \end{equation}
        is non-decreasing.
    \end{itemize}
    \end{theorem}
    
The proof is rather long and technical, and will be given in Section~\ref{sec:proof}.

\paragraph{Explanation of the main result}
Let us briefly discuss  the above result. At the fundamental level it says that there is a relation between optimality of an abnormal curve and the geometry of the second-order expansion of the extended end-point map $\END{t_0,t}{}$ for each time $t$. This relation is expressed by \eqref{eqn:equality_b1} -- an equality involving the first derivatives  of this map (the standard first-order optimality condition) -- and \eqref{eqn:inequality_full} -- an inequality  involving 2-jets of $s\mapsto \END{t_0,t}{}[u+s\cdot\du]$. The latter inequality is stated by means of a pairing of a 2-jet and 2-covector $((\phi(t),0),\PP(t))$ on $M\times\R$ -- cf. Remark~\ref{rem:2_covector_pairing}. To understand the meaning of these conditions let $f:M\times\R\ra\R$ be any function realizing this 2-covector, i.e such that $\Der f[\cdot]=\<\cdot,(\varphi(t),0)>$ and $\Der^2f[\cdot,\cdot]=\PP(t)[\cdot,\cdot]$. Then conditions \eqref{eqn:equality_b1}--\eqref{eqn:inequality_full} imply that, up to the terms of order higher than 2, the curves $s\mapsto \End{t_0,t}{}[u+s\cdot \du]$ lie on a one side of a level set of $f$ -- cf. our considerations on page~\pageref{eqn:pairing_jet}. To put it differently: there is a quadratic hypersurface which bounds the 2-jets of curves $s\mapsto \End{t_0,t}{}[u+s\cdot \du]$ for every possible choice of control $\du$. Since $\Der f=(\varphi(t),0)$ is trivial on $\R$-component, this hypersurface is 1-tangent to the energy direction $\R$ in $M\times\R$. The question whether we have also 2-tangency of these objects will be a basis of a distinction between \emph{2-normal} and \emph{2-abnormal} curves, which we shall introduce later in this section. 
\smallskip

 At this point it is worth to explain a presumable contradiction, pointed out by one of the reviewers of this paper. Namely, in the Agrachew-Sarychew Lemma \ref{lem:AS_index}, since our attention is restricted to controls $\du$ in the kernel of $\Der_u \END{t_0,t_1}{q_0}$, we can state conditions on the second derivative of the end-point map $\Der^2_u\END{t_0,t_1}{q_0}$ -- which now becomes the dominant term of the expansion of $\END{t_0,t_1}{q_0}[u+s\cdot\du]$. On the contrary, assumptions of Theorem~\ref{thm:optimal_deg_2} do not require $\Der_u\END{t_0,t_1}{q_0}[\du]$ to vanish, hence the access to the second derivative $\Der^2_u\END{t_0,t_1}{q_0}[\du,\du]$ does not seem so obvious. Note, however, that we do not study the end-point map  $\END{t_0,t_1}{q_0}[u+s\cdot\du]$ directly, but rather through the composition $f\circ\END{t_0,t_1}{q_0}[u+s\cdot\du]$ . The latter expands as (cf. the last paragraph of Subsection~\ref{ssec:jets})
\begin{align*}
    f\circ&\END{t_0,t_1}{q_0}[u+s\cdot\du]= f\circ\END{t_0,t_1}{q_0}[u]+s\cdot \Der f\left[\Der_u\END{t_0,t_1}{q_0}[\du]\right]+\\
    &s^2\cdot\left(\Der f\left[\Der^2_u\END{t_0,t_1}{q_0}[\du,\du]\right]+\Der^2f\left[\Der_u\END{t_0,t_1}{q_0}[\du],\Der_u\END{t_0,t_1}{q_0}[\du]\right]\right)+o(s^2)
\end{align*}
Now $\Der f$ is the Pontryagin covector which annihilates $\Der_u\END{t_0,t_1}{q_0}[\du]$, making the quadratic part the dominant term in the expansion. Theorem~\ref{thm:optimal_deg_2} gives us precisely information about this quadratic term, for $\Der f=(\varphi(t),0)$ and $\Der^2 f[\cdot,\cdot]=\PP(t)[\cdot,\cdot]$.\smallskip

The important part of the assertion is that we have a direct description of what this quadratic map $\PP(t)$ is. Namely, $-\PP(t)[(\bb{1},\cc{1}),(\bb{1},\cc{1})]$ is defined as a solution of the (optimal control) problem of minimizing the value $\<\bb{2}(t,\du),\varphi(t)>$ in the class of controls $\du$ that give  fixed values  $\bb{1}(t,\du)=\bb{1}$ and $\cc{1}(t,\du)=\cc{1}$. In other words, we cannot further improve inequality \eqref{eqn:inequality_full} -- it describes the envelope of the space of all 2-jets of $s\mapsto \END{t_0,t}{}[u+s\cdot\du]$. This description allows to calculate $\PP(t)$ in concrete examples. Let us  stress, that neither the finiteness of the solution of the above optimal control problem, nor the property it is actually quadratic in $(\bb{1},\cc{1})$ is a trivial fact. 
\smallskip

There is a caveat to what we said above. Namely the described properties of the expansion of the extended end-point map may not be true on the whole geodesic, but only on its sub-intervals obtained after a removal of a finite number of points. These points play essentially the same role as the points at which the AS index grows (cf. Lemma~\ref{lem:AS_index})
-- their removal guarantees the positive-definiteness of the quadratic maps considered in the discussed results. In principle, it should be possible to reformulate the conditions \eqref{eqn:inequality_full} of Theorem~\ref{thm:optimal_deg_2} as a finite-index condition for some global map quadratic in $\du$ (as it is done in the original Agrachev-Sarychev Lemma~\ref{lem:AS_index}). However, in such a formulation one would not be able to state condition \eqref{eqn:2_jet_monotone}. As we shall see shortly, the latter condition is responsible for the derivation of evolution equations, hence possibly also proving regularity of the trajectory. Therefore, we see no satisfactory way of getting rid of the point-removal procedure. On the other hand, isolated point singularities may be treated by the methods of \cite{no_corners_2016}. 
    
\paragraph{Equations of an abnormal geodesic} 
Conditions \eqref{eqn:equality_b1}--\eqref{eqn:inequality_full} are formulated for every $t\in[t_0,t_1]$. On the other hand, the elements $\bb{i}(t,\du)$ and $\cc{i}(t,\du)$, for $i=1,2$ encoding the 2-jet of $s\mapsto\END{t_0,t}{}[u+s\cdot\du]$ satisfy \eqref{eqn:b1}-\eqref{eqn:b2} and \eqref{eqn:c1_c2}, and thus can be constructed by an integration along $[t_0,t]$. Therefore, we should expect that optimality conditions at time $t$ are somehow \emph{compatible} with the optimality conditions at all preceding times. \smallskip

For condition \eqref{eqn:equality_b1} this ``compatibility'' is expressed by the fact that $\varphi(t)$ is a Pontryagin covector -- we have seen that at the very end of Section~\ref{sec:prem}. 
By the same token, condition \eqref{eqn:2_jet_monotone} is responsible for ``compatiblity'' of conditions \eqref{eqn:inequality_full} at different times $t$. This will become clear in the proof of Theorem~\ref{thm:optimal_deg_2}. At this moment we shall see that an argument very similar to the one above allows to derive from \eqref{eqn:2_jet_monotone} concrete relations between the objects appearing in the assertion of Theorem~\ref{thm:optimal_deg_2}. In particular, we get equations that every abnormal minimizing SR geodesic should satisfy. These were unknown to date.
    
    \begin{theorem}\label{thm:equations_of_motion}
    Within the assumptions and notation of Theorem~\ref{thm:optimal_deg_2}, consider the Pontryagin covector $\varphi(t)$ and a family of symmetric bi-linear maps $\PP(t)$ defined on a segment $[t_0',t_1']\ni t$. 
    
    Then we may  decompose $\PP(t)$ as $$\PP(t)[(\bb{1},\cc{1}),(\bb{1},\cc{1})]=\Pp(t)[\bb{1},\bb{1}]+\<\bb{1},\xi(t)>\cc{1}+a_2(t)(\cc{1})^2\ ,$$
       where $\Pp(t):\T_{q(t)}M\times\T_{q(t)}M\ra\R$ is a symmetric bi-linear map, $\xi(t):\T_{(q(t)}M\ra\R$ a linear map, and  $a_2(t)\in\R$ a number.\smallskip

        These objects satisfy the following conditions:
        \begin{itemize}
            \item maps $\Pp(t)$, $\xi(t)$ and $a_2(t)$ are of bounded variation, hence differentiable almost everywhere. 
            \item  for almost every $t\in[t_0',t_1']$  the quadratic  form  
        \begin{equation}
        \label{eqn:2_form_positive}\begin{split}
        \<\sum_{i=1}^k u_i(t) \Der^{2}_{q(t)} X_i[\bb{1},\bb{1}],\varphi(t)>-\dotPp(t)[\bb{1},\bb{1}]-2\Pp(t)\left[\sum_{i=1}^k u_i(t)\Der_{q(t)}X_i[\bb{1}],\bb{1}\right]+\\
        -\left(\<\bb{1},\dot\xi(t)>+\<\sum_{i=1}^k u_i(t)\Der_{q(t)} X_i[\bb{1}],\xi(t)>\right)\cdot \cc{1}-\dot{a_2}(t)\cdot (\cc{1})^2
        \end{split}\end{equation}
        is non-negative for every $(\bb{1},\cc{1})=(\bb{1}(t,\du),\cc{1}(t,\du))$.
        
        \item  for almost every $t\in[t_0',t_1']$ the following equations are satisfied for every  $\bb{1}=\bb{1}(t,\du)$  and  each $i=1,2,\hdots,k$
        \begin{align}
        \label{eqn:xi_u_1}
        \<X_i\big|_{q(t)},\xi(t)>+2\ a_2(t)\cdot u_i(t)=&0\\
        \label{eqn:Phi_b_1}
        2\<\Der_{q(t)}X_i[\bb{1}],\varphi(t)>+2\Pp(t)\left[X_i\big|_{q(t)},\bb{1}\right]+\<\bb{1},\xi(t)>\cdot u_i(t)=&0\ .
        \end{align} 
        \end{itemize}
    \end{theorem}
\begin{proof}
Since $\PP(t)$ is symmetric bilinear, it is obvious that it can be decomposed into parts $\Pp(t)$, $\xi(t)$ and $a_2(t)$ as stated in the assertion. Further these new objects must also be maps of bounded variation, since $\PP(t)$ was such.\smallskip

Now fix the control $\du$ and consider the map $$t\longmapsto F(t):=\<\bb{2}(t,\du),\varphi(t)>+\PP(t)[(\bb{1}(t,\du),\cc{1}(t,\du)),(\bb{1}(t,\du),\cc{1}(t,\du)\ .$$
It is non-decreasing by the assertion of Theorem~\ref{thm:optimal_deg_2}, and differentiable almost-everywhere, as $\PP(t)$ is of bounded variation and all the other objects appearing in the formula are at least absolutely continuous. Whenever defined $F(t)$ should have  non-negative derivative. We can  calculate this derivative using evolution equations \eqref{eqn:b1},\eqref{eqn:b2}, \eqref{eqn:c1_c2} and \eqref{eqn:pontryagin_covector} to arrive at
$$F'(t)=\text{(a part linear in $\du(t)$)}+\text{(a part quadratic in $\bb{1}(t,\du)$ and $\cc{1}(t,\du)$)}.$$
This should be non-negative for every possible $\du$, so the linear part must vanish. But this linear part is just (we skip simple but lengthy calculations)
\begin{align*}\sum_i \du_i(t)\bigg\{&\left(2\<\Der_{q(t)}X_i[\bb{1}(t,\du)],\varphi(t)>+2\Pp(t)\left[X_i\big|_{q(t)},\bb{1}(t,\du)\right]+\<\bb{1}(t,\du),\xi(t)>\cdot u_i(t)\right)+\\
&\cc{1}(t,\du)\left(\<X_i\big|_{q(t)},\xi(t)>+2\ a_2(t)\cdot u_i(t)\right)\bigg\}\ .
\end{align*}
It is easy to observe that the above expression is zero for every possible $\du(t)$ if and only if \eqref{eqn:xi_u_1}--\eqref{eqn:Phi_b_1} hold. 

Now we are left with $F'(t)=\text{a part quadratic in $\bb{1}(t,\du)$ and $\cc{1}(t,\du)$}$, which should be non-negative. This is precisely (again we skip simple calculations) condition \eqref{eqn:2_form_positive}
\end{proof}

\subsection{Discussion of the main result}\label{ssec:discussion}

Let us now discuss some particular consequences of Theorems~\ref{thm:optimal_deg_2} and \ref{thm:equations_of_motion}.

\paragraph{Problem of regularity} 
Observe that if $\xi(t)$ and/or $a_2(t)$ are non-zero (we shall call $q(t)$  a \emph{2-normal extremal} in such a situation -- see the next paragraph), then at least one of equations \eqref{eqn:xi_u_1}--\eqref{eqn:Phi_b_1} gives us an algebraic equation for the controls $u_i(t)$ (and thus an ODE for the trajectory) expressed by means of the data of Theorem~\ref{thm:optimal_deg_2}. \smallskip

Since all $a_2(t)$, $\xi(t)$, and $\Pp(t)$ are maps of bounded variation, and $X_i\big|_{q(t)}$ shares the same regularity as $q(t)$, we can only conclude that in such a situation $u_i(t)$'s are functions of bounded variation.  \textbf{If, however, the time-regularity of quadratic map $\PP(t)$ could be improved, we could also improve the regularity of the controls, and thus the SR geodesic.} Recall that maps $\PP(t)$ are defined as solutions of a certain optimization problem (Bellmann value function to be more precise -- we shall discuss that in detail in Subsection~\ref{ssec:char_ocp}). Therefore the question  whether 2-normal extremals are smooth boils down to an other regularity question in optimal control theory. I was personally unable to get any general results in this direction (yet some more specific ones are collected in Section~\ref{sec:suplementary}), but perhaps someone more experienced in optimal control theory could solve this problem.

\paragraph{2-normality and 2-abnormality}
Observe that if $a_2(t)\neq 0$, equation \eqref{eqn:xi_u_1} provides information about the controls $u_i(t)$. Basing on this fact we propose the following definition:

        \begin{definition}
        \label{def:2_normal_abnormal}
        Let $q(t)$, with $t\in [t_0,t_1]$ be a strictly abnormal sub-Riemannian trajectory such that the pair $(\varphi(t),\PP(t))$ described by Thm~\ref{thm:optimal_deg_2} exists for every $t\in(t_0,t_1]$.  If for every $t\in(t_0,t_1]$ we have $\PP(t)[(0,1),(0,1)]=a_2(t)\neq 0$ we call $q(t)$ a \emph{2-normal extremal}.
        Otherwise we say that  $q(t)$ is an \emph{2-abnormal extremal}. 
        \end{definition}

By the results of Thm~\ref{thm:optimal_deg_2} every strictly abnormal SR trajectory is either non-minimizing, or can be divided into a finite number of pieces, each of them being either 2-normal or 2-abnormal extremal.
\smallskip

The division of abnormal trajectories into the classes of 2-normal and 2-abnormal extremals proposed above shares some natural similarities with an analogous distinction between normal and abnormal trajectories in degree one. Namely, 2-normal extremals are characterized by a property that the second-order approximation of their extended end-point map can be strictly separated from the direction of the decreasing cost (see fig. on page \pageref{fig:normal_abnormal}). Further they satisfy an ODE (although we were not able to prove their regularity from this property). On the other hand, for 2-abnormal extremals this separation may not be strict, and equation \eqref{eqn:xi_u_1} gives us no information about the control $u(t)$ at the points where $a_2(t)=0$.    

Note that if $a_2(t)\equiv 0$, equations~\eqref{eqn:ev_xi_1} and \eqref{eqn:xi_u_1} are the same as equations for an abnormal extremal given at page \pageref{pontryagin_covector}. Thus we have two abnormal Pontryagin  covectors $\varphi(t)$ and $\xi(t)$ in that case (although the latter may be trivial).\smallskip

Finally, note that the classes of 2-normal and 2-abnormal extremals are, in general, not disjoint as an abnormal trajectory of co-rank greater than one may admit several distinct Pontryagin covectors.

\paragraph{Derivation of Goh conditions}
\label{par:Goh}
 Equations \eqref{eqn:xi_u_1}-\eqref{eqn:Phi_b_1} allow for an easy derivation of \emph{Goh conditions}. Put $\bb{1}=X_j$ in \eqref{eqn:Phi_b_1} to get
\begin{align*}
2\<\Der X_i[X_j],\varphi>\overset{\eqref{eqn:Phi_b_1}}=-2\Pp\left[ X_i,X_j\right]-\<X_j,\xi>u_i(t)\overset{\eqref{eqn:xi_u_1}}=-2\Pp\left[ X_i,X_j\right]+a_2\cdot u_j(t)u_i(t)\ .
\end{align*}	
The right-hand side of the above expression is symmetric in $i$ and $j$, hence 
 $$\<[X_i,X_j],\varphi>=\<\Der X_i[X_j],\varphi>-\<\Der X_j[X_i],\varphi>=0\ .$$
The Goh condition may be seen as a vanishing of a non-symmetric part of the derivatives \eqref{eqn:xi_u_1}--\eqref{eqn:Phi_b_1} due to the symmetric nature of the quadratic form $\PP(t)$.

\paragraph{Other observations}
 If $a_2(t)\equiv0$ (or more generally if $\dot a_2(t)=0$), then the bi-linear form \eqref{eqn:2_form_positive} contains a bare term linear in $\cc{1}$. This term should vanish for every possible choice of $\bb{1}$, since otherwise the form would not be positively-defined. This reasoning gives the evolution equation for $\xi(t)$:
	\begin{equation}
	\label{eqn:ev_xi_1}
\<\bb{1},\dot\xi(t)>+\<\sum_{i=1}^ku_i(t)\cdot \Der_{q(t)} X_i[\bb{1}],\xi(t)>=0\qquad \text{for every $\bb{1}\in \image\left(\Der_u \End{t_0,t}{q_0}\right)$,}
	\end{equation}
	nd so, $\xi(t)$ is a (possibly trivial) \emph{Pontryagin covector}.
\medskip

The question whether the bi-linear form \eqref{eqn:2_form_positive} is strictly positive or has some null-directions is closely related to the question whether the infimum value of the quadratic map $\PP(t)$ is actually achieved by some control. We will discuss it in more detail in Section~\ref{sec:suplementary}. At this point note that if the bi-linear form \eqref{eqn:2_form_positive} is constantly zero, and if the objects $\Pp(t)$, $\xi(t)$ and $a_2(t)$ are absolutely continuous w.r.t. $t$ then, in addition to the above equation \eqref{eqn:ev_xi_1}, we get $\dot{a_2}(t)\equiv 0$ and, at the end of the day, also evolution equation for $\Pp(2)$:
\begin{equation}
    \label{eqn:ev_Phi_2}
    \<\sum_{i=1}^k u_i(t) \Der^{2}_{q(t)} X_i[\bb{1},\bb{1}],\varphi>-\dotPp[\bb{1},\bb{1}]-2\Pp\left[\sum_{i=1}^k u_i(t)\Der_{q(t)}X_i[\bb{1}],\bb{1}\right]=0
\end{equation}
which should hold for every $\bb{1}\in \image\left(\Der_u \End{t_0,t}{q_0}\right)$. Informally speaking, these equations mean that the pair $(\varphi(t),\Pp(t))$ has  a natural evolution compatible with the flow of the control vector field $q\mapsto \sum_i u_i(t)X_i(q)$. For more details the Reader is directed to \cite{MJ_BS_adapted}.

\subsection{Examples}\label{ssec:examples}

\paragraph{Example - Martinet system}
Consider $M=\R^3\ni (x,y,z)$ and a distribution $\mathcal{D}$ spanned by orthonormal vector fields $X_1=\pa_x$, and $X_2=(1-x)\pa_y+\frac{x^2}2 \pa_z$. Let $q(t)=(0,t,0)$ be a trajectory corresponding to the control $(u_1(t)\equiv 0,u_2(t)\equiv 1)$. It is well-known that $q(t)$ is an abnormal minimizing geodesic, and that $\phi(t)=\dd z$ is the related adjoint curve \cite{Montgomery_2006}. We want to see how Thm.~\ref{thm:optimal_deg_2} works in this case. 
\smallskip

First we would like to derive equations \eqref{eqn:b1} and \eqref{eqn:b2} for the considered situation. Let us decompose 
$$\bb{1}(t)=\bb{1}_x(t)\cdot \pa_x+\bb{1}_y(t)\cdot \pa_y+\bb{1}_z(t)\cdot \pa_z\quad\text{and}\quad \bb{2}(t)=\bb{2}_x(t)\cdot \pa_x+\bb{2}_y(t)\cdot \pa_y+\bb{2}_z(t)\cdot\pa_z\ .$$
Now note that 
$$\sum_{i=1}^2 u_i(t)\Der_{q(t)} X_i[\bb{1}]=-\dd x[\bb{1}]\cdot \pa_y\quad\text{and}\quad \sum_{i=1}^2 u_i(t)\Der^2_{q(t)}X_i[ \bb{1},\bb{1}]=\dd x\otimes\dd x[\bb{1},\bb{1}]\cdot \pa_z\ .$$ 

From the above observation equation \eqref{eqn:b1} reads as
$$\dot{\bb{1}_x}(t)=\du_1(t),\quad \dot{\bb{1}_y}(t)=-\bb{1}_x(t)+\du_2(t),\quad \dot{\bb{1}_z}(t)=0,\quad\text{and}\quad \dot{\cc{1}}(t)=\du_2(t)\ .$$
In particular,
\begin{equation}
\label{eqn:martinet}
\bb{1}_y(t)=-\int_0^t \bb{1}_x(\tau)\ \dd \tau +\int_0^t \du_2(\tau)\ \dd \tau=-\int_0^t \bb{1}_x(\tau)\ \dd \tau +\cc{1}(t)\ .
\end{equation}
Similarly, \eqref{eqn:b2} gives us
$$\dot{\bb{2}_x}(t)=0,\quad \dot{\bb{2}_y}(t)=-\bb{2}_x(t)-\du_2(t)\bb{1}_x(t),\quad \dot{\bb{2}_z}(t)=\bb{1}_x(t)^2,\quad\text{and}\quad \dot{\cc{2}}(t)=\du_1(t)^2+\du_2(t)^2\ .$$
In consequence, by the Cauchy-Schwarz inequality we can estimate
\begin{align*}
\<\bb{2}(t),\phi(t)>=\bb{2}_z(t)=\int_0^t \bb{1}_x(\tau)^2\ \dd\tau\overset{\text{C-S}}\geq\frac 1t\cdot \left(\int_0^t \bb{1}_x(\tau)\ \dd\tau\right)^2= \frac 1t\cdot \left[\cc{1}(t)-\bb{1}_y(t)\right]^2\ .
\end{align*}
It is easy to see that the above inequality is always strict,  yet on the other hand, it can be satisfied with an arbitrary accuracy.  It follows that every minimal 2-jet is finite, but the minimum is never attained. We conclude that $\costmin{0,t}{\bb{1},\cc{1}}=\frac 1t\cdot \left[\cc{1}(t)-\bb{1}_y(t)\right]^2$ and, consequently,
$$\Pp(t)=\frac 1t \dd y\otimes\dd y,\quad \xi(t)=-\frac 2t \dd y\quad\text{and}\quad a_2(t)=\frac 1t\ .$$
Since $a_2(t)\neq 0$ the considered trajectory is a \emph{2-normal extremal} in the sense of Definition~\ref{def:2_normal_abnormal}. We leave to the Reader checking that conditions  \eqref{eqn:xi_u_1}-\eqref{eqn:Phi_b_1} hold for the above data. Further, condition \eqref{eqn:2_form_positive} reads as
$$\left(\bb{1}_x(t)+\frac 1t \bb{1}_y(t)\right)^2-\left(\frac 2{t^2}\bb{1}_y(t) +\frac 2 t\bb{1}_x(t)\right)\cc{1}(t)+\frac 1{t^2}\cc{1}(t)^2=\left(\bb{1}_x(t)+\frac 1t \bb{1}_y(t)-\frac 1t \cc{1}(t)\right)^2\geq 0$$ 
and of course holds.

\paragraph{Example -- a corner on a free Carnot group}  The example below was studied by Hakavouri and Le Donne as communicated to us by \cite{ELD_new}. It turns out to be a 2-abnormal extremal.  \smallskip

Consider a free Carnot group $G$ or rank 2 and step 4. That is, the Lie algebra of left-invariant vector fields on $G$ is generated by two fields $X_1$, $X_2$, whose Lie brackets of order higher than 4 vanish, and Lie  brackets of order less or equal 4 have no relations apart from the obvious ones arising either from the skew-symmetry, or from the Jacobi identity. It follows that all left-invariant vector fields on $G$ are spanned (over $\R$) by fields: $X_1$, $X_2$,
$Y:=[X_1,X_2]$, $Z_1:=[X_1,Y]=[X_1,[X_1,X_2]]$, $Z_2:=[X_2,-Y]=[X_2,[X_2.X_1]]$, $W_1:=[X_1,Z_1]=[X_1,[X_1,[X_1,X_2]]]$, $W_2:=[X_2,Z_2]=[X_2,[X_2,[X_2,X_1]]]$, and $W_3:=[X_2,Z_1]=[X_1,-Z_2]=[X_1,[X_2,[X_1,X_2]]]$.Let us consider a left-invariant SR geodesic problem on $G$, constituted by setting $\D_q=\operatorname{span}_{\R}\{X_1,X_2\}\big|_q$ and the assumption that the fields $X_1$ and $X_2$ are orthonormal. \smallskip

In such a setting let $q:[-1,1]\ra G$ be the \emph{corner} controlled by $u(t)=(\chi_{[-1,0]}(t),\chi_{[0,1]}(t))$, i.e.
$$q(t)=\begin{cases}\exp(t\cdot X_1) & \text{for $t\in[-1,0]$}\\
\exp(t\cdot X_2) & \text{for $t\in[0,1]$}
\end{cases}\ .$$
It turns out that $q(t)$ is a strictly abnormal SR extremal, with 
$$\image \Der_u \END{-1,1}{}=\operatorname{span}_{\R}\{X_1,X_2,Y,Z_1,Z_2,W_1,W_2\}\big|_{q(1)}\oplus\R\ .$$ 
In consequence, there is only one (up to rescaling) candidate for the Pontryagin covector $\varphi(t)\in \T^\ast_{q(t)}G$ uniquely defined by conditions  $$\<W_3\big|_{q(t)},\varphi(t)>=1,\ \text{and}\ \<\cdot, \varphi(t)>=0\ \text{on the space}\ \operatorname{span}_{\R}\{X_1,X_2,Y,Z_1,Z_2,W_1,W_2\}\big|_{q(t)}\subset\T_{q(t)}G\ .$$

In this particular situation we were able to calculate $\<\bbb{2}(1,\du),\varphi(1)>$ for every control $\du=(\du_1,\du_2)\in L^\infty ([-1,1],\R^2)$. We skip the details of this computation, as it is rather long and  would require introduction of some additional theory regarding invariant SR structures on Lie groups. To state our result we need to introduce the function  $f:G\ra\R$ defined by 
$$f\left(\exp(w_3W_3)\exp(w_2 W_2)\exp (w_1 W_1)\exp (z_2 Z_2)\exp (z_1 Z_1)\exp (y Y)\exp(x_2  X_2)\exp(x_1 X_1)q(1)\right):=w_3\ $$
(obviously $\Der_{q(1)} f=\varphi(1)$), and a covector $\rho\in \T^\ast_{q(1)}G$  defined by conditions 
$$\<X_1\big|_{q(1)},\rho>=1,\ \text{and}\ \<\cdot, \rho>=0\ \text{on the space}\ \operatorname{span}_{\R}\{X_2,Y,Z_1,Z_2,W_1,W_2,W_3\}\big|_{q(1)}\ ,$$ 
i.e. $\<\bb{1}(1,\du),\rho>$ is the $X_1$-component of the vector $\bb{1}(1,\du)\in\T_{q(1)}G$. \smallskip

It turns out that for every $\du\in L^\infty([-1,1],\R^2)$ the pair $(\bbb{1}(1,\du), \bbb{2}(1,\du))$ is subject to the following equality
\begin{align*}
 \<\bbb{2}(1,\du),\varphi(1)>+\Der^2_{q(1)} f[\bb{1}(1,\du),\bb{1}(1,\du)]+\frac 12 \<\bb{1}(1,\du),\rho>^2=\\
 \int_0^1 t\left(\int_{-1}^t\du_1(s)\ \dd s\right)^2\ \dd t  - \int_{-1}^0 t\left(\int_{-1}^t\du_2(s)\ \dd s\right)^2\ \dd t\ .
\end{align*}
Clearly the right-hand side is positively defined. We conclude that in the considered case inequality \eqref{eqn:inequality_full} holds with $\PP(1)[(\bb{1},\cc{1}),(\bb{1},\cc{1})]=-\Der^2_{q(1)}f[\bb{1},\bb{1}]-\frac 12 \<\bb{1},\rho>^2$. As the the dependence on $\cc{1}$ is trivial, $a_2(1)=0$, and  hence we deal with a \emph{2-abnormal extremal} in the sense of Definition~\ref{def:2_normal_abnormal}. It is worth to mention that, by the results of \cite{Hakavuori_Donne_2018}, a corner curve cannot be a SR geodesic.

\section{Proof of Theorem~\ref{thm:optimal_deg_2}}
\label{sec:proof}

\subsection{The characteristic optimal control problem}\label{ssec:char_ocp}

Let us briefly sketch the content of this subsection. We would like to see Theorem~\ref{thm:optimal_deg_2} as an improvement on Lemma~\ref{lem:AS_index_zero}. If so, then the new part is the presence of the quadratic map $\PP(t)$ related with minimizing the value of $\<\bb{2}(t,\du),\varphi(t)>$ while fixing the values of $\bb{1}(t,\du)$ and $\cc{1}(t,\du)$. 

We shall formalize this minimization procedure by considering the value function $\costmin{\tau}{\cdot}$ of a certain optimal control problem (the \emph{characteristic control problem} of the SR trajectory). Giving a precise definition of this problem, as well as of underlying \emph{characteristic control system}, will conclude  Subsection~\ref{ssec:char_ocp}. It is worth to mention, that for technical reasons we will prefer to study curves $(\bb{1}(t,\du),\bb{2}(t,\du))$ in a special coordinate system -- \emph{adapted coordinates} -- introduced in \cite{MJ_BS_adapted}. 
After introducing thea above notions we will sketch the strategy of the proof at the beginning of the next Subsection~\ref{ssec:prop_min_2_jet}.

\paragraph{Technical preparations (adapted coordinates)}
In the proof of Theorem~\ref{thm:optimal_deg_2} we are, in particular, interested in the geometry of the set of pairs $(\bb{1}(t,\du),\bb{2}(t,\du))$, which represent the 2-jets of $s\mapsto \End{t_0,t_1}{}[u+s\cdot \du]$ at $s=0$. These objects are described by the system of ODEs \eqref{eqn:b1}--\eqref{eqn:b2} which, for us, would be convenient  to treat as a control system, steered by $\du\in \LL{t_0}{t_1}$. There are, however, two issues with such an approach. First of all, equations \eqref{eqn:b1}--\eqref{eqn:b2} are quite complicated, depending not only linearly on $\du$, but also on  the basic control $u$ by formulas that involve derivatives of vector fields $X_i$. Secondly, for each $t$ the pair $(\bb{1}(t,\du),\bb{2}(t,\du))$ belongs to a different space --  $\T_{q(t)}^2M$. It turns out that both complications can  be solved simultaneously by a suitable change of coordinates.  It will be given by a family of maps $\Psi^{(2)}(t):\T^2_{q(t)}M\lra \R^n\times\R^n$ constructed as follows \cite{MJ_BS_adapted}.

Let $u\in\LL{t_0}{t}$ be a control, $q(t)$ the related trajectory of \eqref{eqn:trajectory_u}, and choose a basis $(\psi_a)_{a=1.2,\hdots,n}$ of the cotangent space $\T^\ast_{q_0}M$. First define a 1-parameter family of linear maps $\lin{1}(t):\T_{q(t)}M\ra \R^n$ by setting
\begin{equation}\label{eqn:ev_lin_1}
    \begin{split}
&\dotlin{1}(t)[b]+\sum_i u_i(t)\cdot\lin{1}(t)\left[ \Der_{q(t)} X_i[b]\right]=0\qquad \text{for every $b\in\T_{q(t)}M$}\\
&\lin{1}(t_0)=(\psi_1,\hdots,\psi_n),
    \end{split}
\end{equation}
and a 1-parameter family of symmetric bi-linear maps $\lin{2}(t):\T_{q(t)}M\times \T_{q(t)}M\ra \R^n$ by setting
\begin{align*}
&\dotlin{2}(t)[b,b]+\sum_i u_i(t)\cdot\left\{\lin{1}(t)\left[\Der^2_{q(t)}X_i[b,b]\right] +2\lin{2}(t)\left[\Der_{q(t)} X_i[b],b\right]\right\}=0\qquad \text{for every $b\in\T_{q(t)}M$}\\
&\lin{2}(t_0)=(0,\hdots,0)\ .
\end{align*}
Now the map $\Psi^{(2)}(t)$ is defined as follows:
$$\Psi^{(2)}(t)(\bb{1},\bb{2})=\left(\lin{1}(t)\left[\bb{1}\right], \lin{1}(t)\left[\bb{2}\right]+\frac 1{2!}\lin{2}(t)[\bb{1},\bb{1}] \right)\ ,$$
where $(\bb{1},\bb{2})\in\T^2_{q(t)}M$. After \cite{MJ_BS_adapted} we call $\Psi^{(2)}(t)$ the \emph{transformation of adapted coordinates}. The name ``adapted'' means ``adapted to the geometry of the evolution equation \eqref{eqn:trajectory_u}'' in the sense that $\Psi^{(2)}(t)$ acts on 2-jets in the same way as the flow of the control vector field $X_u(t,q)=\sum_i u_i(t)X_i(q)$.  In particular, this manifests itself by the fact that passing to adapted coordinates simplifies the form of equations \eqref{eqn:b1}--\eqref{eqn:b2}. 

\begin{lemma}[\cite{MJ_BS_adapted}]
\label{lem:adapted}
For every $t\in[t_0,t_1]$ the map $\lin{1}(t)$ is an isomorphism between the fibre $\T_{q(t)}M$ and $\R^n$. By the same token, for every $t\in[t_0,t_1]$ the map $\Psi^{(2)}(t)$ is an isomorphism between the fibre $\T^2_{q(t)}M$ and $\R^n\times\R^n$. 
\smallskip

Given $\du\in\LL{t_0}{t_1}$ set
\begin{equation}\label{eqn:def_q12_du}\left(\q{1}(t,\du),\q{2}(t,\du)\right):=\Psi^{(2)}(t)\left(\bb{1}(t,\du),\bb{2}(t,\du)\right)\ .
\end{equation}
Then the curve $(\q{1}(t,\du),\q{2}(t,\du))$ is subject to the following evolution equations
\begin{align}
\label{eqn:ev_q1}
\dotq{1}(t,\du)&=\sum_i \du_i(t)\cdot  \Y{i}{1}(t)\\
\label{eqn:ev_q2}
\dotq{2}(t,\du)&=\sum_i \du_i(t)\cdot \Y{i}{2}(t,\q{1}(t,\du)) 
\ ,
\end{align}
where, after denoting the inverse of $\lin{1}(t)$ by $A(t)$, we have
$$\Y{i}{1}(t):=\lin{1}(t)\left[X_i\big|_{q(t)}\right]\quad\text{and}\quad   \Y{i}{2}(t,\q{1}) :=\lin{1}(t)\left[\Der_{q(t)}X_i[A(t) \q{1}]\right]+\lin{2}(t)\left[X_i, A(t)q^{(1)}\right]\ .$$
\medskip 

 Finally, if the control $u$ is of class $C^r$, with $r=0_-,1_-,1,2,3,\hdots$, then $\lin{1}(t)$, the inverse of $\lin{1}(t)$,  and $\lin{2}(t)$ are of class $C^{r+1}$ with respect to $t$.  

\end{lemma}
The above result can be checked by a direct calculation. The $C^{r+1}$-regularity follows easily from the evolution equations for $\lin{1}(t)$ and $\lin{2}(t)$. For details and discussion, we refer to the original publication \cite{MJ_BS_adapted}. 


\paragraph{The characteristic control system and its reachable sets.}
Since maps $\lin{1}(t)$ constructed in Lemma~\ref{lem:adapted} are reversible, a pair $(\q{1}(t,\du),\cc{1}(t,\du))$ contains the same information as a pair $(\bb{1}(t,\du),\cc{1}(t,\du))$, describing the first term of the 2-jet expansion of the extended end-point map $\END{t_0,t}{q_0}$. Thanks to the simple form of equation \eqref{eqn:ev_q1} the evolution of the former pair is, however, much easier to analyse. Therefore to understand the geometry of $\END{t_0,t}{q_0}$, we shall now study the pairs $(\q{1}(t,\du),\cc{1}(t,\du))\in\R^n\times\R$. In accordance with Rem.~\ref{rem:notation} denote $\qq{1}(t,\du):=(\q{1}(t,\du),\cc{1}(t,\du))$ and $\zero:=(0_n,0)\in\R^n\times\R$. 

\begin{definition}
\label{def:char_CS}
By a \emph{characteristic control system} of the trajectory $q(t)$, with $t\in[t_0,t_1]$, we will understand the following control system in $\R^{n+1}\ni \qq{1}=(\q{1},\cc{1})$:
\begin{equation}
\label{eqn:CS_deg_1}
\tag{$\Lambda_1$}
    \begin{split}
        \dotq{1}(t)&=\sum_i \du_i(t)\cdot  \Y{i}{1}(t)\\
        \dot {\cc{1}}(t)&=\sum_{i=1}^k \du_i(t)\cdot u_i(t)
    \end{split}
\end{equation}
Here $\Y{i}{1}(t)=\lin{1}(t)\left[X_i\big|_{q(t)}\right]$, and we treat $\du \in \LL{t_0}{t_1}$ as a control. 
\smallskip

By $\RR(t_0,\tau)$ we shall denote \emph{time-$\tau$ reachable set of the system \eqref{eqn:CS_deg_1}} for the initial conditions $\qq{1}(t_0)=\zero$. That is
\begin{align*}\RR(t_0,\tau):=\{\qq{1}(\tau)\that &\text{$\qq{1}(t)$ satisfy \eqref{eqn:CS_deg_1}}\text{ for some $\du\in \LL{t_0}{\tau}$ with $\qq{1}(t_0)=\zero$.} \}
\end{align*}
\smallskip

We will need also another notion related with the characteristic control system. Choose $\tau\in [t_0,t_1]$. We will say that a vector $\vv\in\R^{n+1}$ is \emph{controllable to zero from time $\tau$} if there exists a trajectory $\qq{1}(t)$ of the control system \eqref{eqn:CS_deg_1} with the initial condition $\qq{1}(\tau)=\vv$ such that $\qq{1}(t_1)=\zero$. 
\end{definition}

\paragraph{Pontryagin covector in adapted coordinates}
Observe that equation \eqref{eqn:ev_lin_1} defining the map $\lin{1}(t)$ looks exactly the same, as the evolution equation of the Pontryagin covector \eqref{eqn:pontryagin_covector}. Therefore it is easy to translate  conditions of being a Pontryagin covector to the setting of adapted coordinates. 

\begin{proposition}\label{prop:pontryagin_in_adapted}
A curve $\varphi(t)\in \T_{q(t)} M$ is a Pontryagin covector (as defined by conditions \eqref{eqn:pontryagin_covector}) if and only if there exists a covector $\psi_0\in (\R^n)^\ast$ such that 
\begin{align*}&\varphi(t)=\lin{1}(t)^\ast\psi_0\quad\text{and}\\
&\<\Y{i}{1}(t),\psi_0>=0\quad\text{for every $i=1,2,\hdots,k$ and $t\in[t_0,t_1]$.} 
\end{align*}
\smallskip

Further if $\varphi(t)$ is as above and if the pairs $(\bb{1}(t,\du),\bb{2}(t,\du)$ and $(\q{1}(t,\du),\q{2}(t,\du))$ are related by \eqref{eqn:def_q12_du} we have 
\begin{align*}
   \<\bb{1}(t,\du),\varphi(t)>=&\<\q{1}(t,\du),\psi_0>\quad\text{and}\\
   \<\bb{2}(t,\du),\varphi(t)>=&\<\q{2}(t,\du),\psi_0>+\frac 12\<\lin{2}(t)[\bb{1}(t,\du),\bb{1}(t,\du)],\lin{1}(t)^\ast \psi_0>=\\
   &\<\q{2}(t,\du),\psi_0>+\frac 12\<A(t)\lin{2}(t)[A(t)\q{1}(t,\du),A(t)\q{1}(t,\du)],\psi_0>\ ,
\end{align*}
where $A(t)$ denotes the inverse of $\lin{1}(t)$.
\end{proposition}

Recall that in  Lemmas~\ref{lem:AS_index} and \ref{lem:AS_index_zero} we study a pairing $\<\bb{2}(t,\du),\varphi(t)>$ for a Pontryagin covector $\varphi(t)$.

 As we see, when restricting our attention to $\du\in \ker\END{t_0,t}{}$ (as is the case in the discussed Lemmas) the pairing $\<\bb{2}(t,\du),\varphi(t)>$ equals to $\<\q{2}(t,\du),\psi_0>$. Without such a restriction both pairings are equal up to a term quadratic in $\q{1}(t,\du)$.  

\paragraph{The characteristic optimal control problem}
Motivated by Lemmas~\ref{lem:AS_index} and \ref{lem:AS_index_zero} and our observations from the previous paragraph we define the following optimization problem for trajectories of the characteristic control system \eqref{eqn:CS_deg_1}.  

        \begin{definition}\label{def:char_cs} Let $q(t)$, with $t\in [t_0,t_1]$, be a SR trajectory corresponding to the control $u\in \LL{t_0}{t_1}$. Choose $\psi_0\in(\R^{n})^\ast$, $\tau\in[t_0,t_1]$ and $\vv\in\RR(t_0,\tau)$. By a \emph{characteristic optimal control problem} (\emph{characteristic OCP}, in short) of the trajectory $q(t)$ we understand a problem of  minimizing the functional 
        $$\cost{t_0,\tau}{\du}:=\<\q{2}(\tau,\du),\psi_0>$$
        in the class of  controls $\du\in \LL{t_0}{\tau}$ such that $\qq{1}(t_0,\du)=\zero$ and $\qq{1}(\tau,\du)=\vv$. (In other words, $\du$ steers \eqref{eqn:CS_deg_1} from $\zero$ to $\vv$ on $[t_0,\tau]$.) \smallskip
         Note that  by \eqref{eqn:ev_q2}
            $$\cost{t_0,\tau}{\du}=\int_{t_0}^\tau \sum_i \du_i(t)\cdot \<\Y{i}{2}(t,\q{1}(t,\du)),\psi_0 >\ \dd t
            \ , $$
        hence we are actually solving a Larange-type optimal control problem.\medskip
        
        \end{definition}

Putting for the moment the question of the existence of solutions aside, define \emph{the minimal 2-jet} to be the Bellman value function related with this problem 
$$\costmin{t_0,\tau}{\vv}:=\inf \{\cost{t_0,\tau}{\du}\that \text{$\du\in\LL{t_0}{\tau}$ is a control steering \eqref{eqn:CS_deg_1} from $\zero$ to $\vv$ on  $[t_0,\tau]$} \}\ .$$

We admit a possibility that $\costmin{t_0,\tau}{\vv}$ 
equals to $-\infty$.  \medskip

Finally, note that the cost functional $\cost{t_0,\tau}{\cdot}$ is  quadratic, in the sense that $\cost{t_0,\tau}{\du}=\blincost{t_0,\tau}{\du}{\du}$, where $\blincost{\tau}{\cdot}{\cdot}:\LL{t_0}{t_1}\times\LL{t_0}{t_1}\lra \R$ is a symmetric bilinear map defined by 
\begin{align*}2\ \blincost{\tau}{\du}{\dv}:=
\int_{t_0}^\tau \sum_i  \<\du_i(t)\cdot \Y{i}{2}(t,\q{1}(t,\dv))+\dv_i(t)\cdot \Y{i}{2}(t,\q{1}(t,\du)),\psi_0 >\ \dd t \ .
\end{align*}

\subsection{Properties of the minimal 2-jet}\label{ssec:prop_min_2_jet}

\paragraph{Strategy of the proof}

In the remaining part of this section, we are going to investigate properties of the minimal 2-jet $\costmin{t_0,\tau}{\cdot}$ under the assumption that $q(t)$ is an abnormal minimizing SR geodesic. A crucial part in the proof will be showing Lemma~\ref{lem:final_division} stating that $\costmin{t_0,\tau}{\vv}$ is finite for all possible $\tau$'s and $\vv$'s, provided that the underlying SR trajectory is suitably divided into pieces. This will be done in several steps as listed below:
\begin{itemize}
    \item First, we will prove Lemma~\ref{lem:AS_index_zero_1} and the related Corollary~\ref{cor:AS_index_zero_1} stating that for an abnormal minimizing SR geodesic the value function $\costmin{t_0,\tau}{\zero}$ is finite after a suitable division of the trajectory. This is basically a reformulation of Lemma~\ref{lem:AS_index_zero}. 
    \item Secondly, in Lemma~\ref{lem:fundamental} and Corollary~\ref{cor:fundamental}, we show that if an element $\vv$ is controllable to zero from time $\tau$ then $\costmin{t_0,\tau}{\vv}$ is finite
    \item Lastly, in Lemma~\ref{lem:control_to_zero} we show that, after a suitable division of the trajectory, every element $\vv$ of the reachable set of \eqref{eqn:CS_deg_1} is controllable to zero.
\end{itemize} 
With all the above points to conclude the proof it is enough to prove Theorem~\ref{thm:properties_of_Q} which states that if $\costmin{t_0,\tau}{\vv}$ is finite fpr every possible $\vv$, then it is quadratic in $\vv$. 

\paragraph{Step 1 -- setting the AS index to zero}

To simplify the notation let us denote the kernel of the first derivative of the extended end-point map $\END{t_0,t}{}$ at $u$ by
$$\U{0}(t_0,t):=\ker \Der_u\END{t_0,t}{}=\{\du\in\LL{t_0}{t_1}\that \bb{1}(t,\du)=0, \cc{1}(t,\du)=0\}\ .$$
Recall that in Lemma~\ref{lem:AS_index} we study quadratic maps 
$$\G{t_0,t}:\U{0}(t_0,t)\lra\R\qquad \G{t_0,t}:\du\longmapsto \<\bb{2}(t,\du),\varphi(t)>\overset{\text{Prop.~\ref{prop:pontryagin_in_adapted}}}=\<\q{2}(t,\du),\psi_0>\ ,$$
where $\varphi(t)$ is a certain Pontryagin covector. In the last equality we use the notation from Proposition~\ref{prop:pontryagin_in_adapted}. Given a covector $\psi_0$ as above we may repeat the above  construction on subintervals  $[t_0',t_1']\subset[t_0,t_1]$ to produce analogous objects: $\U{0}(t_0',t_1')$ and  $\G{t_0',t_1'}$, respectively. Actually, the extension of an $L^\infty$-map $\du:[t_0',t_1']\ra\R^k$ by zero on $[t_0,t_0')\cup(t_1',t_1]$ gives us a canonical inclusion $\LL{t_0'}{t_1'}\subset\LL{t_0}{t_1}$, which allows to interpret each $\U{0}(t_0',t_1')$ as a subset of $\U{0}(t_0,t_1)$ and, consequently, $\G{t_0',t_1'}$ as a restriction of $\G{t_0,t_1}$ to $\U{0}(t_0',t_1')$. With this simple observation the below result is now easy to prove. It is an obvious reformulation of Lemma~\ref{lem:AS_index_zero}.

        \begin{lemma}\label{lem:AS_index_zero_1} Under the assumptions of Lemma~\ref{lem:AS_index}, there exist a non-zero covector $\psi_0\in(\R^n)^\ast$, and at most $r+1$ points $\tau_0=t_0\leq \tau_1\leq\tau_2\leq\hdots\leq \tau_{r'}=t_1$ such that for each subinterval $[t_0',t_1']\subset(\tau_j,\tau_{j+1}]$, the quadratic map
        $$\G{t_0',t_1'}:\U{0}(t_0',t_1')\lra\R$$
        is non-negatively defined. 
        \end{lemma}
\begin{proof} We take a Pontryagin covector $\varphi(t)$ as in the assertion of Lemma~\ref{lem:AS_index} and let $\psi_0$ be the corresponding element of $(\R^n)^\ast$ given by Proposition~\ref{prop:pontryagin_in_adapted}. Using this element we define the map $\G{t_0,t}$ as above. 

Since for $t<t'$ we have a canonical inclusion $\U{0}(t_0,t)\subset \U{0}(t_0,t')$, the integer-valued function 
$$I:[t_0,t_1]\ni t\longmapsto \text{the negative index of $\G{t_0,t_1}$ on $\U{0}(t_0,t)$} $$
is non-decreasing. Further, as being negatively defined on a subspace is an open property, $I$ is left semi-continuous. By Lemma~\ref{lem:AS_index} the map $I$ has at most $r-1$ points of growth. Together with $t_0$ and $t_1$ this gives us at most $r+1$ points $\tau_0=t_0\leq \tau_1\leq\tau_2\leq\hdots\leq \tau_{r'}=t_1$, with the property that $I$ is constant on $(\tau_j,\tau_{j+1}]$ Take now any $[t_0',t_1']\subset (\tau_j,\tau_{j+1}]$ and assume that $\G{t_0,t_1}$ is not non-negatively defined on $\U{0}(t_0',t_1')$. For $\tau_j+\eps<t_0'$ the sets $\U{0}(t_0,\tau_j+\eps)$ and $\U{0}(t_0',t_1')$ are $\G{t_0,t_1}$-orthogonal (consisting of controls with disjoints supports, for which $\q{1}$ vanishes at the end-points), it follows that the negative index of $\G{t_0,t_1}$ on $\U{0}(t_0,\tau_j+\eps)\cup\U{0}(t_0',t_1')\subset\U{0}(t_0,t_1')$ is greater than the negative index of $\G{t_0,t_1}$ on $\U{0}(t_0,\tau_j+\eps)$. This is, however, impossible as the index function $I$ had no growth-points between $\tau_j+\eps$ and $\tau_{j+1}\geq t_1'$.
\end{proof}

From the above we easily get
\begin{corollary}\label{cor:AS_index_zero_1} Under the assumptions and notation of Lemma~\ref{lem:AS_index_zero_1}, consider the restriction of an abnormal minimizing SR geodesic $q(t)$ to a subinterval $[t_0',t_1']\subset(\tau_j,\tau_{j+1}]$. Then the related value function $\costmin{t_0',t}{\zero}=0$ for every $t\in[t_0',t_1']$.
\end{corollary}

In this context it is convenient to state the following simple fact.

\begin{proposition}
        \label{fact:Q_zero_is_zero}
        If $\costmin{t_0,t}{\zero}$ is finite then it equals to $0$.
        \end{proposition}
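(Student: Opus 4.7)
The plan is to exploit two structural facts: the characteristic control system \eqref{eqn:CS_deg_1} is linear in $\du$, and the cost functional $\cost{t}{\cdot}$ is (homogeneous) quadratic in $\du$ as noted just before Lem.~\ref{lem:fundamental}. Once these are in place, the conclusion follows by a rescaling argument.

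First I would establish the upper bound $\costmin{t}{\zero}\leq 0$. The trivial control $\du\equiv 0$ obviously steers \eqref{eqn:CS_deg_1} from $\zero$ to $\zero$ on $[t_0,t]$ and satisfies $\cost{t}{0}=0$, so this direction is immediate from the definition of the infimum.

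For the matching lower bound, I would observe that the set of admissible controls for the problem defining $\costmin{t}{\zero}$ is linear: if $\du$ steers $\zero$ to $\zero$, then so does $\lambda\cdot\du$ for every $\lambda\in\R$. Indeed, by linearity of \eqref{eqn:CS_deg_1} (both $\q{1}$ and $\cc{1}$ depend linearly on the control $\du$ when started from $\zero$), we have $\qq{1}(t,\lambda\du)=\lambda\cdot\qq{1}(t,\du)=\zero$. On the other hand, by the bilinear description $\cost{t}{\du}=\blincost{t}{\du}{\du}$, one has $\cost{t}{\lambda\du}=\lambda^2\cdot\cost{t}{\du}$. Therefore, if there were any admissible $\du$ with $\cost{t}{\du}<0$, rescaling by $\lambda\to\infty$ would produce admissible controls of arbitrarily negative cost, forcing $\costmin{t}{\zero}=-\infty$ and contradicting the finiteness assumption. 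Hence $\cost{t}{\du}\geq 0$ for every admissible $\du$, which gives $\costmin{t}{\zero}\geq 0$.

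Combining the two bounds yields $\costmin{t}{\zero}=0$. There is no serious obstacle here; the only thing to be careful about is to verify both the linearity of the admissibility condition $\qq{1}(t,\du)=\zero$ (which in turn relies on the linearity of both equations in \eqref{eqn:CS_deg_1} together with the zero initial datum) and the exact quadratic homogeneity $\cost{t}{\lambda\du}=\lambda^2\cost{t}{\du}$ inherited from $\cost{t}{\du}=\blincost{t}{\du}{\du}$.
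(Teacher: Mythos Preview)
Your proof is correct and follows essentially the same approach as the paper's: the trivial control gives the upper bound $\costmin{t}{\zero}\leq 0$, and the linearity of the admissible set $\U{0}(t_0,t)$ together with the quadratic homogeneity $\cost{t}{\lambda\du}=\lambda^2\cost{t}{\du}$ yields the lower bound via a rescaling contradiction.
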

        \begin{proof}
        Of course taking a trivial control $\du\equiv 0$ gives us $\cost{t_0,t}{\du}=0$, and hence $\costmin{t_0,t}{\zero}\leq 0$. Assume that there exists some $\dv\in \U{0}(t_0,t)$ such that $\cost{t_0,t}{\dv}=-c<0$. Note that, since $\U{0}(t_0,t)$ is a linear space, and  since the functional $\cost{t_0,t}{\cdot}$ is quadratic, for every $\lambda\in \R$ we have $\lambda\cdot\dv\in \U{0}(t_0,t)$ and $\cost{t_0,t}{\lambda\cdot\dv}=-\lambda^2 c$. Taking $\lambda\to \infty$ gives us $\costmin{t_0,t}{\zero}=-\infty$. We get a contradiction with the assumption that $\costmin{t_0,t}{\zero}$ was finite. 
        \end{proof}

\paragraph{Step 2 -- Bellman-like inequalities} The minimal 2-jet $\costmin{t_0,t}{\cdot}$ is defined as the Bellman's value function for an optimal control problem. Thus it should not be surprising that it shares its basic properties.
Namely, it turns out, that for a fixed control $\du\in \LL{t_0}{t_1}$, the distance between $\cost{t_0,t}{\du}$ and the infimum value $\costmin{t_0,t}{\vv=\qq{1}(t,\du)}$  is a non-decreasing function of time $t$. This property  will be of crucial importance later. 

        \begin{lemma}[Bellman's inequality]
        \label{lem:fundamental}
        For every $\du\in \LL{t_0}{t_1}$, function 
        $$t\longmapsto \cost{t_0,t}{\du}-\costmin{t_0,t}{\qq{1}(t,\du)}$$
        is non-decreasing. 
        \end{lemma}
\begin{proof}
Fix a control $\du\in\LL{t_0}{t_1}$, and assume for simplicity that $\cost{t_0,t}{\du}-\costmin{t_0,t}{\qq{1}(t,\du)}=5$, while $\cost{t_0,t'}{\du}-\costmin{t_0,t'}{\qq{1}(t',\du)}=1$ for some $t_0\leq t<t'\leq t_1$. Since $\costmin{t_0,t}{\cdot}$ is defined as the infimum, there is some control $\du'\in\LL{t_0}{t}$ such that $\qq{1}(t,\du)=\qq{1}(t,\du')$, while $\cost{t_0,t}{\du'}-\costmin{t_0,t}{\qq{1}(t,\du)}<5-1=4$. Thus
\begin{equation}
    \label{eqn:inequality}
    \cost{t_0,t}{\du'}-\cost{t_0,t}{\du}<-1\ .
\end{equation}
Now consider a concatenated control $\widetilde{\du}:=\du'|_{[t_0,t]}\circ\du|_{[t,t']}\in\LL{t_0}{t'}$. Obviously, $\qq{1}(t,\widetilde{\du})=\qq{1}(t,\du')=\qq{1}(t,\du)$; and $\qq{1}(t',\widetilde{\du})=\qq{1}(t',\du)$ as both controls $\widetilde{\du}$ and $\du$ equal on $[t,t']$ and share the same initial point $\qq{1}(t,\du)$. On the other hand,
\begin{align*}
\cost{t_0,t'}{\widetilde{\du}}=&\cost{t_0,t}{\du'}+\int_t^{t'}\sum \du_i(\tau)\cdot \<\Y{i}{2}(\tau,\q{1}(\tau,\du)),\psi_0>\ \dd\tau=\\
&\cost{t_0,t}{\du'}+\cost{t_0,t'}{\du}-\cost{t_0,t}{\du}\overset{\eqref{eqn:inequality}}<\cost{t_0,t'}{\du}-1\ .
\end{align*}
It follows that 
\begin{align*}\cost{t_0,t'}{\widetilde{\du}}-\costmin{t_0,t'}{\qq{1}(t',\widetilde{\du})}=&\cost{t_0,t'}{\widetilde{\du}}-\costmin{t_0,t'}{\qq{1}(t',\du)}\\
<&\cost{t_0,t'}{\du}-\costmin{t_0,t'}{\qq{1}(t',\du)}-1=0\ .
\end{align*}
This is a contradiction, with the definition of $\costmin{t_0,t'}{\cdot}$ as the infimum. 
\end{proof}
Combining the above with the results from the previous step of the proof we get a result that states that if we can steer an element $\vv\in\R^{n+1}$ to zero, then the minimal 2-jet is finite at this point. 

\begin{corollary}\label{cor:fundamental}Under the assumptions and notation of Lemma~\ref{lem:AS_index_zero_1}, consider the restriction of an abnormal minimizing SR geodesic $q(t)$ to a subinterval $[t_0',t_1']\subset(\tau_j,\tau_{j+1}]$.  Let $\vv\in\RR(t_0',\tau)\subset \R^{n+1}$ be a vector controllable to zero from time $\tau\in[t_0',t_1']$ (in the sense of Definition~\ref{def:char_CS}). Then the minimal 2-jet $\costmin{t_0',\tau}{\vv}$ is finite.
\end{corollary}
\begin{proof}
Let $\du_1\in \LL{t_0'}{\tau}$ be a control steering \eqref{eqn:CS_deg_1} from $\zero$ to $\vv$ (it exists, since $\vv$ belongs to the reachable set $\RR(t_0',\tau)$), and let $\du_2\in\LL{\tau}{t_1'}$ be a control steering \eqref{eqn:CS_deg_1} from $\vv$ to $\zero$. Consider the concatenated control $\du=\du_1|_{[t_0',\tau)}\circ\du_2|_{[\tau,t_1']}$. The related trajectory $\qq{1}(t,\du)$ satisfies $\qq{1}(\tau,\du)=\vv$ and $\qq{1}(t_1',\du)=\zero$. By the results of Lemma~\ref{lem:fundamental} we have
\begin{align*}
    \cost{t_0',\tau}{\du}-\costmin{t_0',\tau}{\vv=\qq{1}(\tau,\du)}\leq &\cost{t_0',t_1'}{\du}-\costmin{t_0',t_1'}{\zero=\qq{1}(t_1',\du)}\overset{\text{Prop.~\ref{fact:Q_zero_is_zero}}}=\\
    &\cost{t_0',t_1'}{\du}-0\ ,
\end{align*}
hence $\costmin{t_0',\tau}{\vv}$ is bounded from below. 
\end{proof}

\paragraph{Step 3 -- controllability to zero}
Now we prove that, after a suitable division of a SR trajectory (not necessarily an abnormal geodesic), the characteristic control system \eqref{eqn:CS_deg_1} is controllable to zero from every point of its reachable set. We start with a few simple properties of the reachable sets.  

 \begin{proposition}
\label{prop:reach_set}
Consider a SR trajectory $q(t)$, with $t\in[t_0,t_1]$, corresponding to a control $u\in\LL{t_0}{t_1}$. The reachable sets $\RR(t_0,\tau)$ of the related characteristic control system have the following properties:
\begin{enumerate}[(i)]
    \item for each  $\tau\in (t_0,t_1]$, the set $\RR(t_0,\tau)$ is a linear subspace of $\R^{n+1}$;
    \item\label{point:2} if $\tau<\tau'$ then $\RR(t_0,\tau)\subseteq\RR(t_0,\tau')$;
    \item there exists $t_1'\in(t_0,t_1)$ such that $\RR(t_0,\tau)=\RR(t_0,t_1)$ for every $\tau\in(t_1',t_1]$.
\end{enumerate}
\end{proposition} 
\begin{proof}
The first point is obvious as the system is linear. So is the second, since each control $u$ defined on $[t_0,\tau]$ can be extended to a control on $[t_0,\tau']$ by setting $u(t)=0$ on $(\tau,\tau']$.\smallskip

The third point follows from \eqref{point:2} and the fact that the monotonous integer-valued function $\tau\mapsto \dim \RR(t_0,\tau)$ is left semi-continuous. Indeed, assume that for some $\tau_0\in [t_0,t_1]$ we have $\dim \RR(t_0,\tau_0)>\dim \RR(t_0,\tau)$ for all $\tau<\tau_0$. If this is the case then there exist a covector $\covector\in(\R^{n+1})^\ast$ annihilating $\RR(t_0,\tau)$ for every $\tau< t_0$, but not-vanishing on $\RR(t_0,\tau_0)$. If so, choose a control $\du_0$ such that 
$$\<\covector, \qq{1}(\tau_0,\du_0)>\neq 0\ .$$
The map $\tau\mapsto \<\covector, \qq{1}(\tau,\du_0)>$ is, however, defined by an integral of a MB function over $[t_0,\tau]$ and thus is continuous with respect to $
\tau$, so necessarily $\<\covector, \qq{1}(\tau,\du_0)>\neq 0$  for some $\tau<\tau_0$. On the other hand, $\qq{1}(\tau,\du_0)\in \RR(t_0,\tau)\subset \ker \covector$. The contradiction ends the proof. 
\end{proof}

\begin{center}
\includegraphics[scale=0.6]{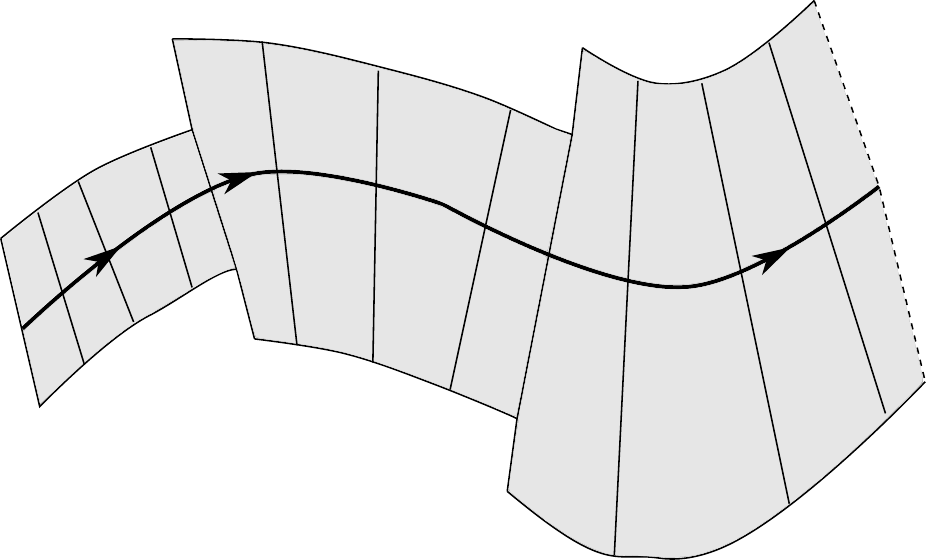}
\put(-300,-25){The dimension of the reachable set $\RR(t_0,t)$ grows along the trajectory $q(t)$.}
\put(-65,90){$q(t)$}
\put(-65,165){$\RR(t_0,t)$}
\end{center}

It turns out that after cutting the trajectory to a finite number of pieces all elements of the reachable sets are controllable to zero.

\begin{lemma}\label{lem:control_to_zero} Consider a SR trajectory $q(t)$, with $t\in[t_0,t_1]$, corresponding to a control $u\in\LL{t_0}{t_1}$. There exist at most $n+1$-points $t_0=\tau_0<\tau_1<\hdots< \tau_s=t_1$ such that for each $\tau\in [\tau_i,\tau_{i+1})$ all points of the reachable set $\RR(\tau_i,\tau)$ of the related characteristic control system are controllable to zero from time $\tau$.
\end{lemma}
\begin{proof}
Denote by $(\overline{\Lambda_1})$ the control system \eqref{eqn:CS_deg_1} with reversed time (i.e we control from $t_1$ to $t_0$), and let $\bm{\mathcal{R}}_{(\overline{\Lambda_1})}(t_1,\tau)$ denote its time $\tau$-reachable set. Obviously, a vector $\vv\in\R^{n+1}$ is controllable to zero from time $\tau\in[t_0,t_1]$ if and only if $\vv\in \bm{\mathcal{R}}_{(\overline{\Lambda_1})}(t_1,\tau)$. Observe that $\bm{\mathcal{R}}_{(\overline{\Lambda_1})}(t_1,t_0)=\RR(t_0,t_1)$ as both linear spaces are defined by the same integrals differing only by the direction of integration. System $(\overline{\Lambda_1})$ has properties analogous to those of \eqref{eqn:CS_deg_1} stated in Prop.~\ref{prop:reach_set}. Therefore there exists a maximal $\tau_1>t_0$ such that for every $\tau\in[t_0,\tau_1)$
$$\bm{\mathcal{R}}_{(\overline{\Lambda_1})}(t_1,\tau)=\bm{\mathcal{R}}_{(\overline{\Lambda_1})}(t_1,t_0)=\RR(t_0,t_1)\overset{\text{Prop.~\ref{prop:reach_set} \eqref{point:2}}}\supset \RR(t_0,\tau)\ ,$$
Proving the assertion for $i=0$. Now repeat the argument to the system on the new interval $[\tau_1,t_1]$. Note that by choosing $\tau_1$ maximal we have $\dim \RR(\tau_1,t_1)=\dim \bm{\mathcal{R}}_{(\overline{\Lambda_1})}(t_1,\tau_1)<\dim \bm{\mathcal{R}}_{(\overline{\Lambda_1})}(t_1,t_0)=\dim \RR(t_0,t_1)$, so the procedure will end in at most $n$ steps when the dimension of the reachable set falls to zero.  
\end{proof} 

\paragraph{Conclusion -- a suitable division of the SR geodesic}

Now we can combine the two divisions: the one described in Corollary~\ref{cor:AS_index_zero_1} which guarantees finiteness of the minimal 2-jet at zero, and the one from Lemma~\ref{lem:control_to_zero} which guarantees controllability to zero of any point of the reachable set, to get: 

   \begin{lemma}
        \label{lem:final_division} Assume that a SR trajectory $q(t)$, with $t\in[t_0,t_1]$, corresponding to a control $u\in\LL{t_0}{t_1}$, is a minimizing abnormal SR geodesic of co-rank $r$. Then there exists a non-zero covector $\psi_0\in(\R^n)^\ast$, and at most $r(n-r-k+1)+1$ points $\tau_0=t_0 < \tau_1<\hdots<\tau_N=t_1$ such that on each sub-interval $[t_0',t_1']\subset(\tau_i,\tau_{i+1})$ the related value function $\costmin{t_0',\tau}{\vv}$ is finite for every $\tau\in [t_0',t_1']$ and every $\vv\in\RR(t_0',\tau)$.  
        \end{lemma}
\begin{proof}
We first choose $\psi_0$ as in Lemma~\ref{lem:AS_index_zero_1} and, according to Corollary~\ref{cor:AS_index_zero_1}, divide $[t_0,t_1]$ into at most $r$ pieces of the form $(\tau_j,\tau_{j+1}]$ such that the  $\costmin{t_0',\tau_{j+1}}{\zero}=0$ for any $[t_0',\tau_{j+1}]\subset(\tau_j,\tau_{j+1}]$. Now by Lemma~\ref{lem:control_to_zero}  we divide $[t_0',\tau_{j+1}]$ into at most $n$ pieces to guarantee that on each of these all elements of the reachable set are controllable to zero. By Corollary~\ref{cor:fundamental} the minimal 2-jet
$\costmin{t_0',\tau}{\vv}$ is finite for all elements $\vv$ of the reachable set. Actually, it follows from the proof of Lemma~\ref{lem:control_to_zero} that those new division points are the points at which the dimension of the reachable set $\bm{\mathcal{R}}_{(\overline{\Lambda_1})}(\tau_{j+1},\tau)$ (for system \eqref{eqn:CS_deg_1} with reversed time) grows. Thus, first of all, these new division points do not depend on the choice of $t_0'$, hence in fact we have a division of the whole $[\tau_j,\tau_{j+1}]$. Secondly, the number of pieces of the new division is actually not greater than $n-r-k+1$, as the dimension of $\bm{\mathcal{R}}_{(\overline{\Lambda_1})}(\tau_{j+1},\tau)$ is at most $n+1-r$ (by the assumption that $q(t)$ is of co-rank $r$) and at least $k$ (as \eqref{eqn:CS_deg_1} is controlled by $k$ independent vector fields). This ends the proof. 
\end{proof}

\paragraph{Step 4 -- the shape of the minimal 2-jet}
\label{ssec:shape_min_2_jet}

Now let us address the situation when the minimal 2-jet $\costmin{t_0,\tau}{\vv}$ is finite for all possible choices of $\tau\in[t_0,t_1]$ and $\vv\in\RR(t_0,\tau)$. By the results of Lemma~\ref{lem:final_division} such a situation can be achieved by dividing a minimizing abnormal geodesics into a finite number of pieces.  Our goal is to derive the properties of $\costmin{t_0,\tau}{\vv}$ in such a situation. 

        \begin{theorem}
        \label{thm:properties_of_Q} 
        Consider the characteristic OCP of the trajectory $q(t)$ corresponding to the control $u\in \LL{t_0}{t_1}$. Assume that for some covector $\psi_0\in(\R^n)^\ast$ the map $\costmin{t_0,\tau}{\vv}$ is finite for all possible choices of $\tau\in[t_0,t_1]$ and $\vv\in\RR(t_0,\tau)$. Then 
        \begin{itemize}
            \item $\costmin{t_0,\tau}{\vv}$ is quadratic with respect to $\vv$.
            \item for a fixed $\vv$, function $\tau\mapsto \costmin{t_0,\tau}{\vv}$ is non-increasing. 
        \end{itemize}
        \end{theorem}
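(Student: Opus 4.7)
The argument splits cleanly into the two assertions. For the monotonicity part, the plan is trivial: for $\tau<\tau'$ and $\vv\in\RR(t_0,\tau)$, given any $\du\in\LL{t_0}{\tau}$ steering \eqref{eqn:CS_deg_1} from $\zero$ to $\vv$, extend it by zero on $[\tau,\tau']$ to get $\widetilde\du\in\LL{t_0}{\tau'}$. Since $\dotq{1}$ and $\dot\cc{1}$ vanish when $\du=0$, we still have $\qq{1}(\tau',\widetilde\du)=\vv$, and since the integrand in $\cost{\tau'}{\widetilde\du}$ also vanishes on $[\tau,\tau']$, we get $\cost{\tau'}{\widetilde\du}=\cost{\tau}{\du}$. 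Passing to the infimum gives $\costmin{\tau'}{\vv}\leq\costmin{\tau}{\vv}$.

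For the quadraticity, the plan is to establish, on the linear space $\RR(t_0,\tau)$, the three properties $\costmin{\tau}{\zero}=0$, $2$-homogeneity, and the parallelogram identity; it is standard that a function on a real vector space satisfying these three properties is a quadratic form (via $B(\vv,\vv'):=\tfrac14(\costmin{\tau}{\vv+\vv'}-\costmin{\tau}{\vv-\vv'})$). The first is precisely Proposition~\ref{fact:Q_zero_is_zero}. For homogeneity, observe that \eqref{eqn:CS_deg_1} is linear in $\du$, so if $\du$ steers $\zero$ to $\vv$, then $\lambda\du$ steers $\zero$ to $\lambda\vv$; since $\cost{\tau}{\cdot}=\blincost{\tau}{\cdot}{\cdot}$ is bilinear in the same variable, $\cost{\tau}{\lambda\du}=\lambda^2\cost{\tau}{\du}$. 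Taking the infimum yields $\costmin{\tau}{\lambda\vv}\leq\lambda^2\costmin{\tau}{\vv}$, and the reverse inequality for $\lambda\neq 0$ follows by applying the same bound to $\lambda^{-1}\cdot(\lambda\vv)$.

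For the parallelogram identity we combine two inf-manipulations that exploit bilinearity of $\blincost{\tau}{}{}$. First, if $\du,\du'$ steer $\zero$ to $\vv,\vv'$ respectively, then $\du\pm\du'$ steer $\zero$ to $\vv\pm\vv'$, and bilinearity gives $\cost{\tau}{\du+\du'}+\cost{\tau}{\du-\du'}=2\cost{\tau}{\du}+2\cost{\tau}{\du'}$; taking infima on both sides yields
\[
\costmin{\tau}{\vv+\vv'}+\costmin{\tau}{\vv-\vv'}\ \leq\ 2\costmin{\tau}{\vv}+2\costmin{\tau}{\vv'}.
\]
Conversely, if $\dv,\dw$ steer $\zero$ to $\vv+\vv',\vv-\vv'$ respectively, then $(\dv+\dw)/2$ and $(\dv-\dw)/2$ steer $\zero$ to $\vv$ and $\vv'$, and bilinearity gives $\cost{\tau}{(\dv+\dw)/2}+\cost{\tau}{(\dv-\dw)/2}=\tfrac12(\cost{\tau}{\dv}+\cost{\tau}{\dw})$; passing to the infimum in $\dv,\dw$ now produces the reverse inequality, so the parallelogram identity holds.

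The main subtle point, in my view, is the parallelogram identity: one has to take inf in opposite directions in the two halves of the argument, and each half depends on the linearity of \eqref{eqn:CS_deg_1} together with the bilinearity of the cost — the finiteness assumption on $\costmin{\tau}{\cdot}$ is what makes all these infima meaningful (so that the algebra never produces an indeterminate form $\infty-\infty$). Homogeneity and monotonicity are then essentially formal consequences of the linear/quadratic structure already present in the characteristic control system.
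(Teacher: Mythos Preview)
Your monotonicity argument is identical to the paper's. For quadraticity you take a genuinely different route: the paper fixes a basis of $\RR(t_0,\tau)$, picks $\eps$-optimal controls at the vertices $\pm\vv_j$, and shows that the quadratic map $F(\sum_j\lambda_j\vv_j):=\cost{\tau}{\sum_j\lambda_j\du_j}$ satisfies $\costmin{\tau}{\vv}\le F(\vv)\le\costmin{\tau}{\vv}+\eps$ on the spanned polytope; since a uniform limit of quadratic maps is quadratic, the conclusion follows. Your parallelogram approach is slicker, but it has a subtle gap.

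The claim that ``$f(0)=0$, $2$-homogeneity, and the parallelogram law imply $f$ is a quadratic form'' is \emph{false} over $\R$ without a regularity hypothesis. Take a nonzero derivation $\phi:\R\to\R$ (additive with $\phi(xy)=x\phi(y)+y\phi(x)$; such maps exist by a Hamel-basis argument, vanish on $\mathbb{Q}$, but are not identically zero) and set $f(s,t):=s\,\phi(t)-t\,\phi(s)$ on $\R^2$. Additivity of $\phi$ gives the parallelogram law, and the Leibniz rule gives $f(\lambda s,\lambda t)=\lambda^2 f(s,t)$; yet $f(1,0)=f(0,1)=f(1,1)=0$ while $f\not\equiv 0$, so $f$ is not quadratic. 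What your three conditions pin down is only $B(\lambda x,y)+B(x,\lambda y)=2\lambda\,B(x,y)$ for the polarization $B$, not the two summands separately.

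The fix is cheap here. Choose a basis $(\vv_j)$ of $\RR(t_0,\tau)$ and controls $\du_j$ steering to $\vv_j$; then $\sum_j c_j\vv_j\mapsto\cost{\tau}{\sum_j c_j\du_j}$ is a genuine quadratic form dominating $\costmin{\tau}{\cdot}$, so $\costmin{\tau}{\cdot}$ is bounded above on a neighbourhood of $\zero$. Combined with the identity $\costmin{\tau}{\vv+\lambda\vv'}+\costmin{\tau}{\vv-\lambda\vv'}=2\costmin{\tau}{\vv}+2\lambda^2\costmin{\tau}{\vv'}$ (which your inf-manipulation actually proves for every real $\lambda$, not just $\lambda=1$), one gets that $\lambda\mapsto B(\lambda\vv,\vv')$ is an additive function bounded on an interval, hence linear. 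Add one line to this effect and your argument is complete---and then it is arguably cleaner than the paper's approximation scheme.
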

\begin{proof}
To prove the first part of the assertion, fix $\tau\in[t_0,t_1]$ choose $\eps >0$, and choose any basis $\{\vv_1,\hdots,\vv_s\}$ of $\RR(t_0,\tau)$. For $i=1,2,\hdots,s$ denote $\vv_{s+i}:=-\vv_i$  and let  $P$ be the convex polygon with vertices $\pm\vv_i$. Since $\costmin{t_0,\tau}{\cdot}$ is defined as an infimum, for every $j=1,2,\hdots,2s$ we may find a control $\du_j\in\LL{t_0}{\tau}$ such that $\bbb{1}(\tau,\du_j)=\vv_j$ and $\cost{t_0,\tau}{\du_j}\leq \costmin{t_0,\tau}{\vv_j}+\eps$. Since for every $\dv\in\U{0}(t_0,\tau)$ we have
$$\cost{t_0,\tau}{\du_j+\dv}=\cost{t_0,\tau}{\du_j}+2\blincost{t_0,\tau}{\du_j}{\dv}+\cost{t_0,\tau}{\dv}\geq \costmin{t_0,\tau}{\vv_j=\bbb{1}(\tau,\du_j+\dv)}\ ,$$
the  condition $\cost{t_0,\tau}{\du_j}\leq \costmin{t_0,\tau}{\vv_j}+\eps$ is equivalent to the following one:
$$2\blincost{t_0,\tau}{\du_j}{\dv}+\cost{t_0,\tau}{\dv}\geq -\eps\qquad \text{for every $\dv\in\U{0}(t_0,\tau)$.}$$

Now for every convex combination $\sum_{j=1}^{2s}\lambda_j \cdot \du_j$ consider $F(\sum _j\lambda_j\cdot \vv_j):=\cost{t_0,\tau}{\sum_j\lambda_j \cdot \du_j}$. Obviously, as $\bbb{1}(\tau, \sum_j\lambda_j \cdot \du_j)=\sum_j\lambda_j\cdot \bbb{1}(\tau,\du_j)=\sum_j\lambda_j\cdot\vv_j$, and $\cost{t_0,\tau}{\cdot}$ is a quadratic function, then $F$ is a quadratic function on the polygon $P$. We further claim that $\costmin{t_0,\tau}{\vv}\leq F(\vv)\leq \costmin{r_0,\tau}{\vv}+\eps$. The first inequality is obvious as $\costmin{t_0,\tau}{\vv}$ is the infimum value of the functional $\cost{t_0,\tau}{\cdot}$ and $F(\vv)=\cost{t_0,\tau}{\sum_j\lambda_j \cdot \du_j}$ a particular one. The second inequality follows easily by an addition of analogous inequalities for particular $\du_j$'s. Namely for every $\dv\in\U{0}(t_0,\tau)$ we have
\begin{align*}
 2\blincost{t_0,\tau}{\sum_j\lambda_j\cdot \du_j}{\dv}+\cost{t_0,\tau}{\dv}= 2\blincost{t_0,\tau}{\sum_j\lambda_j\cdot \du_j}{\dv}+(\sum_j\lambda_j)\cdot\cost{t_0,\tau}{\dv}=\\
    \sum_{j=1}^{2s}\lambda_j\cdot \left(2\blincost{t_0,\tau}{\du_j}{\dv}+\cost{t_0,\tau}{\dv}\right)\geq \sum_{j=1}^{2s}\lambda_j\cdot(-\eps)=-\eps\ .
\end{align*}

At the end of the day we showed that the function $\costmin{t_0,\tau}{\vv}$ can be $\eps$-uniformly approximated by a quadratic function $F(\vv)$ on the polygon $P\ni\vv$. Note that a uniform limit of a sequence of quadratic maps is also a quadratic map, thus $\costmin{t_0,\tau}{\cdot}$ is quadratic on $P$. Since $P$ was arbitrary the assertion follows. \medskip

The second part of the assertion is straightforward. As extending a given control $\du\in\LL{t_0}{\tau}$ to a control $\du'\in\LL{t_0}{\tau'}$ by a concatenation with the zero control, i.e. $\du'=\du|_{[t_0,\tau]}\circ0|_{[\tau,\tau']}$, gives 
$\cost{t_0,\tau}{\du}=\cost{t_0,\tau'}{\du'}$, we have $\costmin{t_0,\tau}{\vv}\geq \costmin{t_0.\tau'}{\vv}$ for every $\tau'>\tau$ and $\vv\in\RR(t_0,\tau)$. 
\end{proof}

\paragraph{End of the proof}
Now we have all the pieces ready to conclude the proof of Theorem~\ref{thm:optimal_deg_2}. Namely, Lemma~\ref{lem:final_division} gives us a division of an abnormal SR geodesic $q(t)$ into a finite number of pieces, such that $\costmin{t_0',\tau}{\vv}$ is finite for every possible $\vv\in \RR(t_0',\tau)$, where $[t_0',t_1']\ni \tau$ lies inside a single piece. By the definition of the minimal 2-jet as an infimum, we know that 
$$\cost{t_0',\tau}{\du}-\costmin{t_0',\tau}{\qq{1}(\tau,\du)}\geq 0\ .$$
Further the map
\begin{align*}
    \tau\longmapsto\cost{t_0',\tau}{\du}-\costmin{t_0',\tau}{\qq{1}(\tau,\du)}=\<\qq{2}(t,\du),\psi_0>-\costmin{t_0',\tau}{\qq{1}(\tau,\du)}
\end{align*}
is non-increasing by Lemma~\ref{lem:fundamental}. What is more $\costmin{t_0',\tau}{\vv}$ is quadratic in $\vv$ by the results of Theorem~\ref{thm:properties_of_Q}. Finally, as the map $\tau\mapsto \costmin{t_0',\tau}{\vv}$ is monotone, the coefficients of this quadratic form (say, in the standard basis of $\R^{n+1}$) are differences of a finite number of monotone function. Consequently, they are of bounded variation.  
\smallskip

Summing up all the above observations, we have proven a version of Theorem~\ref{thm:optimal_deg_2}  with the requested quadratic map equal to $-\costmin{t_0',\tau}{\vv}$. The only problem is that the data is expressed in terms of adapted coordinates $(\qq{1}(t,\du),\qq{2}(t,\du))$, instead of jets $(\bbb{1}(t,\du), \bbb{2}(t,\du))$ as in the original formulation. 
To end the proof we have just to invert the transformation of adapted coordinates \eqref{eqn:def_q12_du}. Note that, by the results of Proposition~\ref{prop:pontryagin_in_adapted}, $\cost{t_0',\tau}{\qq{1}(\tau,\du)}=\<\q{2}(t,\du),\psi_0>$ equals to $\<\bb{2}(t,\du),\varphi(t)>$ up to a term quadratic in $\bb{1}(t,\du)$; and that the passage between $\bbb{1}(t,\du)$ and $\qq{1}(t,\du)$ is linear. Therefore the quadratic character of the minimal 2-jet will be preserved under this transformation. So, still by the results of Proposition~\ref{prop:pontryagin_in_adapted}, would be the bounded variation regularity of this map. \qed

\section{Supplementary results -- on solutions of the characteristic OCP}
\label{sec:suplementary}

In Ssec.~\ref{ssec:shape_min_2_jet} we were able to describe the shape of the minimal 2-jet $\costmin{t_0,t}{\vv}$, i.e the infimum of the set of values of $\cost{t_0,t}{\du}$ under the condition that $\qq{1}(t,\du)=\vv$. This, in turn, allowed us to prove Thm~\ref{thm:optimal_deg_2} describing sub-Riemannian optimality conditions of degree two. So far, however, we didn't address the question whether these infimum values are realized by some controls, i.e. if the characteristic OCP actually has solutions. In general this will not happen, as may be seen in the examples discussed in Subsection~\ref{ssec:examples}. The reason is that   $\costmin{t_0,t}{\vv}$ is defined as an infimum  taken over elements $\du$ belonging to an affine subspace of finite codimension in $\LL{t_0}{t_1}$. And we lack any sort of universal (weakly) compactness results in such situations. Below we will discuss some conditions that can guarantee the existence of the infimum. 
\smallskip

First of all, in Subsection~\ref{ssec:algebraic}, we study the characteristic OCP from the algebraic point of view, exploiting mostly the fact that the problem is quadratic in controls. It turns out that an important role for the problem in general is played by $\Sol{t_0,t}{(\zero)}$ -- the space of solutions of the characteristic OCP with the end-point $\zero$. We identify that one of the assumptions made in \cite{Monti_third_order_2020} implies that this space has finite codimension in the space of all controls $\LL{t_0}{t}$.  We formalize this as \emph{Monti's condition} and prove, in Lemma~\ref{lem:monti_gives_solutions}, that this finite codimension assumption implies that the characteristic OCP has solutions for an arbitrary end-point.   
\smallskip

If the existence of solutions of the characteristic OCP can be somehow assured, we can apply to them the Pontryagin Maximum Principle to get additional information about the underlying SR trajectory $q(t)$. Precisely this situation is considered in Subsection~\ref{ssec:pmp}. In particular, by combining our main result with the conclusions of the PMP, we prove  Lemma~\ref{lem:monti} stating that if Monti's conditions are assumed,  then the abnormal SR geodesics consist of a finite number of pieces which are either $C^2$ or 2-abnormal.  

\subsection{Algebraic approach to the characteristic OCP, Monti's condition}\label{ssec:algebraic}

\paragraph{Algebraic criteria for the existence of solutions}

We begin our discussion with a simple algebraic criteria for a control to be a solution of the characteristic OCP.

        \begin{proposition}
        \label{fact:sol_unbounded_bilin}
        Let $\du\in\LL{t_0}{t}$ be such that $\qq{1}(t,\du)=\vv$. Then $\du$ is a solution of the characteristic OCP if and only if 
        \begin{enumerate}[(i)]
            \item\label{cond:I} $\blincost{t_0,t}{\du}{\dv}= 0$ for every $\dv\in\U{0}(t_0,t)$ and
            \item\label{cond:II} $\costmin{t_0,t}{\zero}$ is finite (hence equal to zero by Prop.~\ref{fact:Q_zero_is_zero}).
        \end{enumerate}
        \end{proposition}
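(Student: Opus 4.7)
The plan is to exploit the quadratic structure of the cost functional on the affine space of admissible controls. First I would observe that since $\qq{1}(t,\cdot)$ is a linear function of its control argument (recall the system \eqref{eqn:CS_deg_1} is linear), the set of controls reaching $\vv$ at time $t$ is exactly the affine subspace $\du + \U{0}(t_0,t)$. So $\du$ is a solution of the characteristic OCP if and only if $\cost{t}{\du + \dw} \geq \cost{t}{\du}$ for every $\dw \in \U{0}(t_0,t)$. Using the bilinear decomposition $\cost{t}{\cdot} = \blincost{t}{\cdot}{\cdot}$ this difference expands as
\begin{equation*}
\cost{t}{\du + \dw} - \cost{t}{\du} = 2\,\blincost{t}{\du}{\dw} + \cost{t}{\dw}\ .
\end{equation*}

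For the necessity (forward) direction, assume $\du$ is a minimizer. Substituting $\dw \mapsto \lambda \dw$ for arbitrary $\lambda \in \R$ (which stays in $\U{0}(t_0,t)$ since this is a linear subspace) I would get $2\lambda\,\blincost{t}{\du}{\dw} + \lambda^2\,\cost{t}{\dw} \geq 0$ for all $\lambda$. Allowing $\lambda$ to take small values of either sign forces the linear term to vanish, i.e.~$\blincost{t}{\du}{\dw}=0$ for every $\dw\in\U{0}(t_0,t)$, which is \eqref{cond:I}. The remaining inequality then reads $\cost{t}{\dw} \geq 0$ on all of $\U{0}(t_0,t)$, so $\costmin{t}{\zero} \geq 0$; together with the trivial bound $\costmin{t}{\zero} \leq \cost{t}{\bm 0} = 0$ we obtain $\costmin{t}{\zero} = 0$, in particular finite, which is \eqref{cond:II}.

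For the sufficiency (backward) direction, assume \eqref{cond:I} and \eqref{cond:II}. By Proposition~\ref{fact:Q_zero_is_zero} the finiteness of $\costmin{t}{\zero}$ actually forces $\costmin{t}{\zero}=0$, hence $\cost{t}{\dw}\geq 0$ for every $\dw\in\U{0}(t_0,t)$. Combining this with \eqref{cond:I} in the expansion above yields
\begin{equation*}
\cost{t}{\du+\dw} - \cost{t}{\du} = 2\,\blincost{t}{\du}{\dw} + \cost{t}{\dw} = \cost{t}{\dw} \geq 0
\end{equation*}
for every $\dw\in\U{0}(t_0,t)$, so $\du$ realizes the infimum and is a solution.

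There is no real obstacle here: the argument is just the standard first- and second-order calculus of variations for a quadratic functional on an affine subspace, with the only mildly subtle point being the passage from ``$\cost{t}{\dw}\geq 0$ on $\U{0}(t_0,t)$'' to ``$\costmin{t}{\zero}$ is finite and equal to zero'' via Proposition~\ref{fact:Q_zero_is_zero}, which handles the dichotomy ``non-negative on a linear space, or $-\infty$ by homogeneity.''
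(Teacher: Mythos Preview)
Your proof is correct and follows essentially the same approach as the paper: both exploit the affine structure $\du+\U{0}(t_0,t)$ of admissible controls, expand the quadratic cost via the bilinear form, use the rescaling $\dw\mapsto\lambda\dw$ to kill the linear term and force non-negativity of $\cost{t}{\cdot}$ on $\U{0}(t_0,t)$, and invoke Proposition~\ref{fact:Q_zero_is_zero} for the value $\costmin{t}{\zero}=0$. The only cosmetic difference is that the paper deduces condition~\eqref{cond:II} first (from the mere finiteness of $\costmin{t}{\vv}$) and then \eqref{cond:I}, whereas you obtain \eqref{cond:I} first; the underlying computations are identical.
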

      
\begin{proof}
First let us prove that conditions \eqref{cond:I}--\eqref{cond:II} imply optimality of the control $\du$. 
Note that controls $\du,\du'\in\LL{t_0}{t}$ are such that $\qq{1}(t,\du)=\qq{1}(t,\du')=\vv$ if and only if $\dv:=\du'-\du$ belongs to $\U{0}(t_0,t)$. Now by our assumptions $\cost{t_0,t}{\dv}\geq\costmin{t_0,t}{\zero}=0$ and $\blincost{t_0,t}{\du}{\dv}= 0$.  Therefore we have
\begin{align*}\cost{t_0,t}{\du'}=\cost{t_0,t}{\du+\dv}=&\cost{t_0,t}{\du}+2\blincost{t_0,t}{\du}{\dv}+\cost{t_0,t}{\dv}=\\
&\cost{t_0,t}{\du}+\cost{t_0,t}{\dv}\geq \cost{t_0,t}{\du}\ ,
\end{align*}
 Since $\du'$ was arbitrary, $\du$ is indeed a solution of the characteristic OCP.
\medskip

Assume now that $\du$ is a solution of the characteristic OCP. This implies, in particular, that $\costmin{t_0,t}{\vv}$ is finite for $\vv=\qq{1}(t,\du)$. The latter condition suffices to prove \eqref{cond:II}.  Indeed, take any $\dv\in\U{0}(t_0,t)$ and any $\lambda\in\R$. Then
\begin{align*}
   \cost{t_0,t}{\du+\lambda\dv}=&\cost{t_0,t}{\du}+2\blincost{t_0,t}{\du}{\lambda\dv}+\cost{t_0,t}{\lambda\dv}=\\
   &\cost{t_0,t}{\du}+2\lambda\cdot\blincost{t_0,t}{\du}{\dv}+\lambda^2\cdot\cost{t_0,t}{\dv}\ . 
\end{align*}
Thus if $\cost{t_0,t}{\dv}$ would be negative, we could make $\cost{t_0,t}{\du+\lambda \dv}$ as low as possible by taking $\lambda$ big enough. Therefore the assumption of finteness of $\costmin{t_0,t}{\vv}$ requires $\cost{t_0,t}{\dv}\geq 0$ for every $\dv\in \U{0}(t_0,t)$. In consequence, $\costmin{t_0,t}{\zero}\geq 0$ and thus $\costmin{t_0,t}{\zero}=0$ by Prop.~\ref{fact:Q_zero_is_zero}.
\smallskip

To prove point \eqref{cond:I} take $\dv\in \U{0}(t_0,t)$. Then for each $\lambda \in\R$ the control $\du+\lambda \dv$ belongs to $\LL{t_0}{t}$, and ends at $\qq{1}(t,\du+\lambda\cdot \dv)=\vv$. Thus the map
$$\R\ni\lambda \longmapsto \cost{t_0,t}{\du+\lambda \dv}=\cost{t_0,t}{\du}+2\lambda\cdot\blincost{t_0,t}{\du}{\dv}+\lambda^2\cdot\cost{t_0,t}{\dv}$$
has a minimum at $\lambda=0$, which implies  $\blincost{t_0,t}{\du}{\dv}= 0$.
\end{proof}

\paragraph{Spaces of solution of the characteristic OCP} 
We conclude that the space of solutions of the characteristic OCP has a simple structure.

\begin{proposition}
The set of solutions of the characteristic OCP 
$$\Sol{t_0,t}{}:=\{\du\in \LL{t_0}{t_1}\that \cost{t_0,t}{\du}=\costmin{t_0,t}{\qq{1}(t,\du)}\}$$
is a vector space. Further the space of solutions satisfying $\qq{1}(t,\du)=\vv$ 
$$\Sol{t_0,t}{(\vv)}:=\{\du\in \LL{t_0}{t_1}\that \cost{t_0,t}{\du}=\costmin{t_0,t}{\vv}, \qq{1}(t,\du)=\vv\}$$
is an affine space modeled on 
$$\Sol{t_0,t}{(\zero)}=\{\du\in \LL{t_0}{t_1}\that \cost{t_0,t}{\du}=\costmin{t_0,t}{\zero}, \qq{1}(t,\du)=\zero\}\ .$$
\end{proposition}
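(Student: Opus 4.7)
The plan is to reduce both claims to the algebraic characterization in Proposition~\ref{fact:sol_unbounded_bilin}: $\du$ solves the characteristic OCP if and only if (i) $\blincost{t}{\du}{\dv}=0$ for every $\dv\in\U{0}(t_0,t)$, and (ii) $\costmin{t}{\zero}$ is finite. Condition (ii) is a property of the trajectory alone, independent of the specific control, so once we assume $\Sol{t}{}$ is non-empty it is automatic uniformly and the problem becomes purely a question about the kernel of a bilinear pairing.

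For the first assertion, given $\du_1,\du_2\in\Sol{t}{}$ and scalars $\alpha_1,\alpha_2\in\R$, bilinearity of $\blincost{t}{\cdot}{\cdot}$ immediately gives
$$\blincost{t}{\alpha_1\du_1+\alpha_2\du_2}{\dv}=\alpha_1\blincost{t}{\du_1}{\dv}+\alpha_2\blincost{t}{\du_2}{\dv}=0$$
for every $\dv\in\U{0}(t_0,t)$, so $\alpha_1\du_1+\alpha_2\du_2$ satisfies (i). Condition (ii) is inherited for free, hence the linear combination lies in $\Sol{t}{}$.

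For the second assertion, I would first observe that $\Sol{t}{(\zero)}=\Sol{t}{}\cap\{\du\that\qq{1}(t,\du)=\zero\}$ is a vector subspace, being the intersection of $\Sol{t}{}$ with the kernel of the linear map $\du\mapsto\qq{1}(t,\du)$ (linearity of this map follows from \eqref{eqn:CS_deg_1} being linear in $\du$). Then, assuming $\Sol{t}{(\vv)}$ is non-empty, I would fix any $\du_0\in\Sol{t}{(\vv)}$ and verify $\Sol{t}{(\vv)}=\du_0+\Sol{t}{(\zero)}$ by checking both inclusions. If $\du\in\Sol{t}{(\vv)}$, then $\qq{1}(t,\du-\du_0)=\vv-\vv=\zero$ by linearity of the state map, and bilinearity yields $\blincost{t}{\du-\du_0}{\dv}=0$ for every $\dv\in\U{0}(t_0,t)$, so $\du-\du_0\in\Sol{t}{(\zero)}$. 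Conversely, for any $\dw\in\Sol{t}{(\zero)}$ the sum $\du_0+\dw$ still reaches $\vv$ and still satisfies condition (i), so it belongs to $\Sol{t}{(\vv)}$.

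The argument is entirely algebraic and I do not anticipate any real obstacle: the whole proposition is a formal consequence of the algebraic criterion of Proposition~\ref{fact:sol_unbounded_bilin} together with linearity of \eqref{eqn:CS_deg_1} in its control variable. The only mild subtlety worth flagging is that if $\Sol{t}{}$ (respectively $\Sol{t}{(\vv)}$) happens to be empty then the corresponding claim is vacuous, so the non-emptiness caveat is harmless.
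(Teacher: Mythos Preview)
Your proposal is correct and follows essentially the same approach as the paper: both reduce everything to the algebraic criterion of Proposition~\ref{fact:sol_unbounded_bilin} and exploit bilinearity of $\blincost{t}{\cdot}{\cdot}$ together with linearity of $\du\mapsto\qq{1}(t,\du)$. Your treatment is slightly more explicit (you check both inclusions for $\Sol{t}{(\vv)}=\du_0+\Sol{t}{(\zero)}$ and obtain $\Sol{t}{(\zero)}$ as an intersection of linear subspaces), whereas the paper derives $\Sol{t}{(\zero)}$ being a vector space by specializing $\vv=\zero$ in the affine computation, but the substance is the same.
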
 

\begin{proof}
By Prop.~\ref{fact:sol_unbounded_bilin}, each solution $\du\in \LL{t_0}{t}$ is characterized by the condition that the map
$$\blincost{t_0,t}{\du}{\cdot}:\U{0}(t_0,t)\lra \R$$
is null. Note that $\blincost{t_0,t}{\du}{\cdot}$ is linear in $\du$. This ends the first part of the proof.
\smallskip

Let now $\du,\du'$ be two elements of $\Sol{t_0,t}(\vv)$. Then, of course $\dv:=\du'-\du$ satisfies $\qq{1}(t,\dv)=\zero$. Further
\begin{align*}
    0=\costmin{t_0,t}{\vv}-\costmin{t_0,t}{\vv}=\cost{t_0,t}{\du'}-\cost{t_0,t}{\du}=&\cost{t_0,t}{\du+\dv}-\cost{t_0,t}{\du}= \\
    &2\blincost{t_0,t}{\du}{\dv}+\cost{t_0,t}{\dv}\ .
\end{align*}
By the results of Prop.~\ref{fact:sol_unbounded_bilin} the bilinear term vanishes, and so $\cost{t_0,t}{\dv}=0=\costmin{t_0,t}{\zero}$, i.e. $\du'-\du\in\Sol{t_0,t}{(\zero)}$. Finally if put  $\vv=\zero$ in the above calculation, we would get that a difference of two elements of $\Sol{t_0,t}{(\zero)}$ lies in $\Sol{t_0,t}{(\zero)}$. We conclude that the latter is a vector space. This ends the proof.
\end{proof}

The space $\Sol{t_0,\tau}{(\zero)}$ has some interesting properties.

        \begin{proposition}
        \label{prop:sol_zero}
        Assume that the minimal 2-jet $\costmin{t_0,\tau}{\vv}$ is finite for every $\vv\in\RR(t_0,\tau)$. Then for every $\du\in\LL{t_0}{t_1}$, and every $\dv\in \Sol{t_0,\tau}{(\zero)}$ we have
        $$\blincost{t_0,\tau}{\du}{\dv}=0\ .$$  
        \end{proposition}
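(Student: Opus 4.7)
The plan is to imitate the variational trick used in the proof of Prop.~\ref{fact:sol_unbounded_bilin}, but with the roles slightly shifted: instead of varying inside $\U{0}(t_0,\tau)$ around a solution, I vary an arbitrary control $\du$ by adding a multiple of $\dv\in\Sol{\tau}{(\zero)}$.

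First I would set $\vv:=\qq{1}(\tau,\du)\in\RR(t_0,\tau)$; by hypothesis $\costmin{\tau}{\vv}$ is finite. Since $\dv\in\Sol{\tau}{(\zero)}$ we have $\qq{1}(\tau,\dv)=\zero$ and $\cost{\tau}{\dv}=\costmin{\tau}{\zero}$, and by Prop.~\ref{fact:Q_zero_is_zero} (applicable because $\costmin{\tau}{\zero}$ is finite) this last value equals $0$. Because the characteristic control system \eqref{eqn:CS_deg_1} is linear in the control, for every $\lambda\in\R$ we get $\qq{1}(\tau,\du+\lambda\dv)=\vv$, so the control $\du+\lambda\dv$ is competing with $\du$ in the infimum defining $\costmin{\tau}{\vv}$.

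Now I would expand the cost using the bilinearity of $\blincost{\tau}{\cdot}{\cdot}$:
\begin{equation*}
\cost{\tau}{\du+\lambda\dv}=\cost{\tau}{\du}+2\lambda\,\blincost{\tau}{\du}{\dv}+\lambda^2\,\cost{\tau}{\dv}=\cost{\tau}{\du}+2\lambda\,\blincost{\tau}{\du}{\dv},
\end{equation*}
the last equality because $\cost{\tau}{\dv}=0$. Since $\cost{\tau}{\du+\lambda\dv}\geq \costmin{\tau}{\vv}$ for every $\lambda\in\R$ and the right-hand side is finite, the affine function $\lambda\mapsto \cost{\tau}{\du}+2\lambda\,\blincost{\tau}{\du}{\dv}$ is bounded below on $\R$. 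This forces $\blincost{\tau}{\du}{\dv}=0$, since otherwise sending $\lambda$ to $-\operatorname{sign}(\blincost{\tau}{\du}{\dv})\cdot\infty$ would drive the expression to $-\infty$, contradicting the infimum bound.

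No step looks like a real obstacle here: the whole argument hinges on the elementary fact that a non-degenerate linear functional on $\R$ is unbounded below, together with the linear structure of \eqref{eqn:CS_deg_1} and the quadratic nature of $\cost{\tau}{\cdot}$. The only point worth double-checking is the vanishing of $\cost{\tau}{\dv}$, which requires invoking Prop.~\ref{fact:Q_zero_is_zero} and hence the finiteness hypothesis on $\costmin{\tau}{\cdot}$ at $\zero$, guaranteed by the standing assumption of the proposition.
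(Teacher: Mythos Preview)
Your proof is correct and follows essentially the same approach as the paper's: both vary $\du$ by $\lambda\dv$, use $\cost{\tau}{\dv}=0$ to reduce the cost to an affine function of $\lambda$, and conclude from the finiteness of $\costmin{\tau}{\vv}$ that the linear coefficient must vanish. Your write-up is in fact slightly more explicit in justifying $\cost{\tau}{\dv}=0$ via Prop.~\ref{fact:Q_zero_is_zero}.
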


\begin{proof}
Take any such $\du$ and $\dv$. Since $\cost{t_0,\tau}{\dv}=\costmin{t_0,\tau}{\zero}=0$, for every $\lambda\in\R$ we have
\begin{align*}
    \cost{t_0,\tau}{\du+\lambda\cdot \dv}=\cost{t_0,\tau}{\du}+2\lambda\blincost{t_0,\tau}{\du}{\dv}+\lambda^2\cost{t_0,\tau}{\dv}=\cost{t_0,\tau}{\du}+2\lambda\blincost{t_0,\tau}{\du}{\dv}
\end{align*}
Note that $\qq{1}(\tau,\du+\lambda\dv)=\qq{1}(\tau,\du)$, as $\qq{1}(\tau,\dv)=0$. If $\blincost{t_0,\tau}{\du}{\dv}\neq0$, then by taking $\lambda$ big or small enough we would find a control steering \eqref{eqn:CS_deg_1} from $\zero$ to $\qq{1}(\tau,\du)$ with arbitrarily small cost. This contradicts the finiteness of $\costmin{t_0,\tau}{\qq{1}(\tau,\du)}$. \smallskip
\end{proof}

\paragraph{Monti's condition}
Prop.~\ref{fact:sol_unbounded_bilin} shows that solutions of the characteristic OCP are related with the kernel of the linear maps $\blincost{t_0,\tau}{\du}{\cdot}:\U{0}(t_0,\tau)\ra \R$. On the other hand, the previous Prop.~\ref{prop:sol_zero} shows that the space $\Sol{t_0,\tau}{(\zero)}$ is contained in that kernel. It turns out that if the space $\Sol{t_0,\tau}{(\zero)}$ is big enough it may guarantee the existence of solutions of the characteristic OCP. To formalize this property let us propose the following definition.

        \begin{definition}
        \label{def:monti_cond}
        Let $q(t)$ with $t\in[t_0,t_1]$ be a sub-Riemannian trajectory and  consider the related characteristic OCP. We say that the trajectory satisfies \emph{Monti's condition} at time $\tau\in[t_0,t_1]$ if the space $\Sol{t_0,\tau}{(\zero)}$ is of finite codimension in $\LL{t_0}{\tau}$ (equivalently: of finite codimension in $\U{0}(t_0,\tau)$, as the latter  is a space of finite codimension in $\LL{t_0}{\tau}$ being the kernel of the linear map $\qq{1}:\LL{t_0}{\tau}\ra\R^{n+1}$).
        \end{definition}

We propose the name Monti's condition as this is one of the assumptions used in \cite{Monti_third_order_2020} to derive the third-order optimality conditions.\footnote{Actually, the assumption of \cite[Thm 1.2]{Monti_third_order_2020} translated to the notation of this paper is that the space of elements 
$$\operatorname{dom}(\mathcal{D}^3F):=\{\dv\in \LL{t_0}{\tau}\that \qq{1}(\tau,\dv)=0\ \text{and $\qq{11}(\tau,\du,\dv)\in \image\qq{1}(\tau,\cdot)$ for every $\du\in \LL{t_0}{t}$} \}$$ is of finite co-dimension in $\U{0}(t_0,t)$. Here, $2 \qq{11}(\tau,\du,\dv):=\qq{2}(\tau,\du+\dv)-\qq{2}(\tau,\du)-\qq{2}(\tau,\dv)$ is the bi-linear form related with the quadratic map $\qq{2}(\tau,\cdot)$. 

Note that, in particular, every element $\dv\in\operatorname{dom}(\mathcal{D}^3F)$ satisfies $\qq{11}(\tau,\dv,\dv)=\qq{2}(t,\dv)\in \image \qq{1}(\tau,\cdot)\subset \ker \psi_0$, where $\psi_0$ is as Prop.~\ref{prop:pontryagin_in_adapted}. Consequently, $\cost{\tau}{\dv}=\<\qq{2}(\tau,\dv),\psi_0>=0$ and thus $\cost{\tau}{\dv}=0=\costmin{\tau}{\zero}$, i.e. $\dv\in\Sol{\tau}{(\zero)}$ whenever $\costmin{\tau}{\zero}$ is finite. Summing up, if $\costmin{\tau}{\zero}$ is finite then
$$\operatorname{dom}(\mathcal{D}^3F)\subset\Sol{\tau}{(\zero)}\subset\LL{t_0}{\tau}\ $$
and thus the condition of Definition~\ref{def:monti_cond} is weaker then the assumption made in \cite{Monti_third_order_2020}.} This assumption is, however, a strong one. Further it may not be satisfied, as is the case in the two examples discussed in Subsection~\ref{ssec:examples}. 

        \begin{lemma}
        \label{lem:monti_gives_solutions}
        Let $q(t)$ with $t\in[t_0,t_1]$ be a sub-Riemannian trajectory. Consider the related characteristic OCP and assume that the minimal 2-jet $\costmin{t_0,\tau}{\vv}$ is finite for every $\vv\in\RR(t_0,\tau)$. If the trajectory satisfies Monti's condition at time $\tau$, then the set $\Sol{t_0,t}{(\vv)}$ is non-empty for every $\vv\in\RR(t_0,\tau)$.
        \end{lemma}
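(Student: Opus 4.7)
The plan is to reduce the problem to a finite-dimensional linear-algebraic question via the quotient by $\Sol{\tau}{(\zero)}$, and then invoke non-degeneracy of the induced bilinear form.

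First, I would pick any $\du_0\in\LL{t_0}{\tau}$ with $\qq{1}(\tau,\du_0)=\vv$ (which exists by the definition of $\RR(t_0,\tau)$). By Prop.~\ref{fact:sol_unbounded_bilin} (condition (ii) of which is guaranteed by the standing assumption that $\costmin{\tau}{\zero}$ is finite, indeed equal to $0$ by Prop.~\ref{fact:Q_zero_is_zero}), it suffices to produce $\dv'\in \U{0}(t_0,\tau)$ such that $\blincost{\tau}{\du_0+\dv'}{\dv}=0$ for every $\dv\in\U{0}(t_0,\tau)$. Equivalently, letting $\phi\in\U{0}(t_0,\tau)^\ast$ be the linear functional $\phi(\dv):=-\blincost{\tau}{\du_0}{\dv}$, I must solve the equation $\blincost{\tau}{\dv'}{\cdot}=\phi$ in $\U{0}(t_0,\tau)$.

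Next, I would pass to the quotient $V:=\U{0}(t_0,\tau)/\Sol{\tau}{(\zero)}$, which is finite-dimensional by Monti's condition. By Prop.~\ref{prop:sol_zero}, $\blincost{\tau}{\cdot}{\cdot}$ vanishes on $\LL{t_0}{\tau}\times\Sol{\tau}{(\zero)}$, so both the bilinear form $\blincost{\tau}{\cdot}{\cdot}$ and the linear functional $\phi$ descend to a well-defined symmetric bilinear form $B:V\times V\to\R$ and a linear functional $\bar\phi\in V^\ast$. The task becomes: find $w\in V$ with $B(w,\cdot)=\bar\phi$ in $V^\ast$.

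The key step is to verify that $B$ is non-degenerate on $V$. Suppose $w=[\dv]\in V$ satisfies $B(w,\cdot)=0$, i.e.\ $\blincost{\tau}{\dv}{\cdot}\equiv 0$ on $\U{0}(t_0,\tau)$. Since $\dv\in\U{0}(t_0,\tau)$ means $\qq{1}(\tau,\dv)=\zero$, and since $\costmin{\tau}{\zero}=0$ is finite, the two hypotheses of Prop.~\ref{fact:sol_unbounded_bilin} are met with $\du:=\dv$ and $\vv:=\zero$; hence $\dv\in\Sol{\tau}{(\zero)}$, so $w=0$ in $V$. Non-degeneracy of $B$ on the finite-dimensional $V$ then upgrades the map $w\mapsto B(w,\cdot)$ to a linear isomorphism $V\xrightarrow{\sim}V^\ast$, providing a unique $w\in V$ with $B(w,\cdot)=\bar\phi$. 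Any lift $\dv'\in\U{0}(t_0,\tau)$ of $w$ yields $\du_0+\dv'\in\Sol{\tau}{(\vv)}$.

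The main conceptual obstacle is the non-degeneracy check, but it is essentially built into the definitions: $\Sol{\tau}{(\zero)}$ is precisely the radical of $\blincost{\tau}{\cdot}{\cdot}$ restricted to $\U{0}(t_0,\tau)$ (once $\costmin{\tau}{\zero}$ is finite), so quotienting by it automatically yields a non-degenerate form. Monti's condition is used exactly to guarantee that this quotient is finite-dimensional, whence the image of $w\mapsto B(w,\cdot)$ exhausts the dual and the affine equation for $\dv'$ admits a solution.
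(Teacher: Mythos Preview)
Your proof is correct and follows essentially the same strategy as the paper: both reduce the problem to a finite-dimensional one by modding out (or splitting off) $\Sol{\tau}{(\zero)}$, using Prop.~\ref{prop:sol_zero} to descend $\blincost{\tau}{\cdot}{\cdot}$ and Prop.~\ref{fact:sol_unbounded_bilin} to identify the radical with $\Sol{\tau}{(\zero)}$. The only cosmetic difference is that the paper chooses a complement $V$ and exploits that $\cost{\tau}{\cdot}$ is actually positive-definite there (since $\costmin{\tau}{\zero}=0$ forces $\cost{\tau}{\dv}\geq 0$ on $\U{0}(t_0,\tau)$, with equality exactly on $\Sol{\tau}{(\zero)}$), building an orthonormal basis and writing the minimizer explicitly, whereas you argue via non-degeneracy on the quotient and the isomorphism $V\to V^\ast$.
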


\begin{proof}
We will show that Monti's condition actually reduces the characteristic OCP to a finite-dimensional optimization problem. 

Consider a control $\du\in\LL{t_0}{t_1}$ satisfying $\qq{1}(\tau,\du)=\vv$, and let us introduce any splitting $\U{0}{(t,\tau)}=V\oplus \Sol{t_0,\tau}{(\zero)}$, where $V$ is finite-dimensional. The quadratic map $\dv\mapsto \cost{t_0,\tau}{\dv}=\blincost{t_0,\tau}{\dv}{\dv}$ is positively-defined on $V$, and so we may find a $\blincost{t_0,\tau}{\cdot}{\cdot}$-orthonormal basis $\{\dv_\alpha\}_{\alpha\in A}$ of $V$. Denote $\lambda_\alpha:=\blincost{t_0,\tau}{\du}{\dv_\alpha}$ and consider $\du'=\du-\sum_{\alpha\in A}\lambda_\alpha\cdot \dv_\alpha$. By construction $\qq{1}(\tau,\du')=\qq{1}(\tau,\du)=\vv$, and it is straightforward to check that $\blincost{t_0,\tau}{\du'}{\dv_\alpha}=0$ for every $\alpha\in A$. Since, by Prop.~\ref{prop:sol_zero}, also $\blincost{t_0,\tau}{\du'}{\dv}=0$ for every $\dv\in \Sol{t_0,\tau}{(\zero)}$, the linear map $\blincost{t_0,\tau}{\du'}{\cdot}$ vanishes on the whole $\U{0}{(t,\tau)}=V\oplus \Sol{t_0,\tau}{(\zero)}$, and thus $\cost{t_0,\tau}{\du'}=\costmin{t_0,\tau}{\vv}$ by Prop.~\ref{fact:sol_unbounded_bilin}.
\end{proof}

\subsection{Solutions of the characteristic OCP in light of the Pontryagin Maximum Principle}
\label{ssec:pmp}

Once the existence of solutions of the characteristic OCP is established (for example by assuming Monti's condition), we may use the Pontryagin Maximum Principle (PMP, in short) \cite{Pontr_Inn_math_theor_opt_proc_1962} to derive necessary conditions for optimality. They will give us a lot of information on the trajectory. When discussing solutions of the characteristic OCP we will always speak about normal or abnormal \emph{extremals of the characteristic OCP}, not to be confused with the \emph{sub-Riemannian extremal} that constitutes the characteristic OCP itself. 

\paragraph{The PMP for the characteristic OCP}
 For the characteristic OCP the \emph{Pontryagin Hamiltonian} $H:\R\times\R^{n+1}\times(\R^{n+1})^\ast\times\R^k\lra \R$ is defined as \begin{align*}
    H(t,(\q{1},\cc{1}),&(\psi,a),\dv)=H(t,\q{1},(\psi,a),\dv):=\\
    &\<\sum_i\dv_i\cdot \Y{i}{1}(t),\psi>+a\sum_i\dv_i\cdot u_i(t)+p_0\<\sum_i \dv_i\cdot \Y{i}{2}(t,\q{1}),\psi_0>\ .
\end{align*}
Here, $(\psi,a)\in (\R^{n+1})^\ast$ is a covector dual to the pair $(\q{1},\cc{1})\in \R^{n+1}$ and $p_0$ is either -1 (in the normal case) or $0$ (in the abnormal case). Actually, we may assume that  the pair $(\psi,a)$ belongs to the dual of the reachable set $\RR(t_0,t_1)$. Note that since $H$ does not depends on $\cc{1}$, the dual object $a$ will always be constant. The following result is immediate.

    \begin{proposition} 
    \label{prop:basic_pmp}
    Let the control $\du\in \LL{t_0}{t_1}$ together with a covector curve $(\psi(t),a(t))$ be an extremal of the the characteristic OCP related with the SR trajectory $q(t)$, corresponding to the control $u\in\LL{t_0}{t_1}$. Then in the abnormal case ($p_0=0$) both $\psi(t)$ and $a(t)$ are constant. In the normal case ($p_0=-1$) $a(t)$ is constant, while $\psi(t)$ is subject to the evolution equation
    \begin{equation}
        \label{eqn:ev_PMP}
        \<b,\dot\psi(t)>=\<\sum_i \du_i(t)\cdot \Y{i}{2}(t,b),\psi_0>
    \end{equation}
    where $b\in \R^n$ is arbitrary. 
    \smallskip
    
    Further, as the Hamiltonian $ H(t,\q{1},(\psi,a),\dv)$ is a linear function of $\dv$, and we have no bounds on the control, the Maximum Principle gives us the following conditions 
    \begin{equation}
        \label{eqn:max_princip}
        \<\Y{i}{1}(t),\psi(t)>+a\cdot u_i(t)+p_0\< \Y{i}{2}(t,\q{1}),\psi_0>=0
    \end{equation}
    for every $i=1,2,\hdots,k$.
    \end{proposition}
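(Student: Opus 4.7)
The claim is essentially a direct unpacking of the Pontryagin Maximum Principle applied to the control system \eqref{eqn:CS_deg_1}--\eqref{eqn:ev_q2} on the state space $\R^{n+1}\ni(\q{1},\cc{1})$, with Lagrangian $L(t,\q{1},\du)=\sum_i \du_i\cdot\<\Y{i}{2}(t,\q{1}),\psi_0>$ (whose integral from $t_0$ to $\tau$ is $\cost{\tau}{\du}$) and with terminal constraint $\qq{1}(\tau,\du)=\vv$. The corresponding Pontryagin Hamiltonian in pre-Hamiltonian form, using a covector $(\psi,a)\in (\R^{n+1})^\ast$ paired with $(\q{1},\cc{1})$ and abnormality multiplier $p_0\in\{-1,0\}$, is precisely the one stated just above the proposition. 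So the plan is to write down the adjoint equations and maximum condition for this $H$, and read off the three claims.

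First, from the Hamiltonian equation for the covector, $\<b,\dot{a}(t)>=-\partial H/\partial \cc{1}$; since $H$ carries no $\cc{1}$ dependence, $a(t)$ is constant in both the normal and abnormal cases. Next, for $\dot\psi(t)$ we compute $-\partial H/\partial \q{1}$. In the abnormal case $p_0=0$ the Hamiltonian depends on $\q{1}$ only through the $p_0$-term, which is absent, so $\dot\psi(t)=0$ and $\psi$ is constant too. In the normal case $p_0=-1$, the $\q{1}$-dependence is linear (since $\Y{i}{2}(t,\cdot)$ is a linear function of $\q{1}$ by Lemma~\ref{lem:adapted}), and differentiating gives $\<b,\dot\psi(t)>=\sum_i \du_i(t)\cdot\<\Y{i}{2}(t,b),\psi_0>$, which is exactly \eqref{eqn:ev_PMP}.

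For the maximum condition, observe that $H(t,\q{1},(\psi,a),\dv)$ is \emph{linear} in $\dv\in\R^k$ and the controls are unbounded (any $\dv\in\R^k$ is admissible almost everywhere). For a linear function of an unbounded variable, the supremum is finite (and equal to $0$) if and only if the linear coefficient vanishes; the PMP requires this maximum to be attained along the extremal. Reading off the coefficient of $\du_i$ in the Hamiltonian yields
\[
\<\Y{i}{1}(t),\psi(t)>+a\cdot u_i(t)+p_0\<\Y{i}{2}(t,\q{1}(t)),\psi_0>=0
\]
for almost every $t$ and every $i=1,\dots,k$, which is \eqref{eqn:max_princip}.

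The only subtlety I foresee is justifying the application of PMP given the unboundedness of the controls and the quadratic (hence non-Tonelli) cost. However, PMP as a necessary condition for a minimizer only requires the extremal control to exist and the variations to be well-defined, and since the state equations are linear in $(\q{1},\cc{1})$ with measurable bounded coefficients along the fixed trajectory, the standard proof (via needle variations or the Hamiltonian formulation of Bressan-Piccoli \cite{Bressan_Piccoli_2004}) goes through without modification. No coercivity of the cost is needed here because we \emph{assume} the extremal $\du$ exists; the proposition merely extracts the necessary structure of such an extremal, and the linearity in the control collapses the usual inequality form of the maximum principle to an equality.
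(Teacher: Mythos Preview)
Your proposal is correct and matches the paper's approach exactly: the paper simply declares the result ``immediate'' from the Pontryagin Maximum Principle applied to the Hamiltonian displayed just before the proposition, and your write-up spells out precisely the adjoint equations $\dot a=-\partial H/\partial \cc{1}=0$, $\dot\psi=-\partial H/\partial \q{1}$, and the vanishing of the linear-in-$\dv$ coefficients that the paper leaves implicit. There is nothing to add or correct.
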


\paragraph{Existence of extremals implies regularity of the SR trajectory} The existence of certain extremals of the PMP for the characteristic OCP allows to deduce regularity of the underlying SR trajectory $q(t)$.

        \begin{lemma} 
        \label{lem:properties_of_extremals}
        Under the assumptions of Prop.~\ref{prop:basic_pmp}
        \begin{itemize}
            \item if $\du$ is an abnormal extremal of the characteristic OCP with $a\neq 0$, the SR trajectory $q(t)$ has to be  normal  and thus $C^\infty$-smooth;
            \item if $\du$ is a normal extremal   of the characteristic OCP with $a\neq 0$, then  the SR trajectory $q(t)$ is  $C^1$,
            \item if $\du$ is a normal extremal   of the characteristic OCP with $a\neq 0$, and the SR trajectory $q(t)$ additionally satisfies Goh conditions, then $q(t)$ is  $C^2$. 
        \end{itemize}
        \end{lemma}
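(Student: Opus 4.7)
The plan is to use the Maximum Principle condition \eqref{eqn:max_princip} as an explicit formula for the SR control $u_i(t)$, and bootstrap its regularity by tracking how $L^\infty$--inputs propagate through \eqref{eqn:ev_PMP} and through the definitions of $\Y{i}{1}$, $\Y{i}{2}$.

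For the first bullet, Proposition \ref{prop:basic_pmp} gives $\psi(t)\equiv\psi$ constant together with $\<\Y{i}{1}(t),\psi>+a\cdot u_i(t)=0$. Writing $\Y{i}{1}(t)=\lin{1}(t)[X_i|_{q(t)}]$ and setting $\widetilde{\varphi}(t):=\lin{1}(t)^\ast\psi$, this rearranges to
\begin{equation*}
u_i(t)=-\tfrac{1}{a}\<X_i|_{q(t)},\widetilde{\varphi}(t)>.
\end{equation*}
The evolution equation for $\lin{1}$ in Lemma \ref{lem:adapted} shows immediately that $\widetilde{\varphi}(t)$ is a Pontryagin covector along $q(t)$, and the displayed formula is exactly the Hamilton equation associated with the normal sub-Riemannian Hamiltonian $\tfrac12\sum_i\<X_i,\cdot>^2$. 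Hence $q(t)$ is a normal SR extremal, and the classical smoothness of normal extremals gives $q\in C^\infty$.

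For the second bullet, \eqref{eqn:max_princip} with $p_0=-1$ reads
\begin{equation*}
u_i(t)=\tfrac{1}{a}\left[\<\Y{i}{2}(t,\q{1}(t)),\psi_0>-\<\Y{i}{1}(t),\psi(t)>\right].
\end{equation*}
Every factor on the right-hand side is ACB: the trajectory $q(t)$ is ACB by \eqref{eqn:trajectory_u}, the adapted frames $\lin{1}(t)$, $\lin{2}(t)$ and the inverse of $\lin{1}(t)$ are ACB by Lemma \ref{lem:adapted}, the curve $\q{1}(t)$ is ACB as a solution of \eqref{eqn:CS_deg_1} driven by $\du\in L^\infty$, and $\psi(t)$ is ACB by \eqref{eqn:ev_PMP}. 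Hence $u$ itself is ACB, in particular continuous, and therefore $\dot q=\sum_i u_iX_i|_q$ is continuous, giving $q\in C^1$.

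For the third bullet I differentiate the preceding expression for $u_i$ once more and isolate the only non-continuous contributions to $\dot u_i$, which come from $\dot\q{1}=\sum_j\du_j\Y{j}{1}$ inside the first summand (using linearity of $\Y{i}{2}$ in its second argument, which is explicit in Lemma \ref{lem:adapted}) and from \eqref{eqn:ev_PMP} with $b=\Y{i}{1}(t)$ inside the second. Their combined coefficient of $\du_j$ equals
\begin{equation*}
\<\Y{i}{2}(t,\Y{j}{1}),\psi_0>-\<\Y{j}{2}(t,\Y{i}{1}),\psi_0>.
\end{equation*}
Plugging in $\Y{i}{2}(t,\Y{j}{1})=\lin{1}(t)[\Der_{q(t)}X_i[X_j]]+\lin{2}(t)[X_i,X_j]$ from Lemma \ref{lem:adapted}, the $\lin{2}$--terms cancel by symmetry of $\lin{2}$, and the difference collapses to $\<\lin{1}(t)[[X_i,X_j]],\psi_0>$, which is the Goh condition expressed in adapted coordinates and hence vanishes by assumption. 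All other terms in $\dot u_i$ depend only on continuous data (bullet two supplies continuity of $u$ and $\dot q$, which in turn upgrades $\dot\lin{1}$ and $\dot\lin{2}$ to continuous through Lemma \ref{lem:adapted}), so $\dot u_i$ is continuous; equivalently $u\in C^1$ and therefore $q\in C^2$.

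The main obstacle is the cancellation step in the third bullet: one must carefully separate continuous from purely $L^\infty$ contributions when differentiating the formula for $u_i$ so that precisely the commutator $\<[X_i,X_j]|_{q(t)},\varphi(t)>$ appears as the sole obstruction, at which point the Goh hypothesis does its job. The first two bullets are direct and involve no subtlety beyond the Lemma \ref{lem:adapted} regularity chain.
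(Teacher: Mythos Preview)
Your proof is correct and follows essentially the same route as the paper: use \eqref{eqn:max_princip} to solve for $u_i$, then bootstrap regularity via Lemma~\ref{lem:adapted}, and for the third bullet isolate the $\du_j$-coefficient to recognize it as $\<\lin{1}(t)[[X_i,X_j]],\psi_0>$, killed by the Goh condition. The only cosmetic difference is that in the first bullet you identify $\widetilde\varphi(t)=\lin{1}(t)^\ast\psi$ as a normal Pontryagin covector and invoke the classical smoothness of normal SR extremals, whereas the paper stays in adapted coordinates and runs the bootstrap $C^r\Rightarrow C^{r+1}$ directly on $u_i(t)=-\tfrac1a\<\Y{i}{1}(t),\psi>$; these are equivalent arguments.
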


\begin{proof}
To prove the first part, let $(\psi,a)\in (\R^{n+1})^\ast$ be the abnormal covector curve of the characteristic OCP (it is constant by Prop.~\ref{prop:basic_pmp}). The Maximum Principle \eqref{eqn:max_princip} gives us in this case
$$\<\Y{i}{1}(t),\psi>+a\cdot u_i(t)=0\quad\text{for $i=1,2,\hdots,k$}.$$
These are precisely the equations of the normal SR trajectory. In particular we get $u_i(t)=-\frac 1a \<\Y{i}{1}(t),\psi>$. Now note that on the left-hand side we have something of class $C^r$ (with $r=0_-,1_-,1,2,\hdots$) while, by the results of Lemma~\ref{lem:adapted}, on the right-hand side something of class $C^{r+1}$. Hence $u_i(t)$ must be $C^\infty$-smooth by the bootstrap argument.\medskip

A similar argument works also for a normal extremal of the characteristic OCP. In that case, by \eqref{eqn:max_princip} we have 
$$u_i(t)=-\frac 1a\left(\<\Y{i}{1}(t),\psi(t)>-\< \Y{i}{2}(t,\q{1}(t)),\psi_0>\right)\ .$$
Now note that, if $u(t)$ was of class $L^\infty$, then, by Lemma~\ref{lem:adapted}, $\Y{i}{1}(t)$ and $\Y{i}{2}(t,\cdot)$ are absolutely continuous with respect to $t$. Further the covector curve $\psi(t)$, as well as the trajectory $\q{1}(t,\du)$ are Lipschitz. Since  $\Y{i}{2}(t,\q{1})$ depends linearly on $\q{1}$, we conclude that $u_i(t)$ is absolutely continuous and, consequently, $q(t)$ is $C^1$. This ends the proof of the second part. \medskip

For the last part we differentiate \eqref{eqn:max_princip} (which we now know is possible almost everywhere) to get
\begin{align*}
    a\dot{u_i}(t)=&-\<\pa_t\Y{i}{1}(t),\psi(t)>-\<\Y{i}{1}(t),\dot\psi(t)>+\< \pa_t\Y{i}{2}(t,\q{1}(t)),\psi_0>+\< \Y{i}{2}(t,\dot{\q{1}}(t)),\psi_0>\overset{\eqref{eqn:ev_PMP},\eqref{eqn:CS_deg_1}}=\\
    &-\<\pa_t\Y{i}{1}(t),\psi(t)>+\< \pa_t\Y{i}{2}(t,\q{1}(t)),\psi_0>\\
    &-\<\sum_j\du_j(t)\cdot \Y{j}{2}(t,\Y{i}{1}(t)),\psi_0>+\< \Y{i}{2}(t,\sum_j\du_j(t)\cdot \Y{j}{1}(t)),\psi_0>=\\
    &-\<\pa_t\Y{i}{1}(t),\psi(t)>+\< \pa_t\Y{i}{2}(t,\q{1}(t)),\psi_0>\\
    &\quad \sum_j \du_j \<\Y{i}{2}(t,\Y{j}{1}(t))-\Y{j}{2}(t,\Y{i}{1}(t)),\psi_0>\overset{\text{Lem.~\ref{lem:adapted}}}=\\
    &-\<\pa_t\Y{i}{1}(t),\psi(t)>+\< \pa_t\Y{i}{2}(t,\q{1}(t)),\psi_0>+\sum_j\du_j \<\lin{1}(t)[X_i,X_j],\psi_0>\ ,
\end{align*}
 By Lemma~\ref{lem:adapted} the $t$-dependence of $\Y{i}{1}(t)$ and $\Y{i}{2}(t,\cdot)$ is $C^1$ (as $u_i(t)$'s are now absolutely continuous), and the term involving the Lie bracket vanishes because of the Goh condition. By repeating our reasoning from the previous point,  $\dot{u_i}(t)$
is absolutely continuous and hence $q(t)$ is actually $C^2$. 
\end{proof}

With a little more effort, combining the information provided by the PMP with our previous results about optimality conditions of degree two, we can prove that actually  solutions of the characteristic OCP are precisely the normal extremals of the PMP. Further, the existence of particular solutions of the characteristic OCP guarantees the $C^2$-regularity of the underlining SR trajectory.

        \begin{lemma}
        \label{lem:properties_of_extremals_2}
        Let $q(t)$ with $t\in [t_0,t_1]$ be a strictly abnormal SR extremal. 
        \begin{enumerate}[(i)]
            \item\label{point:1} Assume that the minimal 2-jet $\costmin{t_0,t}{\zero}$ is finite. Then the control $\du\in \LL{t_0}{t}$ is a solution of the characteristic OCP if and only if it is a normal extremal of the characteristic OCP. 
            \item \label{point:11} Assume that the assertion of Thm~\ref{thm:properties_of_Q} holds. In particular, let $\costmin{t_0,t}{\vv}=\FF(t)[\vv,\vv]$ for a quadratic map $\FF(t):\R^{n+1}\times\R^{n+1}\ra \R$.
            Then if the control $\du\in \LL{t_0}{t}$ is such a solution of the characteristic OCP that  $\FF(t)[(0,1),\qq{1}(t,\du)]\neq 0$, the SR trajectory $q(t)$ is of class $C^2$.
        \end{enumerate}
        \end{lemma}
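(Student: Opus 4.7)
The plan is to treat (i) as a Lagrange-multiplier / integration-by-parts duality, and to obtain (ii) as a direct corollary of (i) combined with Lemma~\ref{lem:properties_of_extremals}.

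For the forward direction of (i), I would start from a solution $\du$. By Prop.~\ref{fact:sol_unbounded_bilin} the linear functional $\blincost{t}{\du}{\cdot}$ vanishes on $\U{0}(t_0,t) = \ker\qq{1}(t,\cdot)$, so it factors through the quotient and yields a covector $(\wh\psi,\wh a) \in (\R^{n+1})^\ast$ with $2\blincost{t}{\du}{\dv} = \<\qq{1}(t,\dv),(\wh\psi,\wh a)>$ for every $\dv$. I would then define $\psi(s)$ on $[t_0,t]$ via the backward PMP adjoint equation $\<b,\dot\psi(s)> = \<\sum_i \du_i(s)\Y{i}{2}(s,b),\psi_0>$ with terminal value $\psi(t) = \wh\psi$, set $a := \wh a$, and apply integration by parts to $\<\psi(s),\q{1}(s,\dv)>$ on $[t_0,t]$ using \eqref{eqn:ev_q1} and the adjoint equation. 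Plugging the result back into the Lagrange identity and matching with the explicit forms of $\blincost{t}{\du}{\dv}$ and of $\<\qq{1}(t,\dv),(\wh\psi,\wh a)>$ should collapse everything to
\[
\int_{t_0}^t \sum_i \dv_i(\tau)\Bigl[\<\Y{i}{1}(\tau),\psi(\tau)> + a\,u_i(\tau) - \<\Y{i}{2}(\tau,\q{1}(\tau,\du)),\psi_0>\Bigr]\dd\tau = 0
\]
valid for every $\dv$. Since $\dv$ is arbitrary the bracketed expression must vanish almost everywhere, which is exactly the PMP maximum condition of Prop.~\ref{prop:basic_pmp} with $p_0 = -1$, so $\du$ is a normal extremal. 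The converse direction runs the same calculation in reverse: a normal extremal furnishes $2\blincost{t}{\du}{\dv} = \<\qq{1}(t,\dv),(\psi(t),a)>$, so $\blincost{t}{\du}{\cdot}$ vanishes on $\U{0}(t_0,t)$, and together with the hypothesis that $\costmin{t}{\zero}$ is finite, Prop.~\ref{fact:sol_unbounded_bilin} delivers $\du$ as a solution.

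For part (ii), the key step is to recognise the constant $a$ produced in (i) as precisely the quantity in the hypothesis. Since $\du$ is a solution and $\costmin{t}{\cdot}$ is quadratic by Thm~\ref{thm:optimality_coinditions_adapted}, the envelope identity
\[
2\blincost{t}{\du}{\dv} = \left.\tfrac{\dd}{\dd\lambda}\right|_{\lambda=0}\!\cost{t}{\du+\lambda\dv} = \left.\tfrac{\dd}{\dd\lambda}\right|_{\lambda=0}\!\costmin{t}{\qq{1}(t,\du)+\lambda\qq{1}(t,\dv)} = 2\FF(t)[\qq{1}(t,\du),\qq{1}(t,\dv)]
\]
holds for every $\dv$ (the middle equality because the non-negative smooth function $\lambda \mapsto \cost{t}{\du+\lambda\dv}-\costmin{t}{\qq{1}(t,\du)+\lambda\qq{1}(t,\dv)}$ attains its minimum at $\lambda=0$). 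Extracting the $\cc{1}(t,\dv)$-coefficient in $2\FF(t)[\qq{1}(t,\du),\qq{1}(t,\dv)]$ via the decomposition $\FF(t) = \Ff(t) + \zeta(t)\otimes(\cdot) + a_2(t)(\cdot)^2$ and comparing with the $\cc{1}(t,\dv)$-coefficient $a$ on the other side of the identity from (i) yields $a = \<\q{1}(t,\du),\zeta(t)> + 2a_2(t)\cc{1}(t,\du)$, which is non-zero by hypothesis. Hence $\du$ is a normal extremal of the characteristic OCP with $a \neq 0$; combined with the Goh condition for $q$ (derived in the remark following the proof of Thm~\ref{thm:optimality_coinditions_adapted}), the third bullet of Lemma~\ref{lem:properties_of_extremals} delivers $q \in C^2$.

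The main obstacle I expect is the careful integration by parts in (i): I need to verify that the backward adjoint equation admits a well-defined absolutely continuous solution (which follows from Lemma~\ref{lem:adapted}, ensuring sufficient regularity of $\Y{i}{2}(s,\cdot)$ in $s$) and that the boundary term $\<\psi(t),\q{1}(t,\dv)>$ correctly absorbs the $\q{1}$-component of the Lagrange-multiplier pairing, leaving only the $\cc{1}$-component to be matched by the constant $a$. A subsidiary concern is that the Lagrange identity determines $(\wh\psi,\wh a)$ only modulo the annihilator of $\RR(t_0,t)$; however, strict abnormality places $(0,1) \in \RR(t_0,t)$ for $t$ near $t_1$, so the value $a$ is unambiguously determined where (ii) is applied.
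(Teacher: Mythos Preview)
Your proposal is correct, and for part (ii) it is essentially the paper's argument (the paper obtains the same identity $2\FF(t)[\qq{1}(t,\du),\vv]=\<\vv,(\psi(t),a)>$ via an $\eps$-expansion of $\cost{t}{\du+\eps\dv}$ and then reads off the $\cc{1}$-coefficient, just as you do).

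For the forward implication in (i), however, your route genuinely differs from the paper's. The paper invokes the Pontryagin Maximum Principle as a black box: any solution of the characteristic OCP must be a normal or an abnormal PMP extremal, and the abnormal alternative is then ruled out using the strict abnormality of $q(t)$ (via Lemma~\ref{lem:properties_of_extremals} when $a\neq0$, and by showing that $a=0$ forces the covector to annihilate the whole reachable set). You instead construct the normal covector by hand: factoring $\blincost{t}{\du}{\cdot}$ through $\LL{t_0}{t}/\U{0}(t_0,t)\simeq\RR(t_0,t)$ to obtain $(\wh\psi,\wh a)$, then running the adjoint equation \eqref{eqn:ev_PMP} backward and integrating by parts to recover the maximum condition \eqref{eqn:max_princip}. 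This is essentially a direct, self-contained proof of the PMP tailored to this linear--quadratic problem. What it buys you is that you avoid the general PMP entirely and, notably, you never need strict abnormality for this implication (it enters only in your final remark about the well-definedness of $a$ for part (ii)). What the paper's approach buys is brevity, at the cost of importing the PMP and handling the abnormal case separately.

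Your integration-by-parts computation checks out: differentiating $\<\psi(s),\q{1}(s,\dv)>$, using \eqref{eqn:ev_q1} and the adjoint equation, and subtracting from the Lagrange identity $2\blincost{t}{\du}{\dv}=\<\qq{1}(t,\dv),(\wh\psi,\wh a)>$ indeed cancels the $\du$-terms and leaves exactly the integrand you display. The regularity concern you flag is handled by Lemma~\ref{lem:adapted} as you say, and the ambiguity of $(\wh\psi,\wh a)$ modulo the annihilator of $\RR(t_0,t)$ is harmless for (i) and is resolved for (ii) precisely because both sides of your envelope identity are linear functionals on $\RR(t_0,t)$, so the $\cc{1}$-coefficient comparison is meaningful whenever $(0,1)\in\RR(t_0,t)$; the paper's own extraction of $a$ implicitly relies on the same point.
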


\begin{proof}
If $\du$ is a solution of the characteristic OCP it must be either a normal, or an abnormal extremal of the PMP. The abnormal case can be easily excluded. Indeed, let $\du$ be an abnormal extremal corresponding to a covector $(\psi,a)\in(\R^{n+1})^\ast$. By the results of previous Lemma~\ref{lem:properties_of_extremals}, if $a\neq 0$, then $q(t)$ must be a normal SR extremal, which is impossible since it is assumed to be strictly abnormal.\smallskip

In the case $a=0$, by \eqref{eqn:max_princip} the covector $\psi$ annihilates all control fields $\Y{i}{1}(t)$ and, in consequence, the whole reachable set of \eqref{eqn:CS_deg_1}, so $\psi=0$ and $a=0$ which is impossible, as the covector defined by the PMP has to be nowhere-vanishing. \medskip

Now we can concentrate on the case where $\du$ is a normal extremal of the characteristic OCP. Let $(\psi(t),a)\in(\R^{n+1})^\ast$ be the related covector curve. For any other control  $\dv\in\LL{t_0}{t_1}$ we would like to calculate the bi-linear form $\blincost{t_0,t}{\du}{\dv}$. Note that
\begin{align*}
\<\sum_i\dv_i(t)\cdot \Y{i}{2}(t,\q{1}(t,\du)),\psi_0>\overset{\eqref{eqn:max_princip}}=&\<\sum_i\dv_i(t)\cdot \Y{i}{1}(t),\psi(t)>+a\cdot \sum_i\dv_i(t)\cdot u_i(t)\overset{\eqref{eqn:CS_deg_1}}=\\
&\<\dotq{1}(t,\dv),\psi(t)>+a\cdot\dot{\cc{1}}(t,\dv)\ .\intertext{On the other hand,}
\<\sum_i\du_i(t)\cdot \Y{i}{2}(t,\q{1}(t,\dv)),\psi_0>\overset{\eqref{eqn:ev_PMP}}=&\<\q{1}(t,\dv),\dot{\psi}(t)>
\end{align*}
By adding the two above equalities and integrating over $[t_0,t]$ we get
\begin{equation}
\label{eqn:B_from_PMP}
\blincost{t_0,t}{\du}{\dv}=\<\q{1}(t,\dv),\psi(t)>+a\cdot \cc{1}(t,\dv)\ .
\end{equation} 
Our first conclusion is that whenever $\dv\in\U{0}(t_0,t)$ then $\q{1}(t,\dv)=0$ and $\cc{1}(t,\dv)=0$, and thus $\blincost{t_0,t}{\du}{\dv}=0$. By Prop.~\ref{fact:sol_unbounded_bilin} this (together with the finiteness of $\costmin{t_0,t}{\zero}$) implies that $\du$ is optimal, proving point \eqref{point:1}. \medskip

To prove \eqref{point:11}, take any $\dv\in\LL{t_0}{t_1}$,  $\eps\in\R$, and denote $\qq{1}(t,\dv)=\vv=(v,c)\in\R^n\times\R$. We have
\begin{align*}\cost{t_0,t}{\du+\eps\cdot\dv}=&\cost{t_0,t}{\du}+\eps\blincost{t_0,t}{\du}{\dv}+\eps^2\cost{t_0,t}{\dv}=\\
&\costmin{t_0,t}{\qq{1}(t,\du)}+\eps\<\vv,(\psi(t),a)>+\eps^2\cost{t_0,t}{\dv}\geq\\
&\costmin{t_0,t}{\qq{1}(t,\du)}+\eps\<\vv,(\psi(t),a)>+\eps^2\costmin{t_0,t}{\vv} \ ,\end{align*}
and thus 
\begin{equation}
    \label{eqn:ineq_Q}
\costmin{t_0,t}{\qq{1}(t,\du)+\eps\vv}\geq \costmin{t_0,t}{\qq{1}(t,\du)}+\eps\<\vv,(\psi(t),a)>+\eps^2\costmin{t_0,t}{\vv}\ .
\end{equation}
On the other hand, by Thm~\ref{thm:properties_of_Q} we know that the minimal 2-jet $\costmin{t_0,t}{\cdot}$ is a quadratic function and hence
\begin{align*}\costmin{t_0,t}{\qq{1}(t,\du)+\eps\vv}=&\FF[\qq{1}(t,\du)+\eps\vv,\qq{1}(t,\du)+\eps\vv]=\\
&\FF[\qq{1}(t,\du),\qq{1}(t,\du)]+2\eps \FF[\vv,\qq{1}(t,\du)]+\eps^2\FF[\vv,\vv]=\\
&\costmin{t_0,t}{\qq{1}(t,\du)}+2\eps \FF[\vv,\qq{1}(t,\du)]+\eps^2\costmin{t_0,t}{\vv}\overset{\eqref{eqn:ineq_Q}}\geq\\
&\costmin{t_0,t}{\qq{1}(t,\du)}+\eps\<\vv,(\psi(t),a)>+\eps^2\costmin{t_0,t}{\vv},
\intertext{implying}
&2\eps\FF[\vv,\qq{1}(t,\du)]\geq \eps\<\vv,(\psi(t),a)>\ .
\end{align*}
As $\eps$ and $\vv=(v,c)$ were arbitrary, the above inequality may hold if and only if 
\begin{align*}\<v,\psi(t)>+c\cdot a=&\<\vv,(\psi(t),a)>=2 \FF[\vv,\qq{1}(t,\du)]\ .
\end{align*}
In particular, taking $\vv=(0,1)$ we get $a=2\FF(t)[(0,1),\qq{1}(t,\du)]$. By Lemma~\ref{lem:properties_of_extremals} if the latter number is different than zero and the Goh conditions are satisfied, the trajectory $q(t)$ is of class $C^2$. Note however, that if the assertion of Theorem~\ref{thm:properties_of_Q} holds, then also the assertion of Theorem~\ref{thm:optimal_deg_2}  is true (with a single piece). To show  it suffices to repeat the reasoning from the last paragraph of Subsection~\ref{ssec:shape_min_2_jet}. But then Goh conditions are true by the results of Subsection~\ref{ssec:discussion}. This ends the proof.
\end{proof}

\paragraph{Regularity under Monit's condition} We end our consideration by stating the following consequences of Monti's condition.
\begin{lemma}
\label{lem:monti}
Let $q(t)$, with $t\in [t_0,t_1]$, be a minimizing strictly abnormal SR geodesic. Assume additionally that it satisfies Monti's condition at time $t_1$ (in the sense of Definition~\ref{def:monti_cond}).

Consider the division of $q(t)$ into a finite number of pieces provided by Thm~\ref{thm:equations_of_motion}. Then on each of these pieces $q(t)$ is either a 2-abnormal extremal in the sense of Definition~\ref{def:2_normal_abnormal}, or of class $C^2$. 
\end{lemma}
\begin{proof}
Consider the division of  $q(t)$ into pieces $[\tau_i,\tau_{i+1}]$ provided by Theorem~\ref{thm:equations_of_motion}. Take any $[t_0',t_1']\subset(\tau_i,\tau_{i+1})$. Let $\PP(t)$ be the related quadratic map and  $a_2(t)\in \R$ its $\R\times\R$-component. Now we have two possibilities, either the piece of $q(t)$, with $t\in[t_0,t_1']$, is a 2-abnormal extremal, or $a_2(t)\neq 0$ for every $t$ (the 2-normal case). Let us assume that we have the latter situation.  
\smallskip

Observe that if the Monti's condition holds at $t_1$ then also it holds at $t_1'$ on the shorter trajectory as all we have to do is to take the finite-codimensional extension $\Sol{t_0,t_1}(\zero)\subset\LL{t_0}{t_1}$ and intersect it with $\LL{t_0'}{t_1'}$.
In particular, by Lemma~\ref{lem:monti_gives_solutions}, the characteristic OCP on $[t_0',t_1']$ has solutions for every possible end-point condition. Let $\du\in\LL{t_0'}{t_1'}$ be such a solution with $\qq{1}(t_1',\du)=(0,1)$. Recall that by the argument form the last paragraph of Subsection~\ref{ssec:shape_min_2_jet} the quadratic maps $\FF(t)$ and $\PP(t)$ have the same $\R\times\R$-component, hence  $$\FF(t_1')[(0,1),\qq{1}(t_1',\du)]=\FF(t_1')[(0,1),(0,1)]=\PP[(0,1),(0,1)]=a_2(t)\neq 0\ .$$
Now Lemma~\ref{lem:properties_of_extremals_2} point \eqref{point:11} guarantees that $q(t)$ is of class $C^2$ on $[t_0',t_1']$. This ends the proof. 
\end{proof}


\section{Further study}\label{sec:conclusion}

The presented material provokes a few interesting questions:
\begin{itemize}
    \item   Is it possible to improve the time-regularity of the minimal 2-jet $\costmin{t_0,\tau}{\vv}$ in Thm.~\ref{thm:properties_of_Q}? Answering this question would be an important step in solving the regularity problem of SR geodesics. Namely, if the $\tau$-regularity of $\costmin{t_0,\tau}{\vv}$ could be improved to, say being of class $C^k$, then in the 2-normal case equation \eqref{eqn:xi_u_1} would guarantee that the control $u(t)$ is also of class $C^k$, and thus the trajectory $q(t)$ be of class $C^{k+1}$ on each piece from a finite collection.
    \smallskip
    
    We \textbf{hypothesize} that it is possible to prove that if the control $u(t)$ is of class $C^r$ (where $r=0_-,1_-,1,2,\hdots$) then the dependence $\tau\mapsto \costmin{t_0,\tau}{\vv}$ is of class $C^{r+1}$. This, by the standard bootstrap argument, will imply that every 2-normal trajectory $q(t)$ is piece-wise smooth (with a finite number of pieces), thus smooth by the results of \cite{no_corners_2016}.
   
    \item For 2-abnormal trajectories, as equation \eqref{eqn:xi_u_1} does not determines the control $u(t)$, the question of regularity remains open regardless of the regularity properties of the minimal 2-jet $\costmin{t_0,\tau}{\vv}$. It seems that for such curves, one needs to understand the geometry of third-order (and higher) order expansion of the extended end-point map. Some research in this direction was initiated in \cite{Monti_third_order_2020}, yet a fully satisfactory theory is yet to be developed.
    
	\item Sub-Riemannian normal extremals are known to be locally minimizing. It is natural to ask whether the same is true for 2-normal extremals. We \textbf{hypothesize} that the answer to this question is positive, and could be provided by extending the methods from \cite{MJ_WR_nsre}.

	\item Our proof is based on Agrachev-Sarychev Index Lemma (Lem.~\ref{lem:AS_index}). It seems to us that is is possible to (slightly) modify the proof of this result to guarantee the following result.\smallskip

    \textbf{Hypothesis}.	
	Let $q(t)$, with $t\in[t_0,t_1]$ be a minimizing strictly abnormal SR geodesic of co-rank $r$. Then there exists a covector $\varphi_0:\T_{q(t_1)}M\ra \R$, and a symmetric 2-form $\PP:\T_{q(t_1)}(M\times\R)\times\T_{q(t_1)}(M\times\R)\ra \R$  such that:
	\begin{itemize}
	    \item $\<\bbb{1}(t_1,\du),\varphi_0>=0$ for every $\du\in \LL{t_0}{t_1}$
	    \item the negative index of the 2-form $$\LL{t_0}{t_1}\ni\du\longmapsto \<\bbb{2}(t_1,\du),\varphi_0>-\PP[\bbb{1}(t_1,\du),\bbb{1}(t_1,\du)]$$
	    is at most $r$. 
	\end{itemize}
	In the standard version the bi-linear term $\PP$ is not present, as the 2-form is restricted to a subspace $\ker \Der_u\END{t_0,t_1}{q_0}\subset \LL{t_0}{t_1}$. 
	\smallskip
	
	It should be also possible to prove that necessarily $\PP[(0,1),(0,1)]\geq 0$, and so $a_2(t)$ described by Thm~\ref{thm:optimal_deg_2} is always non-negative. 
	\smallskip
	
	Such an improvement on Agrachev-Sarychev Lemma will allow to simplify the cutting procedure that we used in order to guarantee the finiteness of all minimal 2-jets $\costmin{t_0',\tau}{\vv}$, and thus reduce the estimate on the maximal number of divisions in Thm~\ref{thm:optimal_deg_2}. 
	
\end{itemize}
We plan to address these questions in a future series of publications. 

\section*{Acknowledgements}
I would like to thank Witold Respondek for bringing my attention to the topic of sub-Riemannian geodesics, our initial work on this topic during my stay in Rouen in 2016, and his vast knowledge of control theory which he eagerly shared with me.  
My special thanks goes to Piotr Mormul for his words of encouragement and support, often expressed in his haiku-like emails.

\bibliographystyle{amsalpha}
\bibliography{bibl}

\end{document}